\setlist{nosep}
\setlist[enumerate,1]{label=(\arabic*)}
\newtheorem{theorem}{Theorem}
\newtheorem{prop}[theorem]{Proposition}
\newtheorem{lemma}[theorem]{Lemma}
\newtheorem{coro}[theorem]{Corollary}
\newtheorem{obs}[theorem]{Observation}
\newcounter{claimprefix}
\newtheorem{claim}{Claim}[claimprefix]
\newcommand*{\claimproof}{Proof of the Claim}
\newenvironment{proofcl}[1][\claimproof]{\begin{proof}[#1]
}{\end{proof}}
\theoremstyle{definition}
\newtheorem{definition}{Definition}
\newcommand{\sm}{\setminus}
\newcommand{\claw}{K_{1,3}}
\newcommand{\AAA}{\mathcal A}
\newcommand{\CC}{\mathcal C}
\newcommand{\FF}{\mathcal F}
\newcommand{\HH}{\mathcal H}
\newcommand{\GG}{\mathcal G}
\newcommand{\II}{\mathcal I}
\newcommand{\PP}{\mathcal P}
\newcommand{\RR}{\mathcal R}
\newcommand{\XX}{\mathcal X}
\newcommand{\OO}{\Omega}
\newcommand{\splitatcommas}[1]{%
  \begingroup
  \begingroup\lccode`~=`, \lowercase{\endgroup
    \edef~{\mathchar\the\mathcode`, \penalty0 \noexpand\hspace{0pt plus 1em}}%
  }\mathcode`,="8000 #1%
  \endgroup
}
\newcommand{\vl}{l\kern-0.035cm\char39\kern-0.03cm}
\begin{document}
\title{Forbidden induced pairs for perfectness and $\omega$-colourability of graphs}
\author{Maria Chudnovsky\thanks{Department of Mathematics, Princeton University,
U.S.A.\newline
E-mail: {\tt mchudnov@math.princeton.edu}.}
\and Adam Kabela\thanks{Faculty of Applied Sciences, University of West Bohemia,
Czech Republic. Previous affiliation: Faculty of Informatics, Masaryk University, Czech Republic.\newline
E-mail: {\tt kabela@kma.zcu.cz}.}
\and
Binlong Li\thanks{School of Mathematics and Statistics, Northwestern Polytechnical University, China.\newline
E-mail: {\tt libinlong@mail.nwpu.edu.cn}.}
\and
Petr Vr\'{a}na\thanks{Faculty of Applied Sciences, University of West Bohemia,
Czech Republic.\newline
E-mail: {\tt vranap@kma.zcu.cz}.}
}
\date{}

\maketitle
\begin{abstract}
We characterise the pairs of graphs $\{ X, Y \}$
such that all $\{ X, Y \}$-free graphs
(distinct from $C_5$) are perfect.
Similarly,
we characterise pairs $\{ X, Y \}$ such that all $\{ X, Y \}$-free graphs
(distinct from $C_5$) are $\omega$-colourable
(that is, their chromatic number is equal to their clique number).
More generally,
we show characterizations of pairs $\{ X, Y \}$
for perfectness and $\omega$-colourability
of all connected $\{ X, Y \}$-free graphs which are
of independence at least~$3$,
distinct from an odd cycle,
and of order at least $n_0$,
and similar characterisations subject to each subset of these additional constraints.
(The classes are non-hereditary and the characterisations for perfectness
and $\omega$-colourability are different.)
We build on recent results of Brause et al. on $\{ K_{1,3}, Y \}$-free graphs,
and we use Ramsey's Theorem and the Strong Perfect Graph Theorem as main tools.
We relate the present characterisations to known results on forbidden pairs for
$\chi$-boundedness and deciding $k$-colourability in polynomial time.
\end{abstract}


\section{Introduction}
\label{sIntro}
The study of forbidden induced subgraphs in relation to perfectness of graphs goes back to  
Berge~\cite{B} who conjectured that perfect graphs can be characterised 
by forbidding all induced odd cycles of length at least five and their complements;
this was later proven by
the first author and Robertson, Seymour and Thomas~\cite{CRST}
(and the result is referred to as the Strong Perfect Graph Theorem).
For the sake of brevity, we ought to say that
graph perfectness is one of the classical topics in graph theory
and refer the reader to
a monograph by Ram\'{i}rez-Alfons\'{i}n and Reed~\cite{JR}
and to surveys by Hougardy~\cite{H}
and by Roussel, Rusu and Thuillier~\cite{RRT}.

In the spirit of graph perfectness,
Gy\'arf\'as~\cite{G} initiated the study of hereditary classes of graphs whose 
chromatic number can be bounded by a function of their clique number.
%
This topic (referred as $\chi$-boundedness) is also widely investigated,
in particular for classes of graphs defined by forbidden induced subgraphs.
One of the major open conjectures in this area, stated by Gy\'arf\'as~\cite{G}
and by Sumner~\cite{Sumner}, asserts that
for every forest $F$, the class of all $F$-free graphs is $\chi$-bounded.
We refer the reader to surveys by Schiermeyer and Randerath~\cite{SR}
and by Scott and Seymour~\cite{SandS}
(see also the recent result of Bonamy and Pilipczuk~\cite{BP}).

Similarly, conditions on forbidden induced subgraphs are investigated in relation
to the computational complexity of colourability.
In particular, Kr\'{a}{\vl}, Kratochv\'{i}l, Tuza and Woeginger~\cite{KKTW}
characterised graphs $H$ such that,
given an $H$-free graph and an integer $k$,
the question whether the graph is $k$-colourable can be decided in polynomial time
(namely, for $H$ chosen as an induced subgraph of $K_1 \cup P_3$ or $P_4$,
and for every other graph $H$ the problem is NP-complete),
and they initiated the investigation of forbidden pairs.
More results on the topic can be found surveyed by
Golovach, Johnson, Paulusma and Song~\cite{GJPS}
who also asked for the complete classification of
forbidden pairs in relation to the complexity of colourability.
This question is still open.
For perfect graphs the fact that $k$-colourability 
is decidable in polynomial time 
follows from the study of Gr\"otschel, Lov\'asz and Schrijver~\cite{GLS}.

In relation to the present paper,
we recall that every $P_4$-free graph is perfect by Seinsche~\cite{Seinsche}
(it also follows from the Strong Perfect Graph Theorem).
On the other hand,  
if the chromatic number is bounded by a linear function of the clique number
for the class of graphs defined by a forbidden induced subgraph $H$,
then $H$ is an induced subgraph of $P_4$
(for instance, this follows by combining the result of Erd\H{o}s~\cite{E}
who showed that there are graphs of arbitrarily large girth and chromatic number
and the result Brause, Randerath, Schiermeyer and Vumar~\cite{BRSV}
who showed that there is no linear binding function
for the class of $\{ 3K_1, 2K_2 \}$-free graphs).
On the other hand,  
particular forbidden induced subgraphs (and sets of subgraphs)
are known to give polynomial binding functions,
and there is a number of pairs of forbidden induced subgraphs giving linear binding functions.
The results and open questions
of this type can be found surveyed in~\cite{SR};
in particular, the question of characterising
the pairs of connected forbidden subgraphs
giving that the chromatic number is at most the clique number plus one.
It seems natural to also ask about the improvement of a binding function
when restricting a class by additional constraints.
For instance,
the first author and Seymour~\cite{CS} showed that
the chromatic number of a connected $\claw$-free graph of independence at least $3$
is at most twice its clique number;
whereas $\claw$-free graphs in general
admit a polynomial binding function
(for instance, see~\cite{SR}).
Considering forbidden pairs $\{ \claw, Y \}$,
Brause et al.~\cite{BHKRSV} showed that, depending on the choice of~$Y$,
the class of all connected $\{ \claw, Y \}$-free graphs distinct from an odd cycle 
(and of independence at least $3$)
either consists of perfect graphs
or contains infinitely many graphs which are not $\omega$-colourable
(see Theorem~\ref{alpha3} in Section~\ref{sOne}).
The dichotomic nature of this result is non-trivial
(since the classes are non-hereditary,
perfectness and $\omega$-colourability are different concepts),
and it motivates the general question of forbidden pairs $\{ X, Y \}$,
which is answered by the present study.


As the main results,
we characterise the pairs of forbidden induced subgraphs
in relation to perfectness and $\omega$-colourability 
for classes of graphs restricted by additional constraints.
The basic characterisations concern pairs $\{ X, Y \}$ such that 
every $\{ X, Y \}$-free graph (distinct from $C_5$) is perfect,
and pairs
such that every $\{ X, Y \}$-free graph (distinct from $C_5$) is $\omega$-colourable
(see Theorem~\ref{main} in Section~\ref{sMain}).
More generally,
we present characterisations of pairs $\{ X, Y \}$
giving perfectness ($\omega$-colourability)
of all connected $\{ X, Y \}$-free graphs of independence at least $3$
distinct from an odd cycle,
and analogous characterisations subject to
each subset of the additional constraints
(stated formally in Theorem~\ref{mainNoExceptions}).
Furthermore,
we characterise pairs $\{ X, Y \}$ in relation to
perfectness ($\omega$-colourability) of all but finitely many
$\{ X, Y \}$-free graphs satisfying the constraints
(see Theorem~\ref{mainFiniteExceptions}).
In other words, there is $n_0$ (depending on $\{ X, Y \}$) such that
all considered $\{ X, Y \}$-free graphs on at least $n_0$ vertices
are perfect ($\omega$-colourable).
The present characterisations related to perfectness
are, in fact, different from those related to $\omega$-colourability
due to the consideration of the additional constraints.
(The constraints are discussed in Section~\ref{sOne}.)


We note that the present conditions on forbidden pairs
(without the additional constraints on a class)
were studied in relation to $\chi$-boundedness and to computational complexity of colourability.
In fact, under each of the conditions (or its generalisation), 
the class is known to be $\chi$-bounded 
and the $k$-colourability is decidable in polynomial time
(see the known results surveyed in~\cite{SR} and~\cite[Theorem 21 (ii)]{GJPS},
respectively).
We remark that for some pairs,
the $\chi$-boundedness and the fact that $k$-colourability is decidable in polynomial time
can also be deduced 
using the present results
(for details, see Section~\ref{sOne}).
In this sense, Theorems~\ref{mainNoExceptions} and~\ref{mainFiniteExceptions}
show that these forbidden pairs yield a stronger property.
%


The paper is structured as follows.
In~Section~\ref{sPre}, we recall basic notation.
In~Section~\ref{sMain}, we state the main results on the forbidden pairs
(see Theorems~\ref{main}, \ref{mainNoExceptions} and~\ref{mainFiniteExceptions}).
In addition, we use the fact that a collection of pairs of graphs
can be represented by a graph and we provide figures depicting the collections.
In Section~\ref{sOne},
we discuss the additional constraints and the pairings given by the characterisations,
and we comment on the connections to known results.
The key steps for proving the main results are broken down
in Sections~\ref{sForb}, \ref{sPerf}, \ref{sOmega} and~\ref{sFam}.
In Section~\ref{sForb}, we show several structural lemmas
for classes defined by particular forbidden induced subgraphs.
We use the classical result of Ramsey~\cite{R} (recalled in Theorem~\ref{Ramsey})
and the characterisation of connected $Z_1$-free graphs by Olariu~\cite{O}
(recalled in Lemma~\ref{olariu}).
In Section~\ref{sPerf},
we show a detailed statement on how conditions on forbidden pairs imply perfectness.
Using the structural lemmas shown in the previous section, the perfectness
follows either directly (for most pairs) or with the aid of the Strong Perfect Graph Theorem
(recalled in Theorem~\ref{tSPGT}).
Similarly in Section~\ref{sOmega},
we show how conditions on forbidden pairs imply $\omega$-colourability.
We use the Strong Perfect Graph Theorem and one of the structural lemmas
(shown in Section~\ref{sForb})
and also Ramsey's Theorem.
In Section~\ref{sFam}, we exclude all possible remaining pairs.
In particular, we recall that the pairs containing $\claw$ were resolved in~\cite{BHKRSV},
and we eliminate the remaining cases
with the help of several constructions of families of graphs which are 
not perfect (not $\omega$-colourable)
and structural properties observed in Section~\ref{sForb}.
Finally in Section~\ref{sProving}, we combine the ingredients
(assembled in Sections~\ref{sOne} and \ref{sPerf}, \ref{sOmega} and~\ref{sFam})
and prove the main results.

\section{Notation}
\label{sPre}
In this short section, we recall basic definitions and concepts.
We refer the reader to~\cite{BM} for additional notation
and for a detailed introduction to the concepts of subgraphs and graph colourings.

We recall that a graph $G$ is \emph{$k$-colourable}
if the vertices of $G$ can be coloured with $k$ colours
so that adjacent vertices obtain distinct colours;
and the smallest integer $k$ with this property
is called the \emph{chromatic number} of $G$, and denoted $\chi(G)$.
We let $\omega(G)$ denote the \emph{clique number} of a graph $G$,
that is, the number of vertices of a maximum complete subgraph of $G$.
Clearly, $\chi(G) \geq \omega(G)$ for every graph $G$.
For the sake of simplicity,
we say that a graph $G$ is \emph{$\omega$-colourable} if $\chi(G) = \omega(G)$.
We recall that a graph $G$ is \emph{perfect} if $\chi(G')=\omega(G')$
for every induced subgraph $G'$ of $G$.

Given a family $\HH$ of graphs, we say that a graph $G$ is \emph{$\HH$-free}
if $G$ contains no member of $\HH$ as an induced subgraph
(and we write $\{ H_1, H_2, \dots, H_k \}$-free for a family $\{ H_1, H_2, \dots, H_k \}$,
and $H$-free as a short for $\{ H \}$-free).
We let $K_n, P_n, C_n$ denote the complete graph, the path, the cycle on $n$ vertices, respectively.
We let $K_{n_1, \dots, n_k}$ denote the complete $k$-partite graph
whose partities are of sizes $n_1, \dots, n_k$.
We let $\claw^+$ denote the graph obtained from $\claw$ by subdividing an edge,
and $D$ the graph obtained from $K_4$ by removing an edge,
and $Z_1$ the graph obtained from $K_3$ by adding a pendant edge,
and $Z_2$ the graph obtained from $Z_1$ by subdividing the pendant edge
(see graphs $\claw^+, D, Z_1$ and $Z_2$ depicted in the bottom part of Figure~\ref{figPairsX};
the graphs are commonly referred to as chair, diamond, paw and hammer, respectively).

We let $G \cup H$ denote the disjoint union of graphs $G$ and $H$,
and we write $k G$ for the disjoint union of $k$ copies of $G$.
We say that $G$ is of independence at least $k$ if $G$ contains induced $k K_1$;
and the largest such integer $k$ is called the \emph{independence number} of $G$. 
We let $\overline{G}$ denote the complement of a graph $G$
that is, the graph on the same vertex set as $G$ whose vertices are adjacent if and only if
the corresponding vertices of $G$ are not.
(For instance see graph $\overline{K_3 \cup P_4}$ depicted in Figure~\ref{figPairsX}.)

\section{Main results}
\label{sMain}

We present characterisations of the pairs of forbidden induced subgraphs
in relation to perfectness and $\omega$-colourability of graphs,
see Theorems~\ref{main}, \ref{mainNoExceptions} and~\ref{mainFiniteExceptions}.
The pairs of interest are collected in Definitions~\ref{d1}, \ref{d2} and~\ref{d3},
and the class notation is given in Definition~\ref{dc}.
We invite the reader to consult Figures~\ref{f1},~\ref{figPairsX},~\dots,~\ref{fc}.

\begin{definition}\label{d1}
Consider graphs $3K_1, K_1 \cup P_3, 2K_1 \cup K_2, K_3, Z_1$ and $D$
(defined in Section~\ref{sPre}).
Let $\PP_1$ be the collection of all pairs $\XX$ of graphs
such that at least one of the following conditions is satisfied.
\begin{itemize}
\item
At least one of the graphs of $\XX$ is an induced subgraph of $P_4$.
\item
$\XX$ is either $\{ 3K_1, K_3 \}$ or $\{ 3K_1, Z_1 \}$ or $\{ 3K_1, D \}$.
\item
$\XX$ is either $\{ K_1 \cup P_3, K_3 \}$ or $\{ K_1 \cup P_3, Z_1 \}$ or $\{ K_1 \cup P_3, D \}$.
\item
$\XX$ is either $\{ 2K_1 \cup K_2, K_3 \}$ or $\{ 2K_1 \cup K_2, Z_1 \}$.
\end{itemize}
Furthermore, let $\OO_1$ be the collection consisting of $\{ 2K_1 \cup K_2, D \}$
and all pairs of $\PP_1$.
\end{definition}
The following are the basic characterisations
(see also Figure~\ref{f1}).
\begin{theorem}
\label{main}
Let $\XX$ be a pair of graphs and
$\PP_1$ and $\OO_1$ be the collections described in Definition~\ref{d1}.
Then every $\XX$-free graph (distinct from $C_5$) is perfect
if and only if
$\XX$ belongs to $\PP_1$.
Similarly, every $\XX$-free graph (distinct from $C_5$) is $\omega$-colourable
if and only if
$\XX$ belongs to $\OO_1$.
\end{theorem}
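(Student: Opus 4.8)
The plan is to prove both biconditionals by exploiting the monotonicity of these classes under the induced-subgraph order. Observe first that if $\XX' = \{X', Y'\}$ is obtained from $\XX = \{X, Y\}$ by replacing $X, Y$ with induced supergraphs, then every $\XX$-free graph is $\XX'$-free; hence the property ``all $\XX$-free graphs (distinct from $C_5$) are perfect'' is inherited downwards, that is, the collection of such good pairs is a down-set, and likewise for $\omega$-colourability. Consequently, for the ``if'' directions it suffices to verify the property for a set of maximal good pairs covering $\PP_1$ (respectively $\OO_1$), and for the ``only if'' directions to exhibit a counterexample for each minimal bad pair. I would also use from the outset that perfectness implies $\omega$-colourability and that $\PP_1 \subseteq \OO_1$ with $\OO_1 \setminus \PP_1 = \{\{2K_1 \cup K_2, D\}\}$; thus the $\omega$-colourability statement reduces to the perfectness statement together with a separate treatment of the single pair $\{2K_1 \cup K_2, D\}$, which must be shown to be good for $\omega$-colourability but not for perfectness.

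For the ``if'' direction of perfectness, the pairs with a constituent that is an induced subgraph of $P_4$ are handled at once: such a graph is $P_4$-free, and $P_4$-free graphs are perfect by Seinsche~\cite{Seinsche}, so monotonicity disposes of the first bullet of Definition~\ref{d1}. For the remaining special pairs I would combine three tools. Ramsey's Theorem (Theorem~\ref{Ramsey}) bounds the order of any graph that is simultaneously $3K_1$-free and $K_3$-free (so both its independence and clique numbers are at most two), reducing such classes to finitely many small graphs. Olariu's characterisation of connected $Z_1$-free graphs (Lemma~\ref{olariu}) splits the paw-free pairs into the complete-multipartite case, which is perfect once $3K_1$-freeness caps the part sizes, and the triangle-free case, again bounded by Ramsey. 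In the cases where a direct argument does not close the gap, the Strong Perfect Graph Theorem (Theorem~\ref{tSPGT}) finishes the job: among graphs of order at most five the only odd hole or odd antihole is $C_5$, so every such graph other than $C_5$ is perfect.

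For the ``only if'' direction I would first determine the minimal graphs that are not induced subgraphs of $P_4$; a short check shows these are exactly $3K_1$, $K_3$, $C_4$, $2K_2$ and $C_5$. If neither constituent of $\XX$ is an induced subgraph of $P_4$, then each of $X, Y$ contains one of these five as an induced subgraph, so by monotonicity it suffices to refute the finitely many minimal bad pairs $\{A, B\}$ with $A, B$ drawn from this list, together with the one-step enlargements of the special good pairs (such as replacing $D$ by $K_4$). For each I would supply an explicit $\XX$-free graph distinct from $C_5$ that is not perfect, respectively not $\omega$-colourable: the natural witnesses are the odd antiholes $\overline{C_{2k+1}}$, the odd holes $C_{2k+1}$, and the tailored families constructed in Section~\ref{sFam}, each chosen so as to avoid the two forbidden graphs while remaining imperfect.

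The main obstacle is the necessity direction. The delicate point is to design, for every minimal bad pair, a family that simultaneously omits both forbidden graphs and fails perfectness (or $\omega$-colourability), and then to confirm that the down-sets generated by the verified good pairs are precisely $\PP_1$ and $\OO_1$ --- no more and no fewer. In particular, the borderline pair $\{2K_1 \cup K_2, D\}$ requires a careful structural analysis showing that all such graphs other than $C_5$ are $\omega$-colourable while some is imperfect, which is exactly what separates $\OO_1$ from $\PP_1$. The sufficiency side, by contrast, is comparatively routine once Ramsey's Theorem, Olariu's lemma and the Strong Perfect Graph Theorem are in hand.
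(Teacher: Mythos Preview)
Your overall architecture matches the paper's: monotonicity to reduce to maximal good and minimal bad pairs, Seinsche for the $P_4$ bullet, explicit imperfect families for necessity, and a separate structural analysis of $\{2K_1\cup K_2, D\}$ to split $\OO_1$ from $\PP_1$. The list $\{3K_1, K_3, 2K_2, C_4, C_5\}$ of minimal graphs not embeddable in $P_4$ is correct and is implicitly what drives the case analysis in Section~\ref{sFam}.

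The genuine gap is in your sufficiency argument. Your three tools, as you describe them, only dispose of the $\{3K_1,\cdot\}$ pairs: Ramsey bounds the order of a $\{3K_1,K_3\}$-free graph, and Olariu reduces $\{3K_1,Z_1\}$ to that case. But the \emph{maximal} special pairs of $\PP_1$ are $\{2K_1\cup K_2, Z_1\}$, $\{K_1\cup P_3, Z_1\}$ and $\{K_1\cup P_3, D\}$, and for these the triangle-free branch coming out of Olariu is not bounded by Ramsey (arbitrarily large bipartite graphs are $\{K_1\cup P_3, K_3\}$-free), so your appeal to ``SPGT on graphs of order at most five'' does not reach them; and for $\{K_1\cup P_3, D\}$ Olariu is not even available. (A side point: complete multipartite graphs are perfect unconditionally --- capping part sizes via $3K_1$-freeness is irrelevant.) The paper fills this in as follows (Proposition~\ref{perf}, items (8)--(10)): for the two $Z_1$ pairs, a disconnected $G$ has every component $K_1\cup K_2$-free (respectively $P_3$-free) and is therefore $P_4$-free, while a connected $K_3$-free $G$ is in particular $\claw^+$-free, and a direct argument (Observation~\ref{bip}) shows such a graph, if distinct from an odd cycle, is bipartite; for $\{K_1\cup P_3, D\}$ one passes to the complement, which is $\{2K_1\cup K_2, Z_1\}$-free since $\overline{K_1\cup P_3}=Z_1$ and $\overline{D}=2K_1\cup K_2$, and perfectness is preserved under complementation. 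You will need these (or equivalent) ingredients to close the ``if'' direction.
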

\begin{figure}[h!]
    \centering
    \includegraphics[scale=0.82]{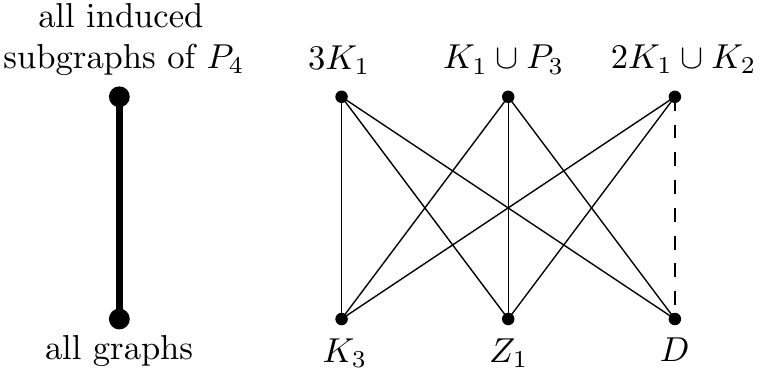}
    \caption{Depicting collections $\PP_1$ and $\OO_1$
    described in Definition~\ref{d1}.
    The graph in the picture represents collection $\OO_1$ of pairs of graphs as follows.
    Every vertex represents a graph and two vertices are adjacent if and only if
    the corresponding graphs form a pair.
    (Each of the bold vertices represents a family of graphs
    and the bold edge represents the collection of all pairs given by the families.)
    The dashed edge represents the pair belonging to $\OO_1$ but not to $\PP_1$.
    (We note that $3K_1, K_1 \cup P_3, 2K_1 \cup K_2$ is the complement of 
	$K_3, Z_1, D$, respectively.)
    }
    \label{f1}
\end{figure}
As the main results, we extend the characterisations of Theorem~\ref{main}
in relation to classes of graphs given by particular sets of additional constraints.
The extended characterisations are presented in Theorems~\ref{mainNoExceptions}
and~\ref{mainFiniteExceptions}.
In particular,
we note that Theorem~\ref{main}
is implied by the combination of items (1) and (6) of Theorem~\ref{mainNoExceptions}.

\begin{definition}\label{dc}
For the sake of brewity, we use the following notations.
\begin{itemize}
\item
$\GG$ is the class of all graphs,
\item
$\GG_{5}$ is the class of all graphs distinct from $C_5$,
\item
$\GG_{o}$ is the class of all graphs distinct from an odd cycle,
\item
$\GG_{c}$ is the class of all connected graphs,
\item
$\GG_{c, 5}$ is the class of all connected graphs distinct from $C_5$,
\item
$\GG_{\alpha}$ is the class of all graphs of independence at least $3$,
\item
$\GG_{o, \alpha}$ is the class of all graphs of independence at least $3$
distinct from an odd cycle,
\item
$\GG_{c, \alpha}$ is the class of all connected graphs of independence at least $3$,
\item
$\GG_{c, o}$ is the class of all connected graphs distinct from an odd cycle,
\item
$\GG_{c, o, \alpha}$ is the class of all connected graphs of independence at least $3$
distinct from an odd cycle.
\end{itemize}
\end{definition}

\begin{figure}[hb!]
    \centering
    \includegraphics[scale=0.53]{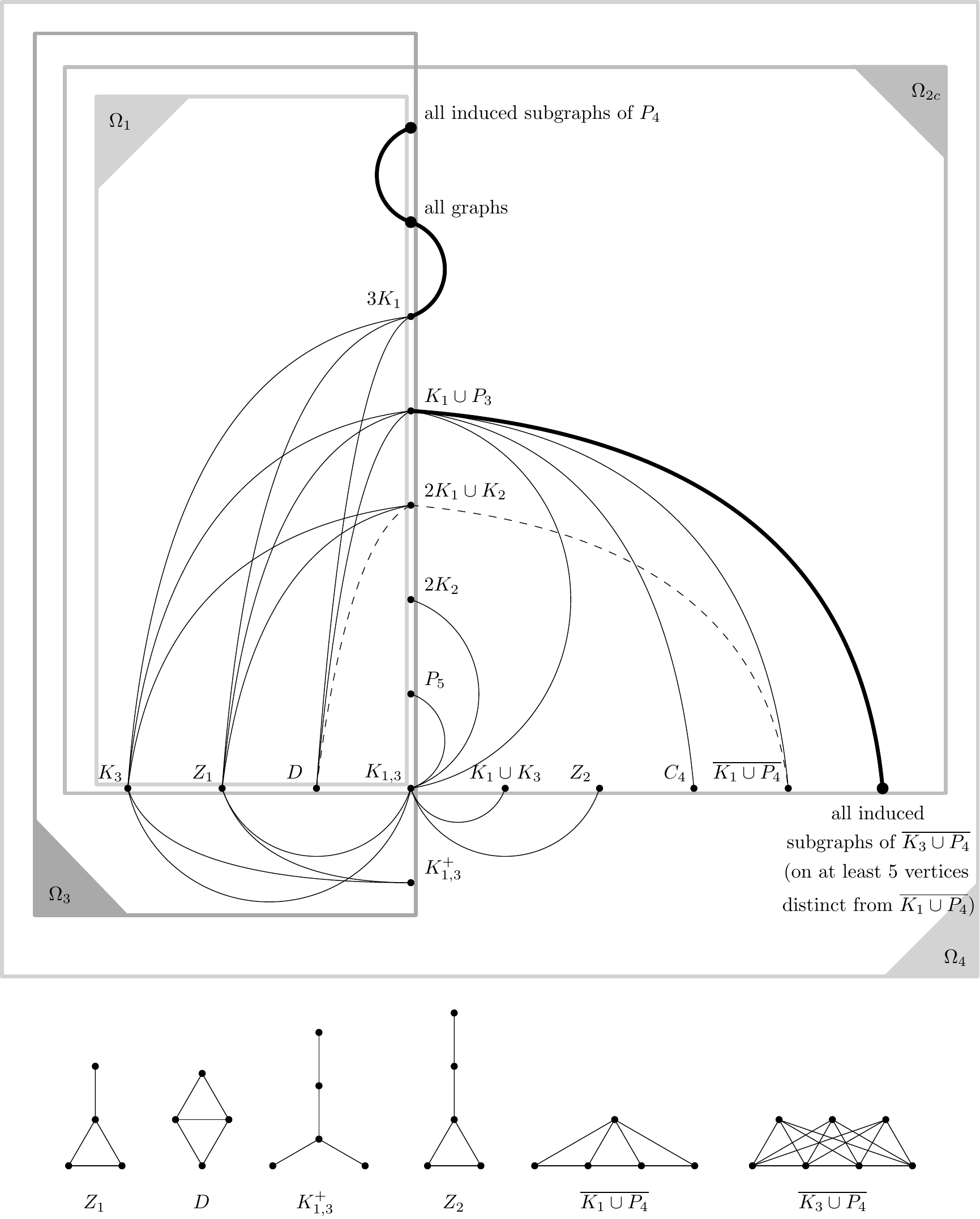}
    \caption{Collections $\OO_1, \OO_{2c}, \OO_3, \OO_4$ and $\PP_1, \PP_{2c}, \PP_3, \PP_4$
    of pairs of graphs described in Definition~\ref{d2} (top part of the figure),
    and graphs
    $Z_1, D, \claw^+, Z_2, \overline{K_1 \cup P_4}$ and $\overline{K_3 \cup P_4}$
    (bottom part).
    In the top part,
    the vertices represent graphs
    and the edges represent pairs of graphs
    (similarly as in Figure~\ref{f1}).
    The grey rectangles represent the belonging of the pairs
    to collections $\OO_1, \OO_{2c}, \OO_3, \OO_4$,
    and also to collections $\PP_1, \PP_{2c}, \PP_3, \PP_4$
    when not considering the pairs depicted by dashed edges.
    }
    \label{figPairsX}
\end{figure}

\begin{definition}\label{d2}
Consider graphs
$\splitatcommas{3K_1, 2K_1 \cup K_2, K_1 \cup P_3, 2K_2, \claw, \claw^+, P_5, K_3, K_1 \cup K_3, D, Z_1, Z_2, \overline{K_1 \cup P_4}}$ and $\overline{K_3 \cup P_4}$
defined in Section~\ref{sPre}
(some of the graphs are also depicted in Figure~\ref{figPairsX}),
and collections $\PP_1$ and $\OO_1$ described in Definition~\ref{d1}.
Let $\II$ be the collection of all pairs of graphs
such that at least one member of the pair is $3K_1$.
Let $\PP_2, \PP_{2c}, \PP_3$ and $\PP_4$ be the collections of pairs of graphs
defined as follows.
\begin{itemize}
\item
$\PP_2$ consists of all pairs $\XX$ such that $\XX$ belongs to $\PP_1$ or to $\II$
or such that one member of $\XX$ is $K_1 \cup P_3$ and the other is
an induced subgraph of $\overline{K_3 \cup P_4}$.
\item
$\PP_{2c}$ consists of all pairs of $\PP_2$ and $\{ \claw, 2K_2 \}$ and $\{ \claw, P_5 \}$.
\item
$\PP_{3}$ consists of all pairs of $\PP_1$ and
$\{ \claw, K_3 \}$ and
$\{ \claw, Z_1 \}$ and
$\{ \claw^+, K_3 \}$ and
$\{ \claw^+, Z_1 \}$.
\item
$\PP_{4}$ consists of all pairs of $\PP_{2c}$ and $\PP_3$ and
$\{ \claw, K_1 \cup K_3 \}$ and
$\{ \claw, Z_2 \}$.
\end{itemize}
Similarly,
let $\OO_2, \OO_{2c}, \OO_3, \OO_4$ be the collections of pairs of graphs
defined as follows.
\begin{itemize}
\item
$\OO_2$ consists of all pairs of $\PP_2$ and
$\{ 2K_1 \cup K_2, D \}$ and 
$\{ 2K_1 \cup K_2, \overline{K_1 \cup P_4} \}$.
\item
$\OO_{2c}$ consists of all pairs of $\PP_{2c}$ and
$\{ 2K_1 \cup K_2, D \}$ and 
$\{ 2K_1 \cup K_2, \overline{K_1 \cup P_4} \}$.
\item
$\OO_3$ consists of all pairs of $\PP_3$ and 
$\{ 2K_1 \cup K_2, D \}$.
\item
$\OO_4$ consists of all pairs of $\PP_4$ and
$\{ 2K_1 \cup K_2, D \}$ and 
$\{ 2K_1 \cup K_2, \overline{K_1 \cup P_4} \}$.
\end{itemize}
\end{definition}
\begin{figure}[h!]
    \centering
    \includegraphics[scale=0.7]{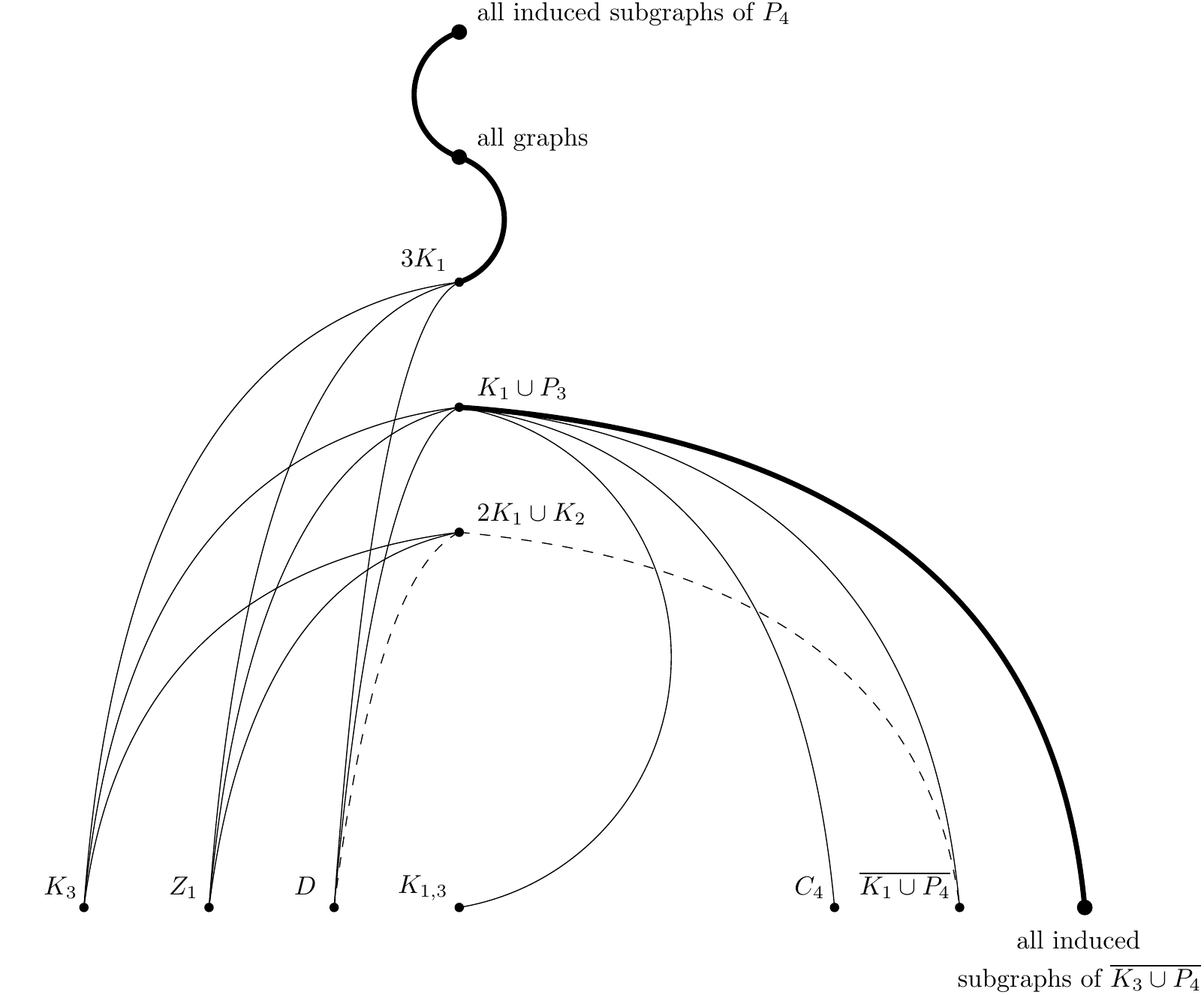}
    \caption{Collections $\OO_2$ and $\PP_2$ described in Definition~\ref{d2}.
    The vertices represent graphs and the edges represent pairs of graphs
    and the dashed edges represent the pairs belonging to $\OO_2$ but not to $\PP_2$
    (similarly as in Figure~\ref{f1}).
    }
    \label{figPairsX2}
\end{figure}

The first of the main results is the following
(see also Figures~\ref{figPairsX}, \ref{figPairsX2} and~\ref{fc}
and the commentary in Section~\ref{sOne}).

\begin{theorem}
\label{mainNoExceptions}
Let $\XX$ be a pair of graphs,
and consider the class notation of Definition~\ref{dc}
and the collections described in Definitions~\ref{d1} and~\ref{d2}.
For each of the following choices of a class,
every $\XX$-free graph of the class is perfect
if and only if $\XX$ belongs to the corresponding collection.
\begin{enumerate}
\item
For each of $\GG_5$, $\GG_{o}$, $\GG_{c,5}$, the collection is $\PP_1$.
\item
For each of $\GG_{\alpha}$, $\GG_{o,\alpha}$, it is $\PP_2$.
\item
For $\GG_{c, \alpha}$, it is $\PP_{2c}$.
\item
For $\GG_{c, o}$, it is $\PP_3$.
\item
For $\GG_{c, o, \alpha}$, it is $\PP_4$.
\end{enumerate}
Similarly,
every $\XX$-free graph of the class is $\omega$-colourable
if and only if $\XX$ belongs to the collection as follows.
\begin{enumerate}
\setcounter{enumi}{5}
\item
For each of $\GG_5$, $\GG_{o}$, $\GG_{c,5}$, it is $\OO_1$.
\item
For each of $\GG_{\alpha}$, $\GG_{o,\alpha}$, it is $\OO_2$.
\item
For $\GG_{c, \alpha}$, it is $\OO_{2c}$.
\item
For $\GG_{c, o}$, it is $\OO_3$.
\item
For $\GG_{c, o, \alpha}$, it is $\OO_4$.
\end{enumerate}
\end{theorem}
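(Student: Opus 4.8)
The plan is to prove each of the ten biconditionals by splitting it into a \emph{sufficiency} half (if $\XX$ lies in the stated collection, then every $\XX$-free graph of the class is perfect, respectively $\omega$-colourable) and a \emph{necessity} half (if $\XX$ lies outside the collection, then the class contains an $\XX$-free counterexample). Two observations organise the whole argument. First, a perfect graph is $\omega$-colourable, so every sufficiency result for a collection $\PP_i$ yields at once the corresponding one for $\OO_i$, and every non-$\omega$-colourable counterexample is automatically imperfect; thus the perfectness and $\omega$-colourability statements are tied together, and the gap between them is carried entirely by the ``dashed'' pairs such as $\{2K_1 \cup K_2, D\}$ and $\{2K_1 \cup K_2, \overline{K_1 \cup P_4}\}$ that separate each $\OO_i$ from $\PP_i$. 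Second, the classes of Definition~\ref{dc} are nested by their defining constraints, so sufficiency propagates from a larger class to every smaller one, while a counterexample found in a smaller class also serves every larger one. Consequently, within each grouped item the sufficiency need only be established for the least constrained class, and the necessity need only supply a single counterexample lying in every class of the group; this is exactly why the collections grow from $\PP_1$ to $\PP_4$ (and $\OO_1$ to $\OO_4$) as the constraints tighten.

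For the sufficiency half I would argue pair by pair. When one member of $\XX$ is an induced subgraph of $P_4$, every $\XX$-free graph is $P_4$-free and hence perfect by Seinsche~\cite{Seinsche}. For the remaining members of $\PP_1$ and $\OO_1$---those formed from $3K_1$, $K_1 \cup P_3$, or $2K_1 \cup K_2$ together with $K_3$, $Z_1$, or $D$---I would invoke the structural lemmas of Section~\ref{sForb}: Ramsey's Theorem bounds the order of a graph that simultaneously forbids a clique-type and an independent-type subgraph, and Olariu's description of $Z_1$-free graphs controls the $Z_1$ cases; the Strong Perfect Graph Theorem then rules out odd holes and odd antiholes and gives perfectness, with $C_5$ appearing as the unique exception that each class definition deletes. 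The pairs that enter only under the stronger constraints---those built from $\claw$, $\claw^+$, $Z_2$, $K_1 \cup K_3$, $\overline{K_1 \cup P_4}$, or $\overline{K_3 \cup P_4}$---are handled by combining these lemmas with the dichotomy of Brause et al.~\cite{BHKRSV} (Theorem~\ref{alpha3}) for the $\claw$-containing cases, and by a direct colouring argument for the $\omega$-colourability of the extra pairs, which yield $\omega$-colourable but not necessarily perfect graphs.

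For the necessity half I would exhibit, for each pair outside the relevant collection, an $\XX$-free graph in the class that is imperfect (for the $\PP_i$ statements) or not $\omega$-colourable (for the $\OO_i$ statements). The difficulty is that the natural small imperfect graph $C_5$ is excluded from most of these classes and has independence only $2$, so the explicit families of Section~\ref{sFam} are needed to produce counterexamples that are at the same time connected, of independence at least $3$, and not odd cycles. Using Ramsey's Theorem together with the structural observations of Section~\ref{sForb} to limit the pairs that can possibly give the property, the necessity direction reduces to finitely many candidate pairs; each is then either shown to lie in the collection or defeated by one of the families, with the $\claw$-containing candidates already settled in~\cite{BHKRSV}. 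The separation of $\PP_i$ from $\OO_i$ is the delicate point here: for a dashed pair one must supply a member of the class that is imperfect yet $\omega$-colourable, whereas for a pair outside $\OO_i$ one must supply a genuinely non-$\omega$-colourable member.

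Finally I would assemble the pieces as in Section~\ref{sProving}, verifying that within each grouped item the least-constrained sufficiency argument and the common counterexample cover every class of the group, so that the two halves meet exactly at the boundary of each collection. I expect the main obstacle to be the necessity half under the strongest constraints, that is, parts~(5) and~(10) for the class $\GG_{c,o,\alpha}$: every counterexample there must be connected, of independence at least $3$, and not an odd cycle, so the imperfect and non-$\omega$-colourable families must be constructed deliberately rather than read off from $C_5$ or a short odd hole, and it is precisely these constraints that enlarge the collections by admitting the additional $\claw$- and $Z_2$-type pairs.
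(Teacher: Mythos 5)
Your high-level architecture does coincide with the paper's: split each item into sufficiency and necessity, let the perfectness results carry the $\omega$-colourability sufficiency except on the dashed pairs, let the non-$\omega$-colourable families carry the perfectness necessity except on the dashed pairs (which need imperfect yet $\omega$-colourable witnesses -- the paper's $E_1,\dots,E_4$), settle the $\claw$-pairs by Theorem~\ref{alpha3}, and settle the rest by the structural lemmas together with the Strong Perfect Graph Theorem. However, one of your two organizing principles is wrong as stated, and following it literally breaks the necessity half of items (1) and (6).

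You claim that within each grouped item ``the necessity need only supply a single counterexample lying in every class of the group.'' In item (1) the group is $\GG_5$, $\GG_o$, $\GG_{c,5}$, and the classes $\GG_o$ and $\GG_{c,5}$ are incomparable: $\GG_o$ contains disconnected graphs absent from $\GG_{c,5}$, while $\GG_{c,5}$ contains the odd cycles $C_7, C_9, \dots$, which are excluded from $\GG_o$. A counterexample serving all three classes would have to lie in their intersection, which is $\GG_{c,o}$, and the collection attached to $\GG_{c,o}$ is $\PP_3$, strictly larger than $\PP_1$. Hence for the four pairs $\{\claw,K_3\}$, $\{\claw,Z_1\}$, $\{\claw^+,K_3\}$, $\{\claw^+,Z_1\}$ -- outside $\PP_1$ but inside $\PP_3$ -- no common counterexample can exist: by the sufficiency of item (4), equivalently by Lemma~\ref{olariu} and Observation~\ref{bip}, every such graph in $\GG_{c,o}$ is perfect (and $\omega$-colourable). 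Run literally, your reduction either stalls on these pairs or forces the wrong conclusion that the collection in item (1) is $\PP_3$ rather than $\PP_1$. What is genuinely needed, and what the paper does, is class-specific witnesses: long odd cycles work for $\GG_5$ and $\GG_{c,5}$, whereas for $\GG_o$ one must use graphs that are not odd cycles, such as disjoint unions of long odd cycles; accordingly, Proposition~\ref{notOmega} never treats the group at once, but derives the statement for $\GG_c$ from its items (2) and (3) and the statement for $\GG_o$ from items (2) and (4), routing through $\GG_{o,\alpha}$. The same repair is needed in item (6). A secondary, smaller imprecision: Ramsey's Theorem does not bound the order of graphs forbidding the $\PP_1$ pairs (for instance, all complete multipartite graphs are $\{2K_1 \cup K_2, Z_1\}$-free), so in the sufficiency half the work for those pairs must be done by Lemma~\ref{olariu} and the Strong Perfect Graph Theorem, as in items (8)--(10) of Proposition~\ref{perf}, not by a Ramsey bound.
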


\begin{figure}[hb!]
    \centering
    \includegraphics[scale=0.58]{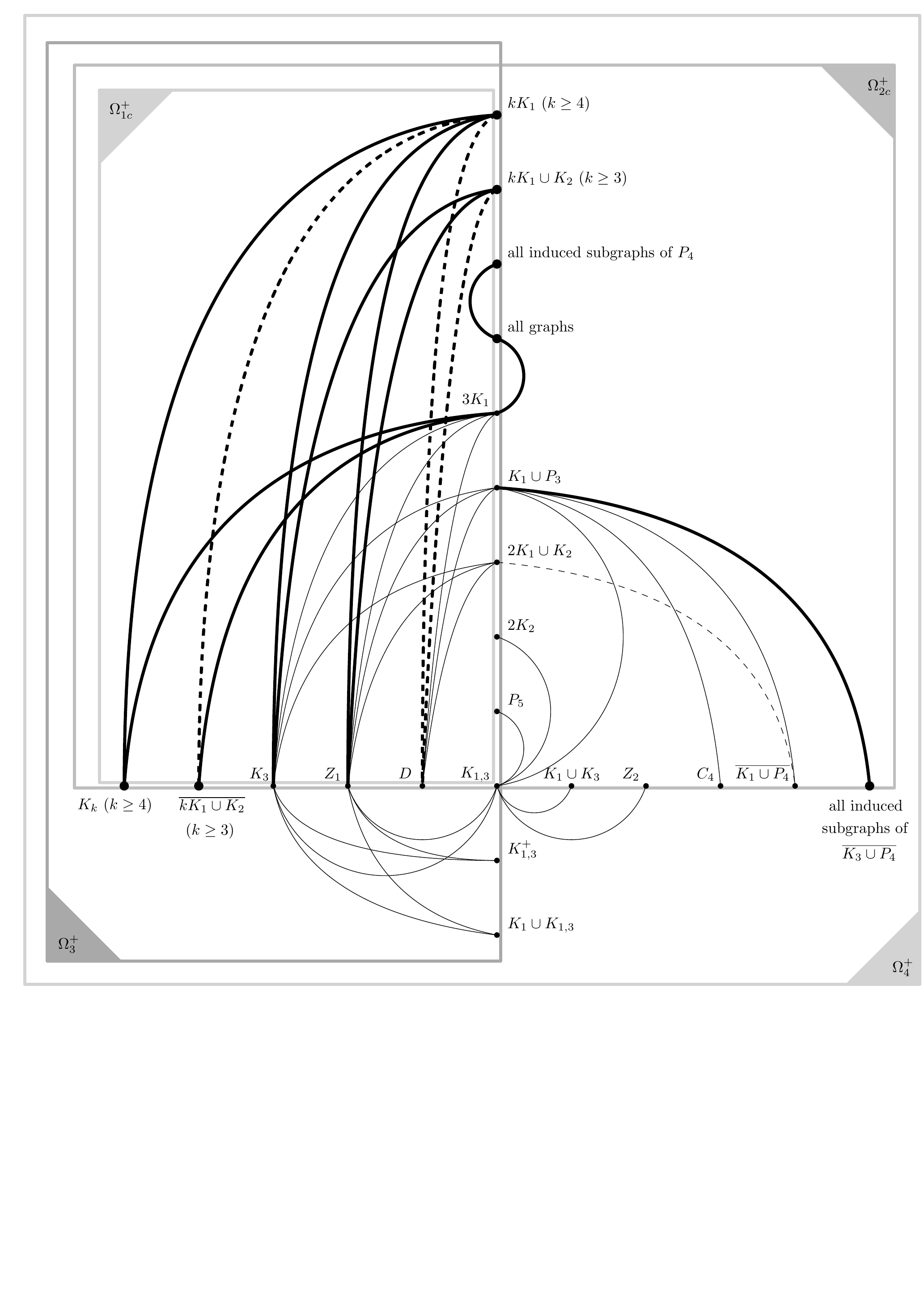}
    \caption{Collections
    $\OO_{1c}^+, \OO_{2c}^+, \OO_3^+, \OO_4^+$ and $\PP_{1c}^+, \PP_{2c}^+, \PP_3^+, \PP_4^+$
    described in Definition~\ref{d3}.
    The edges represent pairs of graphs and
    the grey rectangles represent the belonging to collections
    (similarly as in Figure~\ref{figPairsX}).
    }
    \label{figPairsXplus}
\end{figure}

In the final definition,
we extend the collections as follows.
\begin{definition}\label{d3}
Consider collections $\PP_1, \dots, \PP_4$ and $\PP_{2c}$
described in Definitions~\ref{d1} and~\ref{d2}.
Let $\RR$ be the collection of all pairs of the form
$\{k K_1, K_\ell \}$
where $k \geq 4$ and $\ell \geq 3$.
Let $\AAA_{\PP}, \AAA_1, \AAA_c, \AAA_3$ and $\PP_1^+, \dots, \PP_4^+$ and $\PP_{1c}^+, \PP_{2c}^+$
be the collections of pairs of graphs defined as follows.
\begin{itemize}
\item
$\AAA_{\PP}$ consists of all pairs of $\RR$ and
$\{ 2K_1 \cup K_2, D \}$ and
all pairs of the form $\{ kK_1 \cup K_2, K_3 \}$
where $k \geq 3$.
\item
$\AAA_1$ consists of all pairs
$\{ 3K_1, K_{k+1} \}$ and $\{ 3K_1, \overline{ kK_1 \cup K_2 } \}$ where $k \geq 3$
(note that these pairs belong to $\II$ and thus to $\PP_2, \PP_{2c}, \PP_4$
and $\OO_2, \OO_{2c}, \OO_4$).
\item
$\AAA_c$ consists of
all pairs
$\{ (k+1)K_1, Z_1 \}$ and
$\{ kK_1 \cup K_2, Z_1 \}$
where $k \geq 3$.
\item
$\AAA_3$ consists of 
$\{ K_1 \cup \claw, K_3 \}$ and
$\{ K_1 \cup \claw, Z_1 \}$.
\item
$\PP_1^+$ consists of all pairs of $\PP_1$ and $\AAA_{\PP}$ and $\AAA_1$.
\item
$\PP_{1c}^+$ consists of all pairs of $\PP_1$ and $\AAA_{\PP}$ and $\AAA_1$ and $\AAA_c$.
\item
$\PP_2^+$ consists of all pairs of $\PP_2$ and $\AAA_{\PP}$.
\item
$\PP_{2c}^+$ consists of all pairs of $\PP_{2c}$ and $\AAA_{\PP}$ and $\AAA_c$.
\item
$\PP_3^+$ consists of all pairs of $\PP_3$ and $\AAA_{\PP}$ and $\AAA_1$ and $\AAA_c$ and $\AAA_3$.
\item
$\PP_4^+$ consists of all pairs of $\PP_4$ and $\AAA_{\PP}$ and $\AAA_c$ and $\AAA_3$.
\end{itemize}
Similarly, let $\AAA_{\Omega}$ and $\OO_1^+, \dots, \OO_4^+$ and $\OO_{1c}^+, \OO_{2c}^+$
be the collections of pairs of graphs defined as follows.
\begin{itemize}
\item
$\AAA_{\Omega}$ consists of all pairs of $\AAA_{\PP}$ and $\AAA_c$
(except for the pair $\{ 2K_1 \cup K_2, D \}$ which is not necessary since it belongs
to $\OO_1$ already)
and all pairs of the form
$\{ (k+1) K_1, D \}$ and
$\{ k K_1 \cup K_2, D \}$
where $k \geq 3$
and
$\{ k K_1, \overline{ \ell K_1 \cup K_2 } \}$
where $k \geq 4$ and $\ell \geq 3$.
\item
$\OO_1^+$ consists of all pairs of $\OO_1$ and $\AAA_{\Omega}$ and $\AAA_1$.
(In other words, it consists of all pairs of $\PP_{1c}^+$ and all pairs of the form
$\{ (k+1) K_1, D \}$ and
$\{ k K_1 \cup K_2, D \}$
and
$\{ (k+1) K_1, \overline{ \ell K_1 \cup K_2 } \}$
where $k \geq 3$ and $\ell \geq 3$.)
\item
$\OO_{1c}^+$ is the same as $\OO_1^+$.
\item
$\OO_2^+$ consists of all pairs of $\OO_2$ and $\AAA_{\Omega}$.
\item
$\OO_{2c}^+$ consists of all pairs of $\OO_{2c}$ and $\AAA_{\Omega}$.
\item
$\OO_3^+$ consists of all pairs of $\OO_3$ and $\AAA_{\Omega}$ and $\AAA_1$ and $\AAA_3$.
\item
$\OO_4^+$ consists of all pairs of $\OO_4$ and $\AAA_{\Omega}$ and $\AAA_3$.
\end{itemize}
\end{definition}

\begin{figure}[h!]
    \centering
    \includegraphics[scale=0.6]{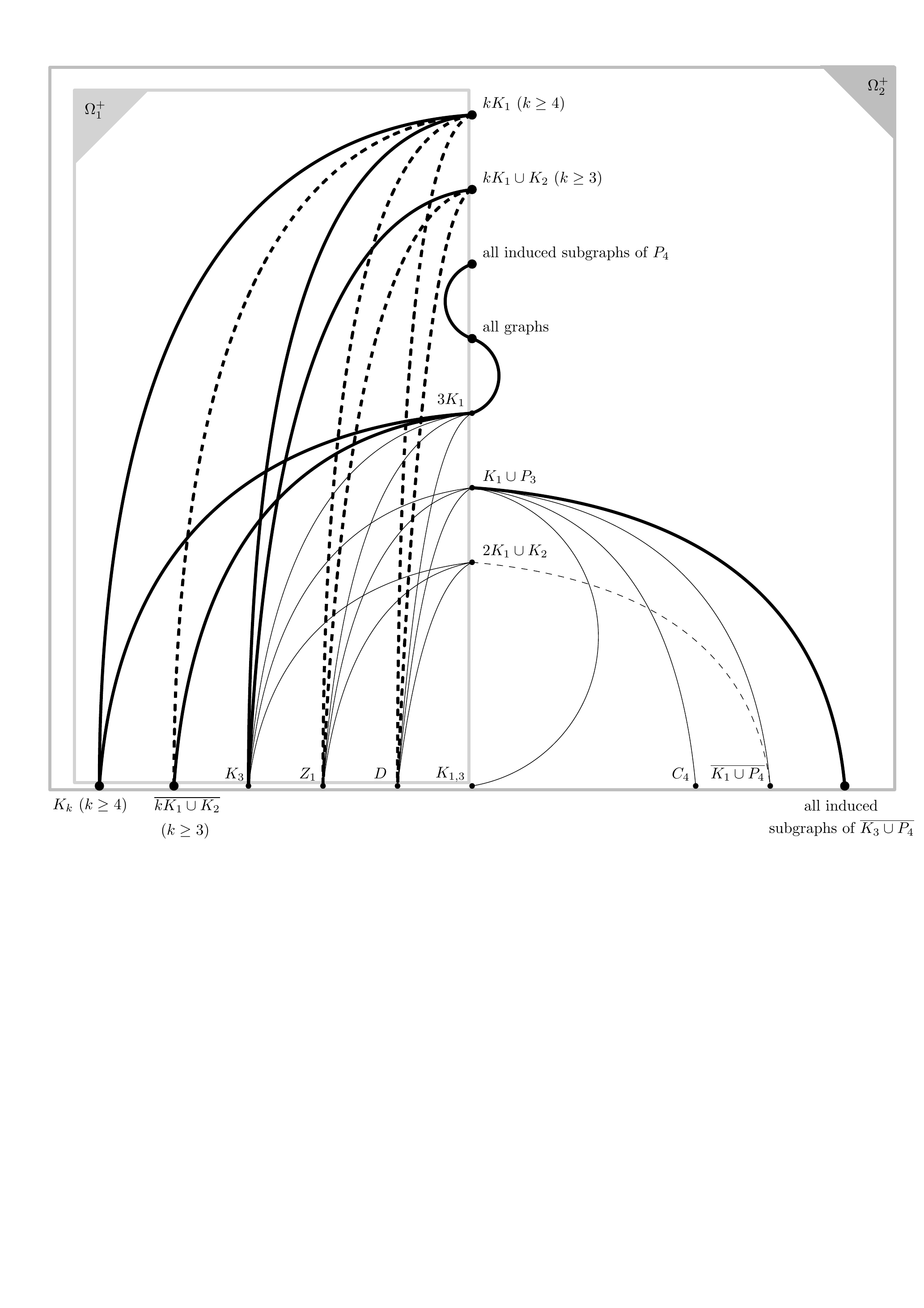}
    \caption{Collections $\OO_1^+, \OO_2^+$ and $\PP_1^+, \PP_2^+$ described in Definition~\ref{d3}.
    }
    \label{figPairsX12plus}
\end{figure}

The second of the main results is the following
(see also Figures~\ref{figPairsXplus}, \ref{figPairsX12plus} and~\ref{fc}).
\begin{theorem}
\label{mainFiniteExceptions}
Let $\XX$ be a pair of graphs,
and consider the class notation of Definition~\ref{dc}
and the collections described in Definition~\ref{d3}.
For each of the following choices of a class,
the set of all non-perfect $\XX$-free graphs in the class is finite
if and only if $\XX$ belongs to the corresponding collection.
\begin{enumerate}
\item
For each of $\GG$, $\GG_{o}$, the collection is $\PP_1^+$.
\item
For $\GG_{c}$, it is $\PP_{1c}^+$.
\item
For each of $\GG_{\alpha}$, $\GG_{o,\alpha}$, it is $\PP_2^+$.
\item
For $\GG_{c, \alpha}$, it is $\PP_{2c}^+$.
\item
For $\GG_{c, o}$, it is $\PP_3^+$.
\item
For $\GG_{c, o, \alpha}$, it is $\PP_4^+$.
\end{enumerate}
Similarly,
the class contains only finitely many $\XX$-free graphs
which are not $\omega$-colourable
if and only if $\XX$ belongs to the collection.
\begin{enumerate}
\setcounter{enumi}{6}
\item
For each of $\GG$, $\GG_{o}$, $\GG_{c}$, it is $\OO_1^+$.
\item
For each of $\GG_{\alpha}$, $\GG_{o,\alpha}$, it is $\OO_2^+$.
\item
For $\GG_{c, \alpha}$, it is $\OO_{2c}^+$.
\item
For $\GG_{c, o}$, it is $\OO_3^+$.
\item
For $\GG_{c, o, \alpha}$, it is $\OO_4^+$.
\end{enumerate}
\end{theorem}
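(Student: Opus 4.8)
```latex
The plan is to prove Theorem~\ref{mainFiniteExceptions} by leveraging
Theorem~\ref{mainNoExceptions} as the ``exact'' (zero-exceptions) skeleton and
then analysing precisely which additional pairs become admissible once we are
allowed to discard a finite set of bad graphs. The overall strategy mirrors the
proof of Theorem~\ref{mainNoExceptions}: for each class of
Definition~\ref{dc} and each collection $\PP_i^+$, $\OO_i^+$ of
Definition~\ref{d3}, I would establish the ``if'' direction (membership
implies only finitely many non-perfect / non-$\omega$-colourable graphs) and
the ``only if'' direction (non-membership implies infinitely many bad graphs)
separately. Since all twenty-two statements share the same two collections
$\AAA_\PP$, $\AAA_\Omega$ (together with the auxiliary $\AAA_1,\AAA_c,\AAA_3$)
as the ``new'' pairs relative to the corresponding collection of
Theorem~\ref{mainNoExceptions}, most of the work is concentrated in
understanding those few extra families; the rest follows by reusing the
zero-exception results.

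For the \emph{if} direction, I would argue as follows. If $\XX$ already lies in
the relevant collection of Theorem~\ref{mainNoExceptions}, then
\emph{every} $\XX$-free graph in the class is perfect (resp.\ $\omega$-colourable),
so the set of exceptions is empty, hence finite. It therefore remains to handle
only the genuinely new pairs, namely those in
$\AAA_\PP, \AAA_\Omega, \AAA_1, \AAA_c, \AAA_3$ and the families
$\{kK_1, K_\ell\}$ of $\RR$, $\{kK_1\cup K_2, K_3\}$, $\{(k+1)K_1, Z_1\}$, and so
on. For each such family the key observation is that forbidding $kK_1$ (or
$kK_1\cup K_2$) bounds the independence number, so by Ramsey's Theorem
(Theorem~\ref{Ramsey}) and the clique bound coming from the second forbidden
graph, every graph in the family has bounded order; thus there are only finitely
many such $\XX$-free graphs \emph{in total}, and in particular only finitely many
non-perfect ones. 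This Ramsey-boundedness argument is exactly why these pairs
qualify for Theorem~\ref{mainFiniteExceptions} but not for
Theorem~\ref{mainNoExceptions}: the class is finite, so its few non-perfect
members are tolerated. I would package this as one clean lemma (``if both a
bounded-independence graph and a bounded-clique graph are forbidden, the class
is finite'') and apply it uniformly.

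For the \emph{only if} direction, I would show that every pair \emph{outside}
the relevant $\PP_i^+$ or $\OO_i^+$ yields infinitely many non-perfect
(resp.\ non-$\omega$-colourable) graphs in the class. Here I would split into
two cases. If $\XX$ is already excluded at the zero-exception level \emph{and}
does not become Ramsey-finite, then the constructions assembled in
Section~\ref{sFam} already produce an infinite family of bad graphs avoiding
$\XX$; I must only verify that these constructions survive the relevant
structural constraints (connectivity, independence $\ge 3$, not an odd cycle)
for the particular class at hand, reusing the structural lemmas of
Section~\ref{sForb} and the observations of Section~\ref{sOne}. The remaining
subtlety is to confirm that the specific pairs placed in $\AAA_\Omega$ but not
$\AAA_\PP$ (such as $\{(k+1)K_1, D\}$ and $\{kK_1\cup K_2, D\}$ and
$\{kK_1, \overline{\ell K_1\cup K_2}\}$) are correctly sorted: these must give
finitely many non-$\omega$-colourable graphs while still admitting infinitely
many imperfect ones, which is what separates the $\OO^+$ columns from the
$\PP^+$ columns and reflects the genuinely different behaviour of perfectness
versus $\omega$-colourability noted in the abstract.

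The hard part will be the careful bookkeeping of the boundary families and the
verification that each sorting of a pair into $\AAA_\PP$, $\AAA_\Omega$,
$\AAA_1$, $\AAA_c$, or $\AAA_3$ is tight for \emph{every} one of the ten classes
simultaneously. In particular, the pairs involving $D$ and the complements
$\overline{kK_1\cup K_2}$ sit exactly on the perfectness/$\omega$-colourability
divide, and for these I expect to need the Strong Perfect Graph Theorem
(Theorem~\ref{tSPGT}) to certify that the relevant infinite family consists of
$\omega$-colourable but imperfect graphs (e.g.\ built from odd antiholes or
controlled blow-ups), so that they count as exceptions for perfectness but not
for $\omega$-colourability. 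Managing the interaction between this divide and the
Ramsey-finiteness threshold, across all the class refinements
$\GG, \GG_c, \GG_\alpha, \GG_{c,\alpha}, \GG_{c,o}, \GG_{c,o,\alpha}$, is the
principal obstacle; once the boundary families are pinned down, the remaining
cases reduce to the uniform Ramsey-finiteness lemma and to direct appeals to
Theorem~\ref{mainNoExceptions} and the Section~\ref{sFam} constructions.
```
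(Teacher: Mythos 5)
There is a genuine gap, and it sits at the heart of your ``if'' direction. Your plan is to handle all the new pairs (those in $\AAA_\PP$, $\AAA_c$, $\AAA_3$, $\AAA_\Omega$, etc.) by a single ``Ramsey-finiteness lemma'': forbidding $kK_1$ or $kK_1\cup K_2$ bounds the independence number, the second forbidden graph bounds the clique number, hence the whole class is finite and the exceptional graphs are trivially finitely many. This is false for almost every pair in question. Forbidding $kK_1\cup K_2$ does \emph{not} bound the independence number: every edgeless graph and every complete bipartite graph is $(K_1\cup K_2)$-free, so for instance the class of $\{kK_1\cup K_2, K_3\}$-free graphs contains all complete bipartite graphs and is infinite. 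Dually, forbidding $Z_1$, $D$, or $\overline{\ell K_1\cup K_2}$ does \emph{not} bound the clique number: every complete graph avoids all of these, so the classes of $\{(k+1)K_1,Z_1\}$-free, $\{kK_1\cup K_2,D\}$-free and $\{3K_1,\overline{kK_1\cup K_2}\}$-free graphs are all infinite as well. The paper itself invokes plain Ramsey finiteness only for the pairs in $\RR$ and for $\{3K_1,K_{k+1}\}$; for every other new pair this argument simply does not apply.

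Consequently your proposal is missing the actual content of the ``if'' direction: one must prove that all \emph{sufficiently large} graphs in these infinite classes are perfect (respectively $\omega$-colourable). In the paper this is exactly what items (4)--(7) of Proposition~\ref{perf} and Proposition~\ref{omega} do, and they rest on real structural work rather than counting: Olariu's characterisation (Lemma~\ref{olariu}) reducing $Z_1$-free components to $K_3$-free or complete multipartite ones, Lemma~\ref{indep5} showing large connected $\{K_1\cup\claw,K_3\}$-free graphs are paths, cycles or complete bipartite, Lemma~\ref{applyRamsey} showing large $\{kK_1\cup K_2,K_3\}$-free graphs are bipartite, and, for the $\omega$-colourability column (pairs such as $\{kK_1\cup K_2,D\}$ and $\{kK_1,\overline{\ell K_1\cup K_2}\}$), Lemma~\ref{l5}, whose proof extracts cliques iteratively and extends colourings via Hall's theorem. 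None of this is recoverable from your uniform lemma, so the ``if'' direction of every item involving $\AAA_\PP\setminus\RR$, $\AAA_c$, $\AAA_3$ or $\AAA_\Omega$ is unproven in your proposal. Your ``only if'' direction (reuse the Section~\ref{sFam} constructions and check they respect the class constraints) is essentially the paper's approach and is fine as a sketch, but by itself it does not rescue the theorem.
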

\begin{figure}[h!]
    \centering
    \includegraphics[scale=0.5]{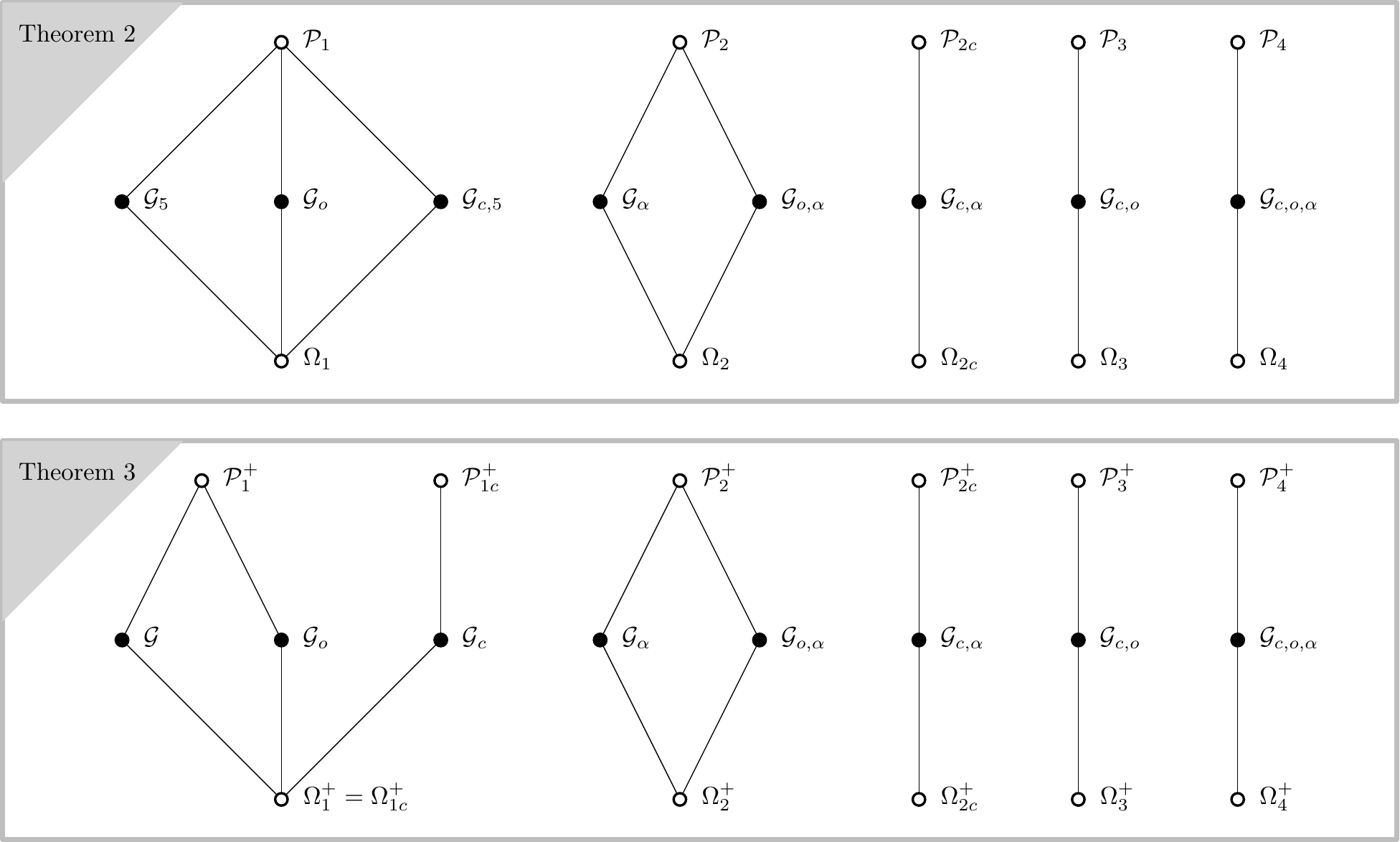}
    \caption{The bipartite graphs depicting the characterisations
    of Theorems~\ref{mainNoExceptions} and~\ref{mainFiniteExceptions}.
	Each black vertex represents a class of graphs,
	each white vertex represents a collection of pairs,
	and each edge represents a characterisation.
    }
    \label{fc}
\end{figure}

\section{Additional commentary on the main results}
\label{sOne}
In this short section,
we comment on the pairings and the additional constraints,
and on the present characterisations in the context of known results.

The basic intuition on the forbidden pairs in Theorem~\ref{main} is as follows.
As recalled in Section~\ref{sIntro}, if $X$ or $Y$ is an induced subgraph of $P_4$,
then all $\{ X, Y \}$-free graphs are perfect.
Assuming that $\{ X, Y \}$ contains no such graph,
we note that if there are only finitely many $\{ X, Y \}$-free graphs
which are not perfect (not $\omega$-colourable), then
there are only finitely many $X$-free odd cycles and 
only finitely many $Y$-free graphs which are complements of an odd cycle
(possibly for $X$ and $Y$ swapped),
and furthermore $C_5$ is $\{ X, Y \}$-free.
In particular, we view the exclusion of $C_5$
as a natural additional constraint for the present study
and we consider classes $\GG_5$ and $\GG_{c,5}$ in Theorem~\ref{mainNoExceptions}
(rather than $\GG$ and $\GG_c$).

Strengthening this additional constraint,
we also consider graphs
of independence at least $3$ and graphs distinct from an odd cycle
(the motivation comes from~\cite{CS,BHKRSV}).
In this regard, the present study is a direct continuation of
the paper of Brause et al.~\cite{BHKRSV}
who showed the following result on forbidden pairs
(and a similar result considering induced subgraphs of $Z_1$ and $P_4$
and no constraint on the independence).

\begin{theorem}\label{alpha3}
Let $Y$ be a graph and $\mathcal{G}$ be the class of all connected $\{ \claw, Y \}$-free
graphs of independence at least $3$ distinct from an odd cycle.
If $Y$ is an induced subgraph of $P_5$ or of $Z_2$,
then all graphs of $\mathcal{G}$ are perfect.
Otherwise, there are infinitely many graphs of $\mathcal{G}$ which are not
$\omega$-colourable.
\end{theorem}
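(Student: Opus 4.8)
The plan is to prove Theorem~\ref{alpha3} by splitting into the positive direction (if $Y$ is an induced subgraph of $P_5$ or $Z_2$, all graphs of $\mathcal{G}$ are perfect) and the negative direction (otherwise infinitely many graphs of $\mathcal{G}$ fail to be $\omega$-colourable). For the positive direction, I would invoke the Strong Perfect Graph Theorem: a graph is perfect if and only if it contains no odd hole and no odd antihole of length at least five. So it suffices to show that a connected $\{\claw, Y\}$-free graph $G$ of independence at least $3$, distinct from an odd cycle, contains no odd hole $C_{2k+1}$ ($k \geq 2$) and no odd antihole $\overline{C_{2k+1}}$ ($k \geq 3$) as an induced subgraph. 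The claw-freeness is the key leverage here: an odd hole $C_{2k+1}$ with $k \geq 2$ has no claw, but the crucial point is how vertices outside the hole must attach. In a claw-free graph, any vertex adjacent to a hole can be adjacent to at most two consecutive vertices of the hole (otherwise three pairwise-nonadjacent hole-neighbours would form a claw with the outside vertex). I would then argue that connectivity together with independence at least $3$ forces the existence of such an attaching vertex, and that the restricted attachment pattern, combined with $Y$-freeness (for $Y \subseteq P_5$ or $Y \subseteq Z_2$), produces a forbidden induced $Y$ or a contradiction with $G$ not being an odd cycle.

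Carrying this out, I would handle the two forbidden configurations separately. \textbf{Odd holes.} Suppose $G$ contains an induced $C_{2k+1}$. If $G$ equals this cycle we are done (excluded by hypothesis), so by connectivity some vertex $v \notin V(C_{2k+1})$ has a neighbour on the cycle. By claw-freeness $v$ sees at most two vertices of the cycle, and these must be adjacent (an edge of the cycle) or a single vertex. Tracing the induced subgraph on $v$ together with a short arc of the cycle, I would exhibit an induced $\claw^+$, $Z_2$, or $P_5$ depending on the attachment, and then use that $Y \subseteq P_5$ or $Y \subseteq Z_2$ to derive that $G$ is not $Y$-free---contradiction. The independence-at-least-$3$ hypothesis is what guarantees we are genuinely in the regime where such attaching vertices and long induced paths coexist. \textbf{Odd antiholes.} For $\overline{C_{2k+1}}$ with $k \geq 3$, I would note this graph already contains an induced $\claw$ (its independence number is $2$ but it has vertices of high degree with nonadjacent neighbours), so a claw-free graph contains no odd antihole of length $\geq 7$ at all, and $\overline{C_5} = C_5$ is excluded since $G$ is not an odd cycle. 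This disposes of antiholes cleanly.

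For the \textbf{negative direction}, when $Y$ is neither an induced subgraph of $P_5$ nor of $Z_2$, I would construct an explicit infinite family. The natural candidates are line graphs of suitable graphs (line graphs are automatically claw-free), or blown-up/expanded odd cycles, chosen so that they are connected, have independence at least $3$, are not odd cycles, are $Y$-free, yet have $\chi > \omega$. The construction must be tailored to exploit the minimality of the excluded $Y$: since $Y \not\subseteq P_5$ and $Y \not\subseteq Z_2$, $Y$ contains one of a small list of obstructions (for example a vertex of degree $\geq 3$ beyond what $Z_2$ allows, a long path beyond $P_5$, a triangle with two pendant structures, or a second component), and for each such obstruction one can build a claw-free, $Y$-free graph with a fixed positive gap between chromatic and clique number, then take disjoint-free or iterated versions to get infinitely many. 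I would most likely appeal directly to the constructions of Brause et al.~\cite{BHKRSV}, since this theorem is quoted from that paper.

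\textbf{The main obstacle} will be the case analysis in the odd-hole argument for the positive direction: one must check, for every way an external vertex (or a connecting path, when the external vertex attaches far from the hole) can attach to a long odd hole under claw-freeness, that a forbidden $P_5$ or $Z_2$ necessarily appears, and the number of attachment patterns---single vertex, an edge, or a longer connecting path reaching the hole---grows with the combinatorics and must be organised carefully so as not to miss a configuration. The bookkeeping is delicate because $P_5$ and $Z_2$ are ``almost'' the extremal obstructions, so the argument is tight and leaves little slack.
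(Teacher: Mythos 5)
First, for context: the paper itself contains no proof of Theorem~\ref{alpha3} --- it is quoted verbatim from Brause et al.~\cite{BHKRSV} as a known result, so there is no internal argument to compare yours against; your proposal has to stand on its own. It does not, because of the odd-antihole step. You claim that $\overline{C_{2k+1}}$ with $k \geq 3$ ``already contains an induced $\claw$'', so that claw-freeness excludes all long antiholes. This is false, and your own parenthetical shows why: $\overline{C_{2k+1}}$ has independence number $2$, and a claw requires three pairwise non-adjacent vertices (its leaves), so a graph of independence number $2$ can never contain a claw. Equivalently, odd antiholes are complements of triangle-free graphs, and such complements are always claw-free. Worse, the antihole cannot be killed by the other forbidden subgraph either: an induced $P_5$ or $Z_2$ inside $\overline{C_7}$ would mean an induced $\overline{P_5}$ (the house) or $\overline{Z_2}$ (which contains $C_4$) inside $C_7$, yet every proper induced subgraph of $C_7$ is a disjoint union of paths. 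So $\overline{C_7}$ is itself a connected, imperfect, $\{ \claw, P_5, Z_2 \}$-free graph; the only hypothesis it violates is independence at least $3$. Consequently the antihole case can only be handled by exploiting that hypothesis --- independence at least $3$ forces vertices outside the antihole, and one must analyse how they attach under claw-freeness and connectivity to reach a contradiction. That analysis is entirely absent from your outline, and it is a substantial piece of the positive direction, not a formality.

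Two further weaknesses: your attachment claim for odd holes (``a vertex adjacent to the hole sees at most two consecutive vertices of it'') is also false as stated --- the $5$-wheel is claw-free, and a vertex adjacent to four consecutive vertices of $C_7$ creates no claw; the correct constraint is only that the neighbourhood on the hole contains no independent triple (together with claws centred on hole vertices), which leaves more attachment types than your sketch acknowledges, though this part is likely repairable. And the negative direction is essentially deferred to the constructions of~\cite{BHKRSV}; since the theorem is being quoted from that very paper this is not circular in a damaging way, but it means your text supplies a self-contained proof of neither direction.
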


In a sense, the dichotomic nature of Theorem~\ref{alpha3} says that
`the perfect-and-only partners' for $\claw$
are $P_5$ and $Z_2$ and their induced subgraphs.
With Theorems~\ref{mainNoExceptions} and~\ref{mainFiniteExceptions} on hand,
we can see that this dichotomy also works the other way around, that is,
$P_5$ and $Z_2$ (and also $2K_2$ and $K_1 \cup K_3$) are partnered with~$\claw$
(and $P_4$).
Similarly, 
$K_1 \cup P_3$ is partnered with $\overline{K_3 \cup P_4}$ (and its induced subgraphs),
and $\overline{K_3 \cup P_4}$
(and each of its induced subgraphs on at least $5$ vertices, and $C_4$)
is partnered with $K_1 \cup P_3$
(some details on induced subgraphs of $\overline{K_3 \cup P_4}$
can be found in Observations~\ref{cK3P4on5} and~\ref{cK3P4}).
%

In addition, we recall that a graph is perfect if and only if its complement is perfect
by the result of Lov\'{a}sz~\cite{L}
(it also follows from the Strong Perfect Graph Theorem).
In particular,
we note that there is certain symmetry about collection $\PP_1$.
Specifically, a pair $\{ X, Y \}$ belongs to $\PP_1$
if and only if $\{ \overline{X}, \overline{Y} \}$ does
(see Figure~\ref{f1}).
Also, there is a similar symmetry about $\PP_1^+$.
Furthermore, we can readily obtain the characterisation of the forbidden pairs
in relation to perfectness of graphs of clique number at least $3$
by taking the collection of all pairs which are complementary
to the pairs of $\PP_2$ (and similarly for $\PP_2^+$).
For completeness, we note that the collection $\II$ 
arises as a formal consequence of the additional constraint,
and similarly for collection $\RR$.

As remarked in Section~\ref{sIntro}, the present conditions on forbidden pairs
were studied in relation to $\chi$-boundedness and deciding $k$-colourability in polynomial time,
and for some pairs these properties can also be deduced using the present results.
In particular, we note that applying the `if part' of item (10) of Theorem~\ref{mainFiniteExceptions}
to connected components of a graph yields the following corollary.
For every pair $\XX$ of $\OO_3^+$,
there exists a constant $c$ such that every $\XX$-free graph is
$(\omega + c)$-colourable.
Similarly, a combination of item (6) of Theorem~\ref{mainFiniteExceptions}
and the results of Kr\'{a}{\vl} et al.~\cite{KKTW}
and Gr\"otschel et al.~\cite{GLS} gives the following.
For every pair $\XX$ of $\PP_4^+$,
the $k$-colourablity of an $\XX$-free graph can be decided in polynomial time.
We note that, for each of these corollaries, a more general result is known 
(see~\cite{SR} and~\cite[Theorem 21 (ii)]{GJPS}).
We only mention the corollaries to put the present study in context
and to note that some forbidden pairs
(known from the investigation of $\chi$-boundedness and computational complexity of $k$-colourability),
in fact,
imply a stronger property given by the hypotheses and conclusions of
Theorems~\ref{mainNoExceptions} and~\ref{mainFiniteExceptions}.
We conclude the section with a comment on
conditions giving that the chromatic number is at most the clique number plus one
(results of this type can be found in~\cite{SR}).
Considering connected graphs (of independence at least $3$),
we note that such conditions follow from
the `if part' of items (9), (10) of Theorem~\ref{mainNoExceptions}
and items (10), (11) of Theorem~\ref{mainFiniteExceptions}.

\section{Structure given by forbidden subgraphs}
\label{sForb}
In this section, we investigate structural properties of graphs
given by particular forbidden subgraphs,
and we assemble several auxiliary statements.
In particular, we outline the structure of $\{ X, K_3 \}$-free and
$\{ X, Z_1 \}$-free graphs for several choices of $X$, 
and we characterise $\{ 2K_1 \cup K_2, \overline{K_1 \cup P_4} \}$-free
graphs containing induced $C_5$.
We start the exposition by recalling the classical result of Ramsey~\cite{R}.

\begin{theorem}[Ramsey's Theorem]\label{Ramsey}
Let $k$ and $\ell$ be positive integers.
There is an integer $r$ such that every graph
(on at least $r$ vertices)
contains induced $kK_1$ or~$K_\ell$.
\end{theorem} 

We let $R(k, \ell)$ denote the \emph{Ramsey number} corresponding to the pair $\{ k, \ell \}$,
that is,
the smallest integer $r$ satisfying Theorem~\ref{Ramsey}.
We also recall the following result of Olariu~\cite{O}.

\begin{lemma}[Olariu's Characterisation]\label{olariu}
Every connected $Z_1$-free graph is $K_3$-free or complete multipartite.
\end{lemma}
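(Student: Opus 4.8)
The plan is to use the standard reformulation of the conclusion: a graph is complete multipartite if and only if it contains no induced $K_1 \cup K_2$, equivalently its complement has no induced $P_3$ and so is a disjoint union of cliques. Since $Z_1$-freeness means $G$ has no induced paw, I would first dispose of the trivial alternative: if $G$ is $K_3$-free we are done, so the substance is to show that a \emph{connected paw-free graph containing a triangle has no induced $K_1 \cup K_2$}, and hence is complete multipartite.

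The engine of the argument is a local observation about how a vertex meets a triangle. First I would record the elementary fact that in any paw-free graph, no vertex $w$ outside a triangle $T=\{a,b,c\}$ can be adjacent to \emph{exactly one} vertex of $T$: the set $\{a,b,c,w\}$ would then induce a triangle with a single pendant edge, i.e.\ an induced $Z_1$. The second, and I expect harder, step is to rule out a vertex adjacent to \emph{none} of $T$, which is where connectivity enters. Given a putative vertex $w_0$ with $N(w_0)\cap T=\emptyset$, I would take a shortest path from $w_0$ to $T$ and let $w_i$ be the first vertex along it having a neighbour in $T$; by the first step $w_i$ is adjacent to at least two vertices of $T$, say $a,b$, while its predecessor $w_{i-1}$ has no neighbour in $T$ by minimality. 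Then $\{w_i,a,b\}$ is a triangle and $w_{i-1}$ is adjacent to exactly one of its vertices, namely $w_i$, producing an induced paw — a contradiction. Combining the two steps, every vertex outside $T$ is adjacent to exactly two or three vertices of $T$; in particular every vertex of $G$ lies in some triangle.

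With these facts in hand, I would finish by contradiction. Suppose $G$ contains an induced $K_1 \cup K_2$ with edge $uw$ and isolated vertex $v$ non-adjacent to both $u$ and $w$. Since $v$ lies in a triangle $\{v,a',b'\}$, and since $u,w \notin \{v,a',b'\}$ (each is non-adjacent to $v$), the connectivity fact applied to this triangle forces $u$ and $w$ each to be adjacent to at least two of $\{v,a',b'\}$; as neither is adjacent to $v$, we get $u\sim a'$, $u\sim b'$, $w\sim a'$, $w\sim b'$. Now $\{a',u,w\}$ is a triangle (using $uw\in E$), and $v$ is adjacent to exactly one of its vertices, namely $a'$, giving an induced paw and the desired contradiction. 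This shows $G$ is $(K_1\cup K_2)$-free, hence complete multipartite.

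The main obstacle is the connectivity step excluding a vertex disjoint from a triangle; the ``exactly one'' case is purely local, but eliminating the ``exactly zero'' case genuinely requires propagating the triangle structure along a shortest path and invoking paw-freeness at an intermediate triangle. Once that uniform statement (``every vertex off a triangle sees at least two of its vertices, so every vertex lies in a triangle'') is established, the final contradiction is a short configuration check, with the only subtlety being the judicious choice of a triangle passing through the isolated vertex $v$ rather than through the original triangle.
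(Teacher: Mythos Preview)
Your argument is correct. The reformulation of ``complete multipartite'' as ``no induced $K_1\cup K_2$'' is standard, the local step (no vertex meets a triangle in exactly one point) and the shortest-path step (no vertex meets a triangle in zero points) are both sound, and the final configuration check is clean: the triangle $\{a',u,w\}$ together with $v$ does give an induced paw.

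Note, however, that the paper does not supply a proof of this lemma at all; it is quoted as Olariu's result from~\cite{O} and used as a black box. So there is nothing to compare against in the paper itself. Your proof is essentially the classical one and is self-contained, which is a small bonus if one wants the exposition to avoid an external citation for such a short argument.
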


With the structural similarity of $K_3$-free and $Z_1$-free graphs on hand
(given by Lemma~\ref{olariu}),
we show several statements.

\begin{obs}
\label{bip}
Every connected $\{ \claw^+, K_3 \}$-free graph
(distinct from an odd cycle) is bipartite. 
\end{obs}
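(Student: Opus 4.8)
The plan is to argue by contradiction. Suppose $G$ is a connected $\{\claw^+, K_3\}$-free graph that is neither bipartite nor an odd cycle, and I will exhibit a forbidden induced $\claw^+$. Since $G$ is not bipartite it contains an odd cycle, and since $G$ is $K_3$-free a shortest odd cycle $C = v_1 v_2 \cdots v_{2k+1} v_1$ has length $2k+1 \ge 5$ and is chordless (any chord would split off a strictly shorter odd cycle), hence induced. As $G$ is connected and distinct from $C$, there is a vertex $u \notin V(C)$ having a neighbour on $C$ (take the first vertex off $C$ along a shortest path from $C$ to any outside vertex); after relabelling, say $u \sim v_1$. Throughout, indices are read modulo $2k+1$, and since $G$ is $K_3$-free, whenever $u \sim v_i$ we automatically get $u \not\sim v_{i-1}$ and $u \not\sim v_{i+1}$.

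The heart of the argument is the \emph{propagation claim}: if $u \sim v_i$, then $u \sim v_{i+2}$. To see this, look at $v_i$ with its three neighbours $v_{i-1}, v_{i+1}, u$; these are pairwise non-adjacent (the two cycle-vertices are at distance $2$, and $u$ is non-adjacent to $v_{i\pm1}$ as noted), so $\{v_i; v_{i-1}, v_{i+1}, u\}$ induces a claw. I then extend along the arm $v_{i+1}$ to its cycle-neighbour $v_{i+2}$. The five vertices $v_{i-1}, v_i, v_{i+1}, v_{i+2}, u$ are distinct (four consecutive indices modulo $2k+1 \ge 5$, together with $u \notin V(C)$), and $v_{i+2}$ is non-adjacent to $v_i$ and to $v_{i-1}$ (their indices differ by $2$ and $3$ respectively, neither congruent to $\pm 1 \bmod (2k+1)$ when $2k+1 \ge 5$). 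Consequently, if $v_{i+2}$ were also non-adjacent to $u$, these five vertices would induce a $\claw^+$ with centre $v_i$ and subdivided arm $v_{i+1} v_{i+2}$, contradicting $\claw^+$-freeness. Hence $u \sim v_{i+2}$, proving the claim.

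Starting from $u \sim v_1$ and iterating the claim gives $u \sim v_1, v_3, v_5, \dots$; since $\gcd(2, 2k+1) = 1$, stepping by $2$ visits every residue modulo $2k+1$, so $u$ is adjacent to \emph{all} vertices of $C$. In particular $u \sim v_1$ and $u \sim v_2$ while $v_1 \sim v_2$, producing a triangle and contradicting $K_3$-freeness. This contradiction shows no such $u$ exists, so $G = C$ is an odd cycle, contrary to the hypothesis; therefore $G$ is bipartite.

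I expect the only delicate point to be the bookkeeping in the propagation step, namely confirming that the five chosen vertices are genuinely distinct and that the required pairs are non-adjacent — all of which reduce to the elementary observation that on a cycle of length at least $5$ two vertices are adjacent only when their indices differ by $\pm 1$. Once the single step $u \sim v_i \Rightarrow u \sim v_{i+2}$ is in place, the conclusion is forced purely by the oddness of $C$, so no separate treatment of $C_5$ versus longer cycles is needed.
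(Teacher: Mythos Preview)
Your proof is correct and follows essentially the same approach as the paper: take a shortest (hence induced) odd cycle $C$, pick an outside vertex $u$ with a neighbour on $C$, and use $K_3$-freeness together with $\claw^+$-freeness to derive a contradiction from the pattern of adjacencies of $u$ to $C$. The paper phrases the final step as a counting observation (since $u$ hits at most $\tfrac{|C|-1}{2}$ vertices of $C$, some neighbour $v_i$ has two consecutive non-neighbours on one side, yielding an induced $\claw^+$), whereas you package the same local configuration as a propagation rule $u\sim v_i \Rightarrow u\sim v_{i+2}$ and push the contradiction to a triangle; these are the same argument in different clothing.
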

\begin{proof}
For the sake of a contradiction, we suppose that
there is a set of vertices $C$ inducing an odd cycle in $G$. 
Clearly, $G$ contains a vertex, say $x$,
not belonging to $C$ but adjacent to a vertex of $C$
(since $G$ is connected and distinct from an odd cycle).
Furthermore, two adjacent vertices of $C$ cannot both belong to $N(x)$
(since $G$ is $K_3$-free),
and therefore at most $\frac{|C|-1}{2}$ vertices belong to $C \cap N(x)$
(since $|C|$ is odd).
Consequently, we note that $G$ contains $\claw^+$ as an induced subgraph, a contradiction.
\end{proof}

\begin{lemma}\label{indep5}
Let $G$ be a connected $\{ K_1 \cup \claw, K_3 \}$-free graph.
If $G$ is of independence at least~$5$, then 
$G$ is either a path or a cycle or a complete bipartite graph.
In particular, there is an integer $n$ so that if $G$ has at least $n$
vertices, then it satisfies the condition.
For instance, we can choose $n = R(5, 3)$.
\end{lemma}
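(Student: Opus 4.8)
The plan is to analyse a connected $\{K_1 \cup \claw, K_3\}$-free graph $G$ of independence at least $5$ by first exploiting its $K_3$-freeness via Olariu's characterisation, and then using the forbidden $K_1 \cup \claw$ together with the large independence number to pin down the structure. Since $G$ is connected and $K_3$-free, it is in particular $Z_1$-free or we can apply Lemma~\ref{olariu} directly: a connected $K_3$-free graph need not be $Z_1$-free, but note that $Z_1$ contains $K_3$, so every $K_3$-free graph is automatically $Z_1$-free. Hence by Lemma~\ref{olariu}, $G$ is $K_3$-free or complete multipartite; but a complete multipartite graph with at least two nonempty parts and any part of size $\geq 2$ contains $K_3$ unless it is complete \emph{bipartite}. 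So the $K_3$-free hypothesis already forces $G$ to be either genuinely triangle-free with no further multipartite structure, or complete bipartite. I would record complete bipartite as one of the allowed outcomes and proceed under the assumption that $G$ is triangle-free and not complete bipartite.

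The core of the argument is to show that a triangle-free $\{K_1 \cup \claw\}$-free connected graph of independence at least $5$ has maximum degree at most $2$, hence is a path or a cycle. First I would observe that in a triangle-free graph the neighbourhood $N(v)$ of any vertex $v$ is an independent set, so if some vertex $v$ has degree at least $3$, then $v$ together with three of its neighbours induces a $\claw$ (the centre $v$ and three pairwise non-adjacent leaves). The key step is then to produce a fifth vertex $w$, non-adjacent to all four vertices of this claw, which would yield an induced $K_1 \cup \claw$ and a contradiction. This is where the independence hypothesis $\alpha(G) \geq 5$ enters: I would start from an independent set $\{a_1, \dots, a_5\}$ and argue that around a high-degree vertex one can always find a claw disjoint from and anticomplete to a fifth independent vertex.

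The main obstacle will be turning ``large independence number'' into ``an isolated vertex anticomplete to a chosen claw''. The cleanest route is probably to argue by contradiction about the maximum degree: suppose $\Delta(G) \geq 3$, fix a vertex $v$ of degree $\geq 3$ with independent neighbours $x_1, x_2, x_3$, and show that among a maximum independent set of size $5$ some vertex $w$ avoids the closed neighbourhood of $\{v, x_1, x_2, x_3\}$, or else reshuffle which claw and which independent vertex one uses. One must be careful that the five independent vertices could themselves be adjacent to $v$ or to the $x_i$; the triangle-free condition limits these adjacencies (e.g.\ no two of the $x_i$ share a common neighbour besides $v$ without creating problems), and counting the independent vertices against the bounded local structure should force a suitable $w$ to exist. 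Once $\Delta(G) \leq 2$ is established, $G$ is a disjoint union of paths and cycles, and connectedness gives exactly a path or a cycle, completing the three-way classification. Finally, for the ``in particular'' clause I would simply invoke Ramsey's Theorem (Theorem~\ref{Ramsey}): since $G$ is $K_3$-free, on at least $R(5,3)$ vertices it must contain induced $5K_1$, i.e.\ it has independence at least $5$, so $n = R(5,3)$ works.
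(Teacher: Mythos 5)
Your reduction to the case where $G$ is triangle-free and not complete bipartite is fine (although the appeal to Olariu's lemma is vacuous: a $K_3$-free graph satisfies its conclusion trivially, so it contributes nothing), and your Ramsey argument for the ``in particular'' clause is correct. The gap is in the core step. You want to show that a vertex of degree at least $3$, together with three of its (independent) neighbours, extends to an induced $K_1 \cup \claw$ by finding a fifth vertex $w$ anticomplete to this claw, and you hope that ``counting the independent vertices against the bounded local structure'' will force such a $w$ to exist. No such counting can work: the graph $K_{2,5}$ (or $K_{3,5}$) is connected, triangle-free, $K_1 \cup \claw$-free, of independence $5$, and has vertices of degree at least $3$, yet it contains no induced $K_1 \cup \claw$ at all. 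Every hypothesis that your counting argument feeds on is satisfied by these graphs, so the argument as described must fail. The assumption that $G$ is not complete bipartite, which you set aside at the start, has to enter the degree argument in an essential way, and your sketch gives no mechanism for this --- ``reshuffle which claw and which independent vertex one uses'' is precisely the part that needs a proof.

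The paper resolves this by inverting the logic: instead of deriving a contradiction from a degree-$\geq 3$ vertex, it shows that such a vertex forces $G$ to \emph{be} complete bipartite. Fix a maximum independent set $I$ with $|I| \geq 5$. The key observation is that any vertex with at least three neighbours in $I$ is adjacent to all of $I$: otherwise that vertex, three of its neighbours in $I$, and a missed vertex of $I$ induce $K_1 \cup \claw$. One then locates a degree-$\geq 3$ vertex $x$ outside $I$ (such a vertex exists by a pigeonhole argument using $|I| \geq 5$), shows that $x$ has at least three, hence all, vertices of $I$ as neighbours, and finally shows that every vertex $y \notin I \cup \{x\}$ is also adjacent to all of $I$: by triangle-freeness $y$ is not adjacent to $x$, so applying $K_1 \cup \claw$-freeness to claws centred at $x$ shows $y$ misses at most two vertices of $I$, and the key observation then upgrades this to all of $I$. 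Triangle-freeness makes $V(G) \setminus I$ independent, so $G$ is complete bipartite with parts $I$ and $V(G) \setminus I$. If you wish to keep your contradiction framing, this is exactly the content you would need to supply in order to use the ``not complete bipartite'' hypothesis.
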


\begin{proof}
We consider a maximum independent set, say $I$, of vertices of $G$ (assuming that $|I| \geq 5$).
In addition, we can assume that $G$ is neither a path nor a cycle.
In particular, $G$ contains a vertex of degree at least $3$ (since $G$ is connected).
Clearly, every such vertex is the centre of an induced copy of $\claw$
(since $G$ is $K_3$-free).
Furthermore, we observe that $G$ contains such vertex, say $x$, not belonging to $I$
(since $G$ is $K_1 \cup \claw$-free and $|I| \geq 5$).
In the remainder of the proof,
we shall use the fact that every vertex adjacent to at least three vertices of $I$
is, in fact, adjacent to all vertices of $I$ (since $G$ is $K_1 \cup \claw$-free).

We show that $x$ is adjacent to at least three vertices of $I$ (and thus to all).
To the contrary, we suppose that $x$ is adjacent to at most two vertices of $I$.
Clearly, $x$ is adjacent to at least one vertex of $I$
(by the choice of $x$ and $I$).
Since $G$ is $K_1 \cup \claw$-free,
there is a vertex $x'$ adjacent to $x$ and to at least three vertices of~$I$.
Thus, $x'$ is adjacent to all vertices of~$I$,
which contradicts the fact that $G$ is $K_3$-free.

We consider a vertex $y$ not belonging to $I \cup \{ x \}$.
Clearly, $y$ is adjacent to a vertex of $I$,
and thus $y$ is not adjacent to~$x$
(since $G$ is $K_3$-free).
Consequently, we get that $y$ is adjacent to at least $|I| - 2$ vertices of $I$
(since $G$ is $K_1 \cup \claw$-free),
and thus $y$ is adjacent to all vertices of $I$.
It follows that $G$ is a complete bipartite graph.
\end{proof}

\begin{lemma}\label{applyRamsey}
For every positive integer $k$, there exists $n$ such that
every $\{ kK_1 \cup K_2, K_3 \}$-free graph
(on at least $n$ vertices) is bipartite.
For instance,
$n = R \left( k - 1, 3 \right) + \frac{8k(k^2-1)}{2k + 1} + 2k + 2$
will do.
\end{lemma}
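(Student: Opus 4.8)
The plan is to prove the contrapositive structurally: since $G$ is $K_3$-free, it is bipartite precisely when it contains no odd cycle, so I would assume $G$ has an odd cycle and deduce that $|V(G)|$ is small. Fix a \emph{shortest} odd cycle $C = v_0 v_1 \cdots v_{2m} v_0$; since $G$ is $K_3$-free we have $2m+1 \ge 5$, and minimality forces $C$ to be induced. First I would bound its length. The vertices of $C$ non-adjacent to both endpoints of the edge $v_0 v_1$ are exactly $v_3, v_4, \dots, v_{2m-1}$, which induce a path on $2m-3$ vertices and hence contain an independent set of size $m-1$; together with $v_0 v_1$ this is an induced $(m-1)K_1 \cup K_2$. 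As $G$ is $(kK_1 \cup K_2)$-free this gives $m-1 \le k-1$, so $|C| = 2m+1 \le 2k+1$.

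The key structural step, and the one I expect to be the main obstacle, is to control how vertices off $C$ attach to $C$. Because $C$ is a shortest odd cycle in a $K_3$-free graph, every vertex $x \notin C$ has at most two neighbours on $C$, and if it has two then they lie at cycle-distance exactly two (that is, they are $v_{i-1}, v_{i+1}$ for some $i$). This is a parity argument: if $x$ is adjacent to $v_i$ and $v_j$, one of the two arcs between them has odd length, and re-routing that arc through the length-two path $v_i\,x\,v_j$ produces a strictly shorter odd closed walk unless the complementary arc has length two; moreover three pairwise-distance-two neighbours cannot coexist on an odd cycle. The care here lies in the bookkeeping of arc parities and in ruling out the three-neighbour case uniformly over all $m$ with $2 \le m \le k$.

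Finally I would count. For each edge $e_i = v_i v_{i+1}$ of $C$, let $W_i$ be the set of vertices off $C$ adjacent to neither endpoint of $e_i$. If $W_i$ contained an independent set of size $k$, it would form an induced $kK_1 \cup K_2$ together with $e_i$; since $G[W_i]$ is $K_3$-free, Ramsey's Theorem (Theorem~\ref{Ramsey}) then bounds $|W_i|$ in terms of a Ramsey number. One can sharpen this by adjoining the cycle vertex $v_{i+3}$, which is non-adjacent to $e_i$, to the independent set, so that only an external independent set of size $k-1$ is needed and the relevant bound becomes $R(k-1,3)$. By the attachment structure of the previous paragraph, each vertex off $C$ fails to lie in $W_i$ for only a bounded number of indices $i$, so summing $\sum_i |W_i|$ over the at most $2k+1$ edges and comparing the two estimates yields an explicit upper bound on the number of vertices off $C$.

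Adding back the at most $2k+1$ vertices of $C$ then bounds $|V(G)|$, contradicting $|V(G)| \ge n$ once $n$ exceeds this bound; the remaining work is the routine averaging that converts the two estimates into the explicit value $\tfrac{8k(k^2-1)}{2k+1} + 2k + 2$ added to $R(k-1,3)$. Since the statement only asserts that this particular $n$ \emph{will do}, any clean accounting of the same two inequalities suffices, and the precise constant is not essential to the argument.
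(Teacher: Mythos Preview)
Your overall strategy---take a shortest odd cycle $C$, bound its length via the forbidden $kK_1\cup K_2$, and double-count vertices off $C$ through the sets $W_i$---is sound and certainly yields \emph{some} finite $n$. There is, however, a genuine gap in your sharpening step: adjoining $v_{i+3}$ to an independent set $I\subseteq W_i$ need not keep it independent, since membership in $W_i$ only forbids adjacency to $v_i$ and $v_{i+1}$, and your own attachment analysis expressly permits an off-cycle vertex to have $v_{i+3}$ (together with $v_{i+5}$, say) as its neighbours on $C$. Dropping the sharpening you still get $|W_i|<R(k,3)$, so the argument survives; but then the averaging gives $|V(G)\setminus C|\le\tfrac{\ell}{\ell-4}\bigl(R(k,3)-1\bigr)$, which for $\ell=5$ is $5\bigl(R(k,3)-1\bigr)$. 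This is not the stated constant, and your closing remark that ``the precise constant is not essential'' is at odds with the lemma, which explicitly asserts that the displayed value of $n$ will do.

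The paper recovers that exact value by a different decomposition. It takes an arbitrary induced odd cycle (no shortest-cycle argument, no two-neighbour attachment claim) and splits the off-cycle vertices into $O$, those with no neighbour on $C$, and $N$, the rest. The $R(k-1,3)$ term comes from $O$: since $O$ has no edges to $C$, any independent set in $O$ can be augmented by the $\tfrac{\ell-3}{2}\ge 1$ independent cycle vertices missing a fixed cycle edge, so $G[O]$ is $\{(k-1)K_1,K_3\}$-free and Ramsey applies directly. The $\tfrac{8k(k^2-1)}{2k+1}$ term comes from $N$ via a double count over \emph{pairs} $(v,e)$ with $v\in C$ and $e$ a cycle edge not incident with $v$: the vertices of $N$ adjacent to $v$ but to neither end of $e$ form an independent set (by $K_3$-freeness) of size at most $k-1$, there are $\ell(\ell-2)$ such pairs, and every vertex of $N$ lies in at least $\tfrac{\ell-1}{2}$ of them. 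Your shortest-odd-cycle device is an elegant shortcut for controlling attachments, but it is this $O$/$N$ split together with the pair-indexed count---not your edge-indexed $W_i$ count---that produces the bound in the statement.
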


\begin{proof}
We consider $n$ chosen as suggested and a $\{ kK_1 \cup K_2, K_3 \}$-free graph $G$
which is not bipartite, and we show that $G$ has at most $n-1$ vertices.
Since $G$ is not bipartite, there is an induced cycle $C_\ell$ where $\ell$ is odd.
Clearly, this cycle contains
$\frac{\ell - 3}{2} K_1 \cup K_2$ as an induced subgraph. 
Hence, we have $5 \leq \ell \leq 2k + 2$. 

We fix a set, say $C$, of vertices inducing $C_\ell$,
and let $O$ be the set of all vertices of $G$
which are adjacent to no vertex of $C$.
We note that the graph induced by $O$ is
$\{ (k - \frac{\ell - 3}{2})K_1, K_3 \}$-free.
By Theorem~\ref{Ramsey},
we get
\begin{equation*}
|O| \leq  R \left( k - \frac{\ell - 3}{2}, 3 \right) - 1 \leq  R \left( k - 1, 3 \right) - 1.
\end{equation*}

In addition, we let $N = V(G) \sm (C \cup O)$,
and we show that $|N| \leq \frac{8k(k^2-1)}{2k+1}$.
We consider a pair $\{ v, e \}$ such that
$v$ is a vertex of $C$
and $e$ is an edge of the cycle induced by $C$ and $v$ is not incident with $e$.
For each such pair,
we let $N_{v,\overline{e}}$ denote the set of all vertices of $N$
adjacent to $v$ and adjacent to none of the vertices incident with $e$.
We note that $N_{v,\overline{e}}$ is an independent set
(since $G$ is $K_3$-free),
and $|N_{v,\overline{e}}| \leq k - 1$
(since $G$ is $kK_1 \cup K_2$-free).
We observe that there are $\ell(\ell-2)$ distinct such pairs $\{ v, e \}$,
and thus $\ell(\ell-2)$ sets $N_{v,\overline{e}}$.  
Furthermore, we observe that every vertex of $N$
belongs to at least $\frac{\ell-1}{2}$ of these sets.
Consequently, we get
\begin{equation*}
|N| \leq \frac{2 \ell (\ell-2) (k-1)}{\ell-1} \leq \frac{8k(k^2-1)}{2k+1}.
\end{equation*}
We conclude that $G$ has  $|O| + |N| + \ell$ vertices,
and $|O| + |N| + \ell \leq n - 1$.
\end{proof}

\begin{obs}\label{cm}
Let $k$ and $\ell$ be positive integers
and $G$ be a $k K_1$-free graph (on at least $(k - 1)(\ell -1) + 1$ vertices)
whose every component is a complete multipartite graph.
Then $G$ contains $K_{\ell}$ as a subgraph.
\end{obs}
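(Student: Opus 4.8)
The plan is to count the vertices of $G$ component by component, using that a complete multipartite graph carries very few vertices relative to the product of its clique number and its independence number. Throughout I write $\alpha(H)$ for the independence number of a graph $H$, so that $\omega(H)$ and $\alpha(H)$ denote its clique number and independence number respectively. Since $G$ is $kK_1$-free we have $\alpha(G) \le k-1$; in particular $k \ge 2$, as otherwise $\alpha(G) \le k-1 = 0$ would force $G$ to have no vertices, contradicting $|V(G)| \ge 1$.

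The key step is a per-component estimate. Let $H$ be a component of $G$, so $H$ is a complete multipartite graph $K_{n_1, \dots, n_m}$. Its parts are independent sets and every two vertices in distinct parts are adjacent, so $\omega(H) = m$ and $\alpha(H) = \max_i n_i$. Hence
\[
|V(H)| = \sum_{i=1}^m n_i \le m \cdot \max_i n_i = \omega(H)\,\alpha(H).
\]

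Next I would sum these bounds over the components $H_1, \dots, H_t$ of $G$. The independence number is additive over components while the clique number is the maximum, so $\alpha(G) = \sum_j \alpha(H_j)$ and $\omega(H_j) \le \omega(G)$ for every $j$. Combining this with the per-component estimate gives
\[
|V(G)| = \sum_{j=1}^t |V(H_j)| \le \sum_{j=1}^t \omega(H_j)\,\alpha(H_j) \le \omega(G)\sum_{j=1}^t \alpha(H_j) = \omega(G)\,\alpha(G) \le (k-1)\,\omega(G).
\]
Since $|V(G)| \ge (k-1)(\ell-1)+1$, I then obtain $(k-1)(\ell-1)+1 \le (k-1)\,\omega(G)$, and dividing by $k-1 \ge 1$ yields $\omega(G) > \ell-1$. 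As $\omega(G)$ is an integer, $\omega(G) \ge \ell$, that is, $G$ contains $K_\ell$ as a subgraph.

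This is at heart a pigeonhole estimate, so I do not expect a genuine obstacle; the only points needing care are the degenerate case $k=1$ (handled above by noting the hypotheses are then unsatisfiable) and the bookkeeping that $\alpha$ adds while $\omega$ maxes over components, which is precisely what makes the single factor $(k-1)$ appear rather than a sum. If one prefers, an equivalent route is to observe that each component, and hence $G$ itself, is perfect, so that $\omega(G) = \chi(G) \ge |V(G)| / \alpha(G) \ge \big((k-1)(\ell-1)+1\big)/(k-1) > \ell-1$, giving the same conclusion at once.
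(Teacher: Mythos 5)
Your proof is correct and is essentially the paper's argument: both decompose $G$ into its components, bound each component's order by (number of parts) $\times$ (largest part size), i.e.\ by clique number times independence number, and use that independence numbers add over components and total at most $k-1$. The only cosmetic difference is that you argue directly to conclude $\omega(G) \geq \ell$, whereas the paper phrases the same count as a contradiction under the assumption that $G$ is $K_{\ell}$-free.
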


\begin{proof}
For the sake of a contradiction, we suppose that $G$ is $K_{\ell}$-free.
We let $G_1, \dots, G_p$ denote the graphs given by components of $G$
and $\alpha_1, \dots, \alpha_p$ denote their independence numbers.
We note that $\alpha_1 + \dots + \alpha_p \leq k - 1$,
and $G_i$ has at most $\alpha_i (\ell - 1)$ vertices for every $i = 1, \dots, p$.
Thus, $G$ has at most $(k - 1)(\ell -1)$ vertices, a contradiction.
\end{proof}

\begin{coro}\label{applyRamsey2}
Let $k$ be an integer greater than $2$
and let $n$ be an integer given by Lemma~\ref{applyRamsey} 
(with respect to $k$).
Let $G$ be a $\{ kK_1 \cup K_2, Z_1 \}$-free graph,
and $M$ be the subgraph of $G$ (possibly empty) given by all components of $G$
which are complete multipartite (possibly trivial),
and $N$ be the subgraph given by the remaining components.
Then at least one of the following statements is satisfied.
\begin{enumerate}
\item
$N$ is bipartite (possibly empty).
\item
$N$ has at most $n - 1$ vertices; and furthermore
if $M$ has at least $n(k - 2) - 2k + 5$ vertices,
then $M$ contains $K_{n - 1}$ as a subgraph. 
\end{enumerate}
\end{coro}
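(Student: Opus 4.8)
The plan is to apply Olariu's Characterisation (Lemma~\ref{olariu}) to classify the components of $G$, and then to treat $N$ and $M$ separately using Lemma~\ref{applyRamsey} and Observation~\ref{cm}, respectively. Since $G$ is $Z_1$-free, Lemma~\ref{olariu} gives that every component of $G$ is $K_3$-free or complete multipartite; as the components forming $N$ are, by definition, \emph{not} complete multipartite, each of them is $K_3$-free. Hence $N$ is $K_3$-free, and since $N$ is an induced subgraph of $G$ it is also $kK_1 \cup K_2$-free; that is, $N$ is $\{ kK_1 \cup K_2, K_3 \}$-free.

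I would then distinguish cases according to the size of $N$. If $N$ has at least $n$ vertices, then Lemma~\ref{applyRamsey} (applied with respect to $k$) yields that $N$ is bipartite, so statement~(1) holds; the same conclusion holds trivially when $N$ is empty. It therefore remains to treat the case in which $N$ is nonempty and has at most $n - 1$ vertices, where the first assertion of statement~(2) is immediate and only the `furthermore' part requires an argument.

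For the `furthermore' part, the key observation is that each component of $N$ contains an induced $K_1 \cup K_2$. Indeed, such a component is connected, $K_3$-free, and not complete multipartite; since a graph with no induced $K_1 \cup K_2$ is precisely a complete multipartite graph (its complement being a disjoint union of cliques, as $\overline{K_1 \cup K_2} = P_3$), the component must contain an induced $K_1 \cup K_2$. Fixing such a copy inside $N$ and recalling that $M$ and $N$ are unions of distinct components of $G$ (so that no edges run between them), I note that if $M$ contained an induced $(k-1)K_1$, then $G$ would contain an induced $kK_1 \cup K_2$, a contradiction. Thus $M$ is $(k-1)K_1$-free.

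Finally, I would apply Observation~\ref{cm} to $M$ with the parameters $k - 1$ and $n - 1$ in place of $k$ and $\ell$: every component of $M$ is complete multipartite and $M$ is $(k-1)K_1$-free, so whenever $M$ has at least $(k-2)(n-2) + 1 = n(k-2) - 2k + 5$ vertices it contains $K_{n-1}$ as a subgraph, completing statement~(2). The main obstacle is the identification of the induced $K_1 \cup K_2$ inside each non-complete-multipartite component of $N$, since it is exactly this copy that, combined with the disjointness of $M$ and $N$ and the $kK_1 \cup K_2$-freeness of $G$, pins the independence number of $M$ down to $k - 2$ and makes the threshold match the bound of Observation~\ref{cm} precisely.
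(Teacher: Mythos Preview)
Your argument is correct and follows essentially the same route as the paper's proof: Olariu's Characterisation forces $N$ to be $K_3$-free, Lemma~\ref{applyRamsey} handles the dichotomy between $N$ bipartite and $N$ small, and the induced $K_1 \cup K_2$ in $N$ combined with Observation~\ref{cm} (with parameters $k-1$ and $n-1$) gives the clique in $M$. The only cosmetic difference is that the paper splits cases on whether $N$ is bipartite rather than on $|V(N)|$, and it justifies the induced $K_1 \cup K_2$ in $N$ via the assumption that $N$ is not bipartite rather than via your (equally valid) observation that every component of $N$ is non-complete-multipartite.
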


\begin{proof}
We recall that every component of $G$ is $K_3$-free or complete multipartite
(by Lemma~\ref{olariu}),
and hence $N$ is $K_3$-free.
We can assume that $N$ is not bipartite.
In particular, $N$ has at most $n - 1$ vertices (by Lemma~\ref{applyRamsey}).
Furthermore,
$N$ contains induced $K_1 \cup K_2$, and so
$M$ is $(k-1)K_1$-free.
We conclude that
if $M$ has at least $(k - 2)(n - 2) + 1$ vertices,
then it contains $K_{n - 1}$ 
(by Observation~\ref{cm}).
\end{proof}

In addition, we state the following characterisation
(a similar result was shown by Rao~\cite{Rao}).
The proof is given at the end of the present section.

\begin{lemma}
\label{l4}
Let $G$ be a graph containing $C_5$ as an induced subgraph.
Then $G$ is $\{ 2K_1 \cup K_2, \overline{K_1 \cup P_4} \}$-free
if and only if
it can be obtained from some of the graphs $G_1, \dots, G_{14}$
(depicted in Figure~\ref{figStructureOfG})
by blowing-up vertex $i$ to an independent set and
blowing-up vertices $c_1, \dots, c_5$ to complete graphs.
More precisely, the blow-up process is as follows.
For vertex $i$ (if present in the graph),
add any number of new vertices (possibly none)
adjacent precisely to the neighbours of $i$;
and then similarly for each $c_j$ in sequence (where $1 \leq j \leq 5$),
add in sequence any number of new vertices 
adjacent precisely to $c_j$ and to the neighbours of $c_j$.
\end{lemma}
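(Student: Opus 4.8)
The plan is to prove both implications around a \emph{fixed} induced copy of $C_5$, say on the vertices $c_1,\dots,c_5$ in cyclic order (indices read modulo $5$), and to read off each vertex $v\notin\{c_1,\dots,c_5\}$ from its \emph{trace} $N(v)\cap\{c_1,\dots,c_5\}$. A preliminary observation, used everywhere, is that $2K_1\cup K_2$-freeness alone is very restrictive: if $v$ has at most one neighbour on the cycle, then its four (or five) non-neighbours contain an edge $c_ac_{a+1}$ together with a further cycle-vertex $c_b$ missing both its ends, so $\{v,c_b,c_a,c_{a+1}\}\cong 2K_1\cup K_2$; the same holds when $v$ has exactly two, non-adjacent, neighbours (e.g.\ $\{v,c_2,c_4,c_5\}$ when the trace is $\{c_1,c_3\}$). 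In particular every vertex has at least two neighbours on the cycle, so $G$ is automatically connected. Dually, $\overline{K_1\cup P_4}$-freeness forbids large traces: a vertex adjacent to four cycle-vertices is universal to an induced $P_4$ of the cycle (for instance $c_1c_2c_3c_4$), and a vertex adjacent to all five is universal to the induced $P_4$ on $c_4c_5c_1c_2$, in either case yielding $\overline{K_1\cup P_4}$.

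For the necessity direction I would first conclude from the above that every trace is one of exactly three shapes: an edge $\{c_j,c_{j+1}\}$, a path $\{c_{j-1},c_j,c_{j+1}\}$, or an edge together with its opposite vertex $\{c_j,c_{j+1},c_{j+3}\}$. The next step is a multiplicity analysis of vertices sharing a trace. Two path-type vertices $u,v$ of the same trace must be adjacent — otherwise $\{u,v,c_4,c_5\}\cong 2K_1\cup K_2$ for the trace $\{c_1,c_2,c_3\}$ — so they, together with $c_j$, form a clique; these are precisely the true twins of $c_j$, i.e.\ the clique blow-up of $c_j$. Two vertices sharing an edge-plus-opposite trace must instead be non-adjacent (else $\{u,v,c_3,c_5\}\cong 2K_1\cup K_2$ for the trace $\{c_1,c_2,c_4\}$), so they form an independent set of false twins, which is the blow-up of the vertex $i$. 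Finally, a bare-edge trace admits at most one vertex, since two such vertices give a $2K_1\cup K_2$ whether adjacent ($\{u,v,c_3,c_5\}$) or not ($\{u,v,c_3,c_4\}$). It then remains to fix the adjacencies \emph{between} vertices of different traces, which is again decided by testing each candidate pair against $2K_1\cup K_2$ and $\overline{K_1\cup P_4}$ on five or six vertices. Contracting each clique of true twins and each independent set of false twins to a single vertex yields a reduced graph of bounded order, and enumerating the admissible reduced graphs produces the list $G_1,\dots,G_{14}$.

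For the sufficiency direction I would isolate a twin-preservation principle. An induced subgraph of a blow-up records, for each of its vertices, the base vertex it came from; two vertices with a common origin are true twins (for a clique blow-up) or false twins (for an independent blow-up) inside that induced subgraph. Since $\overline{K_1\cup P_4}$ contains neither a pair of true twins nor a pair of false twins, it can occur in a blow-up only if it already occurs in the base graph. By contrast $2K_1\cup K_2$ does contain twins — its two isolated vertices are false twins and its edge is a pair of true twins — so a blow-up could in principle create a new copy; but inspecting the two possibilities shows that a new copy arises exactly when $V(G_t)\setminus N[i]$ contains an edge, or some $V(G_t)\setminus N[c_j]$ contains a non-edge. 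Hence sufficiency reduces to the finite checks that each $G_t$ is $\{2K_1\cup K_2,\overline{K_1\cup P_4}\}$-free and contains $C_5$, that $V(G_t)\setminus N[i]$ is independent, and that each $V(G_t)\setminus N[c_j]$ is a clique; the blow-ups plainly retain the induced $C_5$.

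The main obstacle is the necessity direction, and inside it the cross-trace adjacency analysis together with the bookkeeping that certifies the list is \emph{exactly} $G_1,\dots,G_{14}$: one must check that no admissible combination of traces has been omitted and that all forced cross-adjacencies are mutually consistent, which is where the parallel reading against Figure~\ref{figStructureOfG} is indispensable. The remaining ingredients — the trace classification, the single/clique/independent dichotomy for repeated traces, and the twin-preservation principle — are short and self-contained.
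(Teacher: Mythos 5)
Your skeleton is essentially the paper's: fix the induced $C_5$, classify the outside vertices by their traces (your three trace shapes are exactly the paper's blue/red classification), analyse small configurations against the two forbidden graphs, and read off the blow-up structure; your twin-preservation principle for the ``if'' direction is a clean formalisation of what the paper disposes of in one sentence, and your same-trace analysis (clique for path traces, independent set for edge-plus-opposite traces, at most one vertex per bare-edge trace) is correct. The genuine gap is in the step you declare routine: that the adjacencies \emph{between} vertices of different traces are ``decided by testing each candidate pair against $2K_1\cup K_2$ and $\overline{K_1\cup P_4}$ on five or six vertices.'' This is false. The paper's own pair analysis exhibits the obstruction: in configuration $H_5$ of Figure~\ref{figC++} (a red and a blue vertex in a certain relative position) the edge $uv$ is drawn dashed precisely because both the adjacent and the non-adjacent variant are $\{ 2K_1 \cup K_2, \overline{K_1 \cup P_4} \}$-free together with $C$, so the seven-vertex test leaves this adjacency completely free.

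Because of this, your trace classes are \emph{not} twin classes (modules) of $G$ -- two clones of the same cycle position may differ in which red vertices they attach to -- so ``contracting each clique of true twins and each independent set of false twins'' either is not well defined on trace classes or, if done on genuine twin classes, gives a reduced graph whose order you have no bound on: a priori a blue class could split into arbitrarily many sub-classes according to its attachments to red vertices, and nothing in your proposal excludes this. This is exactly what the hard part of the paper's proof does: it fixes an edge $e$ of $C$ maximising the set $R_e$ of red vertices complete to its ends, and proves, via configurations on \emph{three and more} outside vertices rather than pairs (the ``furthermore'' part of Claim~\ref{ciii} and Claim~\ref{civ}), that at most two vertices of $R_e$ fail to be complete to the opposite class $O$, and that if two fail then every red vertex lies in $R_e$ and $O$ splits in a single prescribed way. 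The finite list $G_1,\dots,G_{14}$ encodes precisely these bounded attachment patterns; pairwise checking alone cannot produce it, so your enumeration step has no basis. Separately, a small repair is needed in your ``if'' direction: an edge in the blow-up of $V(G_t)\setminus N[i]$ can also arise from a single labelled vertex $c_j$ outside $N[i]$ blown up to a clique (and dually a non-edge in the blow-up of $V(G_t)\setminus N[c_j]$ from the label $i$ inside it), so besides your two conditions one must require that $i$ is adjacent to every labelled $c_j$; this is easy but your ``exactly when'' clause as stated misses it.
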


\begin{figure}[h!]
    \centering
    \includegraphics[scale=0.7]{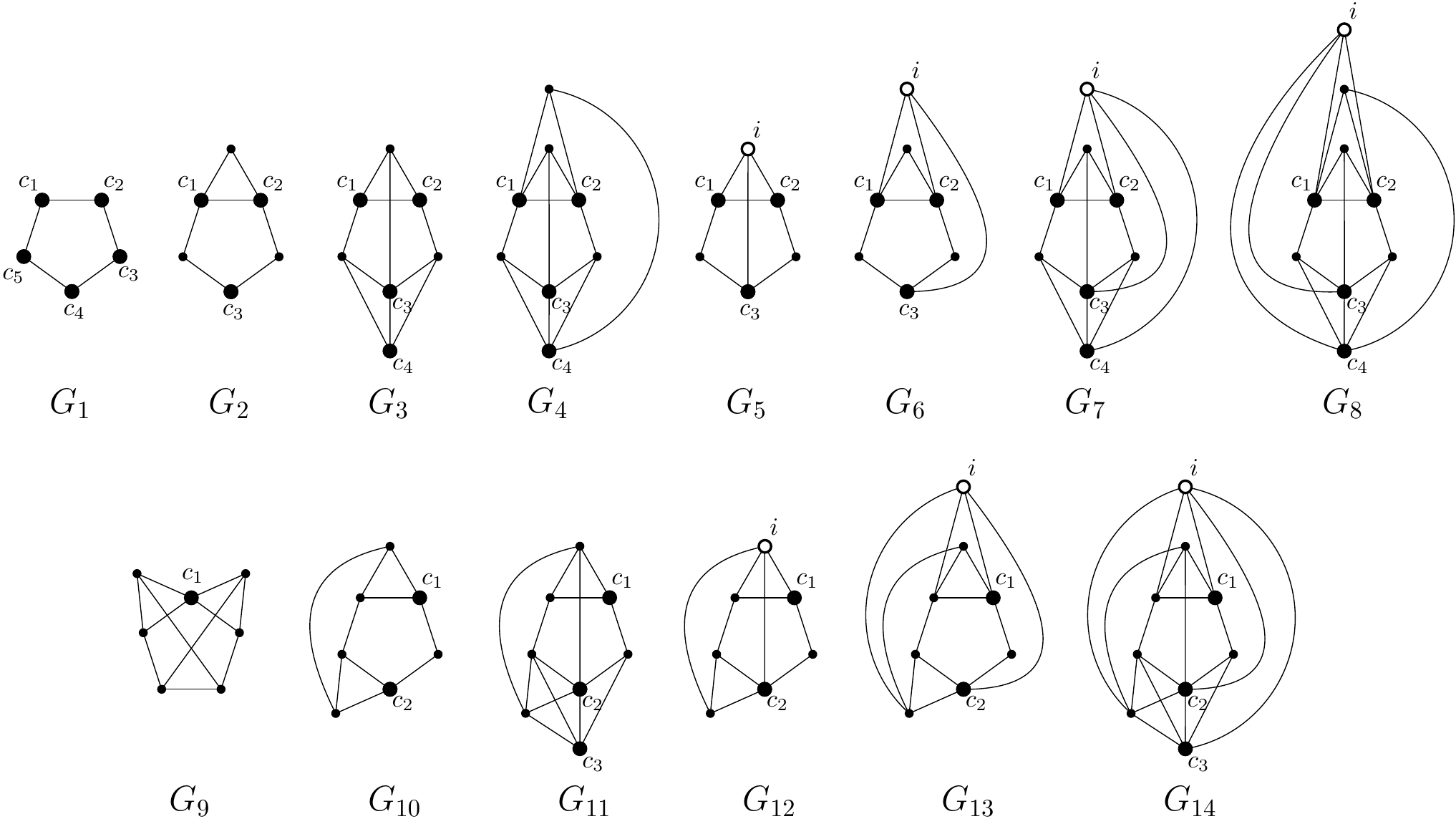}
    \caption{Graphs $G_1, \dots, G_{14}$
    whose particular vertices are labelled $i$ and $c_1, \dots, c_5$.
    We distinguish vertex $i$ by depicting it as a large 
    empty circle, and vertices $c_1, \dots, c_5$ by depicting them as large
    full circles.
	The graphs encode possible structure
	of a non-perfect $\{ 2K_1 \cup K_2, \overline{K_1 \cup P_4} \}$-free graph
	where each empty, full circle represents blowing-up a vertex 
	to an independent set, to a complete graph, respectively.}
    \label{figStructureOfG}
\end{figure}

We also show the following technical statement
on particular forbidden subgraphs of a forest
(it will be used for forests and for complements of forests).

\begin{lemma}\label{forest}
Let $F$ be a forest (on at least $4$ vertices)
distinct from $P_4$ and $\claw$.
The following statements are satisfied.
\begin{enumerate}
\item If $F$ is $\{ 4K_1, 2K_1 \cup K_2, 2K_2 \}$-free,
then $F$ is precisely the graph $K_1 \cup P_3$.
\item If $F$ is $\{ 2K_1 \cup P_3, 2K_2, K_{1,4} \}$-free,
then $F$ is either
$K_1 \cup P_3$ or $K_1 \cup P_4$ or $K_1 \cup \claw$ or $\claw^+$
or $(k + 2)K_1$ or $kK_1 \cup K_2$
(for some $k \geq 2$).
\end{enumerate}
\end{lemma}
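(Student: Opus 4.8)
The plan is to handle both parts with a common first reduction and then branch. The shared step exploits $2K_2$-freeness to pin down the coarse shape of $F$. Since $F$ is a forest, if two distinct components each contained an edge, then one edge from each would induce a $2K_2$ (there are no edges between components); hence at most one component of $F$ has an edge. I would therefore write $F = T \cup jK_1$, where $T$ is the unique nontrivial component (a tree), or $T$ is empty or a single vertex, and $j \ge 0$ counts the isolated vertices. Next I would observe that a tree is $2K_2$-free exactly when it has diameter at most $3$: a tree of larger diameter contains an induced $P_5$ (paths in trees are induced), which contains $2K_2$, whereas stars and double stars are directly seen to be $2K_2$-free. Thus $T$ is a star $K_{1,m}$ or a double star with two adjacent centres carrying $p \ge 1$ and $q \ge 1$ leaves, and each part reduces to a finite case analysis over the type of $T$ and the value of $j$.

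For part (1), I would impose $4K_1$-freeness and $2K_1 \cup K_2$-freeness. The former gives $\alpha(F) = \alpha(T) + j \le 3$. The latter forces $j \le 1$ (two isolated vertices together with any edge of $T$ would induce $2K_1 \cup K_2$), and, when $j = 1$, additionally forces $T$ to contain no induced $K_1 \cup K_2$, since an isolated vertex together with such a configuration would again yield $2K_1 \cup K_2$. Running through the cases, every star is ruled out (it is $\claw$ or too small) and every double star with $j=0$ is excluded (the smallest, $P_4$, is forbidden by hypothesis, and $(p,q) = (1,2)$ is the chair $\claw^+$, which I would check contains $2K_1 \cup K_2$); the only survivor is $T = P_3$ with $j = 1$, i.e.\ $F = K_1 \cup P_3$.

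For part (2), I would instead impose $K_{1,4}$-freeness and $2K_1 \cup P_3$-freeness. The former bounds each centre's degree by $3$, so a star is $K_{1,m}$ with $m \le 3$ and each centre of a double star carries at most two leaves. The latter forbids an induced $P_3$ together with two vertices isolated from it and from each other; this caps $j \le 1$ as soon as $T$ contains a $P_3$, and it is precisely the condition eliminating the double star $(p,q)=(2,2)$ and $\claw^+ \cup K_1$. Going through the types: trivial $T$ gives $(k+2)K_1$; $T = K_2$ gives $kK_1 \cup K_2$ with $k \ge 2$; $T = P_3$ forces $j=1$ and yields $K_1 \cup P_3$; $T = \claw$ yields $K_1 \cup \claw$; and the double stars $(1,1)$ and $(1,2)$ yield $K_1 \cup P_4$ (with $j=1$) and $\claw^+$ (with $j=0$) respectively, matching the asserted list exactly.

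The arguments are elementary, so the main obstacle is organisational: ensuring the split over (type of $T$, value of $j$) is exhaustive and that the boundary conditions $|V(F)| \ge 4$, $F \ne P_4$, and $F \ne \claw$ are applied at the right places. The genuinely delicate verification is identifying the double star with one centre bearing two leaves as the chair $\claw^+$, and then checking by hand precisely which of the small graphs $\claw^+$, $K_1 \cup \claw$, $K_1 \cup P_4$ do and do not contain the relevant forbidden induced subgraph; this is where an oversight would most easily slip in, so I would carry out those membership checks explicitly.
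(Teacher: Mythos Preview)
Your proposal is correct and follows essentially the same strategy as the paper: use $2K_2$-freeness to isolate a single nontrivial tree component and then finish by an elementary case analysis. The organisation differs slightly---you classify $T$ upfront via the diameter-$\le 3$ dichotomy into stars and double stars and then tabulate $(T,j)$, whereas the paper argues more ad hoc (for (1) it first deduces maximum degree $\le 2$, and for (2) it reduces to the case where $T$ is not a path and hence contains a claw)---but the content is the same and your explicit enumeration is arguably cleaner.
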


\begin{proof}
In order to show statement (1), we observe the following facts.
\begin{itemize}
\item
The maximum degree of $F$ is at most $2$ 
(since $F$ is a $\{ 4K_1, 2K_1 \cup K_2 \}$-free forest distinct from $\claw$).
\item
$F$ has at most two components, 
(since it has at least $4$ vertices, and it is $\{ 4K_1, 2K_1 \cup K_2 \}$-free).
\item
If $F$ has two components, then one is trivial
(since it is $2K_2$-free).
\end{itemize}
Consequently, we get that all edges of $F$ belong to a common path
and that $F$ has at most two components.
Thus, $F$ contains induced $K_1 \cup P_3$
(since $F$ has at least $4$ vertices, and it is distinct from $P_4$).
Furthermore, $F$ cannot contain more vertices
(since it is $2K_1 \cup K_2$-free).

We show (2).
We can assume that $F$ has at least two edges
(otherwise $F$ is $(k + 2)K_1$ or $kK_1 \cup K_2$ for some $k \geq 2$,
and the statement is satisfied).
We note that $F$ has precisely one non-trivial component
(since it is $2K_2$-free); and we let $T$ denote the tree given by this component.

In addition, we can assume that $F$ is distinct from $K_1 \cup P_3$ and $K_1 \cup P_4$.
We observe that $T$ is not a path
(since $F$ has at least $4$ vertices
and it is distinct from $P_4$ and it is $\{ 2K_1 \cup P_3, 2K_2 \}$-free).
Hence, $T$ contains $\claw$, and so
$F$ contains induced $K_1 \cup \claw$ or $\claw^+$
(since $F$ is distinct from $\claw$ and it is $K_{1,4}$-free).
We conclude that $F$ is, in fact, $K_1 \cup \claw$ or $\claw^+$
(since it is $\{ 2K_1 \cup P_3, 2K_2, K_{1,4} \}$-free).
\end{proof}

%

We also state the following two facts on induced subgraphs of $\overline{K_3 \cup P_4}$,
and we give short proofs.
(This could also be shown simply by checking all graphs on at most $8$ vertices
with the help of a computer.)

\begin{obs}
\label{cK3P4on5}
There are precisely five distinct induced subgraphs of $\overline{K_3 \cup P_4}$ on $4$ vertices,
and precisely five such subgraphs on $5$ vertices.
Namely,
$\splitatcommas{
P_4, \claw, Z_1, D, C_4
}$
and
$\splitatcommas{
\overline{K_1 \cup P_4}, K_{1,2,2}, \overline{K_2 \cup P_3}, K_{1,1,3}, K_{2,3} 
}$.
\end{obs}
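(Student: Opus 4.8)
The plan is to identify $\overline{K_3 \cup P_4}$ concretely and then enumerate its induced subgraphs of the two relevant orders. The complement $\overline{K_3 \cup P_4}$ has $7$ vertices: three vertices forming the complement of $K_3$ (hence a triple $a_1, a_2, a_3$ that is pairwise \emph{adjacent} in the complement, i.e.\ a triangle) and four vertices $p_1 p_2 p_3 p_4$ coming from $P_4$ (whose complement on these four vertices is again a $P_4$, say $p_1 p_3 p_2 p_4$, since $\overline{P_4} \cong P_4$), with all edges present between the $a$-part and the $p$-part. So the working description is: a triangle $\{a_1,a_2,a_3\}$, a path $Q$ on $\{p_1,p_2,p_3,p_4\}$ that is a $P_4$, and complete adjacency between the two parts. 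First I would record this structure precisely, since every subsequent induced-subgraph computation reduces to choosing how many of the $a$'s and which of the $p$'s to retain.

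Next I would handle the $4$-vertex induced subgraphs by a small case split on the number of chosen triangle-vertices $t \in \{0,1,2,3\}$, recalling that any two retained triangle-vertices are adjacent and any retained $a$ is adjacent to every retained $p$. For $t=0$ we get an induced subgraph of the path $P_4$, namely $P_4$ itself (the only $4$-vertex option). For $t=3$ we keep the whole triangle plus one $p$, which is adjacent to all three $a$'s; this is $K_3$ plus a dominating vertex, i.e.\ $K_4$ — but $K_4$ is \emph{not} an induced subgraph of $\overline{K_3 \cup P_4}$, which forces a re-examination of the adjacency normalisation, so I would double-check the edge set here (the four $p$-vertices induce a $P_4$, not $K_4$, and no $p$ is adjacent to all of $a_1,a_2,a_3$ simultaneously in a way producing $K_4$). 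For $t=1$ and $t=2$ one keeps the appropriate number of path vertices and reads off the resulting graph on the chosen vertices. Carrying out this enumeration and discarding isomorphic duplicates should yield exactly the list $P_4, \claw, Z_1, D, C_4$.

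For the $5$-vertex subgraphs I would run the same case analysis on $t \in \{0,\dots,3\}$, now choosing $5-t$ path-vertices (so for $t=0$ we would need five path vertices, which is impossible since the path has only four, immediately ruling that case out), and identify each resulting graph against the claimed list $\overline{K_1 \cup P_4}, K_{1,2,2}, \overline{K_2 \cup P_3}, K_{1,1,3}, K_{2,3}$. I expect the main obstacle to be bookkeeping rather than conceptual: ensuring that the adjacencies within the retained $p$-vertices (an induced sub-path of the $P_4$ $Q$) combine correctly with the complete join to the triangle-part, and then matching each resulting graph to its name up to isomorphism without missing a duplicate or an omission. A clean way to control this is to count: $\binom{4}{0}+\binom{4}{1}+\cdots$ over the valid $(t,5-t)$ choices, verify that after discarding isomorphism types exactly five survive in each order, and cross-check against the complementation symmetry (the $5$-vertex list should consist precisely of the complements, within the induced setting, of the small graphs arising from the opposite side). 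I would therefore close the argument by exhibiting one explicit vertex subset realising each named graph and arguing, via the degree/edge-count invariants, that no further type appears.
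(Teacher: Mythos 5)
There is a genuine gap, and it is in the very first step: your description of $\overline{K_3 \cup P_4}$ is wrong. Complementation turns the triangle $K_3$ into an \emph{independent set} $3K_1$, and turns the absence of edges between the two parts into a \emph{complete join}. So $\overline{K_3 \cup P_4}$ is the join of an independent set $\{a_1,a_2,a_3\}$ with a $P_4$, i.e.\ every $a_i$ is adjacent to every $p_j$, but the $a_i$ are pairwise non-adjacent. The graph you describe (a triangle joined to $P_4$) is $\overline{3K_1 \cup P_4}$, a different graph. The contradiction you noticed at $t=3$ (a $K_4$ appearing, although $\overline{K_3\cup P_4}$ is $K_4$-free) is a real symptom of this error, but your proposed repair goes in exactly the wrong direction: you keep the triangle and instead deny that any $p$ is adjacent to all of $a_1,a_2,a_3$. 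In the correct graph every $p$ \emph{is} adjacent to all three $a_i$; what prevents $K_4$ is the pairwise non-adjacency of the $a_i$. With your structure the $4$-vertex enumeration would yield $P_4$, $Z_1$, $D$ and $K_4$, missing $\claw$ and $C_4$ entirely, so the error is not cosmetic — it breaks the enumeration.

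Once the structure is corrected, your plan is exactly the paper's proof: the paper fixes the maximum independent set of $\overline{K_3 \cup P_4}$ (which is precisely $\{a_1,a_2,a_3\}$) and splits into cases according to how many of its vertices are removed, reading off $P_4$ ($i=3$), $Z_1$ or $D$ ($i=2$), $D$ or $C_4$ ($i=1$), and $\claw$ ($i=0$) for the $4$-vertex subgraphs, and analogously for the $5$-vertex ones. Your closing idea of cross-checking the $5$-vertex list against complements of the $5$-vertex induced subgraphs of $K_3 \cup P_4$ (namely $K_1\cup P_4$, $K_2\cup P_3$, $K_1 \cup 2K_2$, $K_3\cup K_2$, $K_3\cup 2K_1$) is sound and would have caught the mistake, but as written the proposal rests on a false description of the graph and a false correction of it.
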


\begin{proof}
We discuss graphs obtained from $\overline{K_3 \cup P_4}$
by removing $3$ vertices.
We consider the maximum independent set of $\overline{K_3 \cup P_4}$
and let $i$ be the number of vertices of this set which are being removed.
For the case that $i = 3$, we note that the resulting graph is $P_4$.
For $i = 2$, we get $Z_1$ or $D$,
and for $i = 1$, we get $D$ or $C_4$.
Finally $i = 0$, gives $\claw$.
Similarly for removing $2$ vertices, we discuss the cases
and get the subgraphs on $5$ vertices.
\end{proof}

\begin{obs}
\label{cK3P4}
A graph is an induced subgraph of $\overline{K_3 \cup P_4}$
if and only if
it is
$\splitatcommas{
\{ 4K_1, 2K_1 \cup K_2, K_1 \cup P_3, 2K_2, K_1 \cup K_3, K_4, C_5,
\overline{P_5}, K_{3,3}, K_{2,2,2} \}
}$-free.
\end{obs}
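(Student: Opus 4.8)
The plan is to pass to complements. Writing $H = \overline{G}$, and recalling that a graph is an induced subgraph of $\overline{K_3 \cup P_4}$ if and only if its complement is an induced subgraph of $K_3 \cup P_4$, the statement is equivalent to the assertion that $H$ is an induced subgraph of $K_3 \cup P_4$ if and only if $H$ is $\{ K_4, D, Z_1, C_4, \claw, 4K_1, C_5, P_5, 2K_3, 3K_2 \}$-free. Indeed, this ten-element family is exactly the set of complements of the listed graphs: for instance $\overline{2K_1 \cup K_2} = D$, $\overline{K_1 \cup P_3} = Z_1$, $\overline{2K_2} = C_4$, $\overline{K_1 \cup K_3} = \claw$, $\overline{K_{3,3}} = 2K_3$ and $\overline{K_{2,2,2}} = 3K_2$, while $4K_1, K_4, C_5, P_5$ are the complements of $K_4, 4K_1, C_5, \overline{P_5}$. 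I would record this reduction first and then prove the two implications for $H$.

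For the forward implication, since the induced-subgraph relation is transitive it suffices to verify that none of the ten graphs is itself an induced subgraph of $K_3 \cup P_4$. As $K_3 \cup P_4$ is a disjoint union, every connected induced subgraph lies inside a single component, hence is either an induced subgraph of $K_3$ (so $K_1, K_2$ or $K_3$) or an induced subgraph of $P_4$ (so triangle-free on at most four vertices); this at once rules out $K_4, D, Z_1, C_4, \claw, C_5$ and $P_5$. The three disconnected graphs are excluded by parameter counts: $4K_1$ because the independence number of $K_3 \cup P_4$ is $3$, $2K_3$ because only one component of $K_3 \cup P_4$ contains a triangle, and $3K_2$ because the induced matching number of $K_3 \cup P_4$ is $1 + 1 = 2$.

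For the reverse implication I would analyse the components of $H$. As $H$ is $Z_1$-free, Lemma~\ref{olariu} gives that each component is $K_3$-free or complete multipartite. A complete multipartite component that is additionally $\{ \claw, C_4, D, K_4 \}$-free has all parts of size at most $2$ (by $\claw$-freeness), at most one part of size $2$ (by $C_4$-freeness) and, when a part of size $2$ is present, at most one further vertex (by $D$-freeness), while $K_4$-freeness bounds the complete case; this forces such a component to be one of $K_1, K_2, K_3, P_3$. A $K_3$-free component, being $\claw$-free, has maximum degree at most $2$, hence is a path or a cycle, and triangle-freeness together with $\{ C_4, C_5, P_5 \}$-freeness (cycles of length at least $6$ contain an induced $P_5$) leaves only $K_1, K_2, P_3, P_4$. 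Thus every component of $H$ belongs to $\{ K_1, K_2, K_3, P_3, P_4 \}$.

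Finally I would use the three remaining forbidden graphs to bound the global structure: $2K_3$-freeness permits at most one $K_3$ component, $3K_2$-freeness permits at most two non-trivial components (each contains an induced edge, and such edges in distinct components are pairwise non-adjacent), and $4K_1$-freeness gives independence number at most $3$. It then remains to realise $H$ as an induced subgraph of $K_3 \cup P_4$, that is, to write $H = K_a \cup Q$ with $a \leq 3$ and $Q$ an induced subgraph of $P_4$; here I would assign the unique $K_3$ component (or, in its absence, one $K_2$ component, or a single $K_1$, or nothing) to the $K_3$-side, and check, using the independence bound and the bound on the number of non-trivial components, that the remaining components indeed form an induced subgraph of $P_4$. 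The main obstacle is precisely this reverse implication — the component classification via Olariu's characterisation and the concluding routing of components — whereas the forward implication and the complement bookkeeping are routine.
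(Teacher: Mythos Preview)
Your argument is correct. The complement translation is accurate, the forward implication is immediate, and the component analysis via Lemma~\ref{olariu} together with the three global constraints ($2K_3$-, $3K_2$-, $4K_1$-freeness) does pin down $H$ as an induced subgraph of $K_3 \cup P_4$; the final routing step, though stated tersely, goes through by the short case check you indicate.

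The paper proceeds differently. It works with $G$ directly and splits on whether $\alpha(G) \geq 3$. In the first case it shows that any independent triple $I$ is complete to $V(G)\setminus I$ (using $\{4K_1, 2K_1\cup K_2, K_1\cup P_3\}$-freeness) and that $G - I$ is a $\{3K_1, 2K_2\}$-free forest, hence an induced subgraph of $P_4$; thus $G$ is the join of $3K_1$ with such a forest. In the second case ($\alpha(G)\leq 2$) it passes to $\overline{G}$, observes it is a $\{4K_1, 3K_2, K_{1,3}, P_5\}$-free forest, and concludes $\overline{G}\subseteq K_2\cup P_4$. So the paper never invokes Olariu here and avoids a full component classification, trading that for a case split on $\alpha(G)$; your approach is more uniform and makes the role of each forbidden subgraph in the complement explicit, at the cost of a slightly longer enumeration at the end.
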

\begin{proof}
We note that $\overline{K_3 \cup P_4}$ satisfies the property,
and hence it is satisfied by each of its induced subgraphs.

We consider a
$\splitatcommas{
\{ 4K_1, 2K_1 \cup K_2, K_1 \cup P_3, 2K_2, K_1 \cup K_3, K_4, C_5,
\overline{P_5}, K_{3,3}, K_{2,2,2} \}
}$-free graph $G$,
and we discuss two cases and show that 
it is an induced subgraph of $\overline{K_3 \cup P_4}$.

First, we suppose that $G$ contains an independent set, say $I$, of size $3$.
We note that every vertex of $V(G) \sm I$ is adjacent to all vertices of $I$
(since $G$ is $\{ 4K_1, 2K_1 \cup K_2, K_1 \cup P_3 \}$-free),
and that the graph $G - I$ is $\{ K_3, C_4, C_5, 3K_1, 2K_2 \}$-free
(since $G$ is $\{ K_4, K_{2,2,2}, C_5, K_{3,3}, 2K_2 \}$-free).
In particular, $G - I$ is a $\{ 3K_1, 2K_2 \}$-free forest,
and thus an induced subgraph of $P_4$ and the statement follows.

Next, we suppose that $G$ is $3K_1$-free, and we consider the complement of $G$.
Similarly, we note that $\overline{G}$ is a forest
(since $G$ is $\{ 3K_1, 2K_2, C_5, \overline{P_5} \}$-free).
Furthermore, $\overline{G}$ is $\{ 4K_1, 3K_2, \claw, P_5 \}$-free
(since $G$ is $\{ K_4, K_{2,2,2}, K_1 \cup K_3, \overline{P_5} \}$-free).
We observe that $\overline{G}$ is an induced subgraph of 
$K_2 \cup P_4$, and the statement follows.
\end{proof}

We conclude this section by proving Lemma~\ref{l4}.

\begin{proof}[Proof of Lemma~\ref{l4}]
\setcounter{claim}{0}
\setcounter{claimprefix}{\getrefnumber{l4}}
We consider the graphs obtained by the construction.
The `if part' of the statement follows by observing that they are
$\{ 2K_1 \cup K_2, \overline{K_1 \cup P_4} \}$-free.

We show the `only if part' of the statement.
We consider a set of vertices, say $C$, inducing $C_5$.
For every vertex $u$ of $V(G) \sm C$,
we get that $2 \leq |N(u) \cap C| \leq 3$
(since $G$ is $\{ 2K_1 \cup K_2, \overline{K_1 \cup P_4} \}$-free);
and we say that $u$ is \emph{blue} if $N(u) \cap C$ induces $P_3$,  
and $u$ is \emph{red} if $N(u) \cap C$ induces $K_2$ or $K_1 \cup K_2$
(see Figure~\ref{figBlueOrRed}).

\begin{figure}[h!]
    \centering
    \includegraphics[scale=0.7]{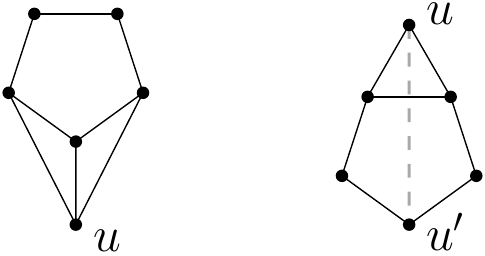}
    \caption{Possible graphs induced by $C \cup \{u\}$.
    Edge $uu'$ (depicted as dashed grey) may or may not be present in the graph;
    and so the picture on the right-hand side, in fact, represents two graphs.
    Depending on $N(u) \cap C$, vertex $u$ is blue (left-hand side) or red (right-hand side).}
    \label{figBlueOrRed}
\end{figure}

In addition, we consider a set, say $A$, consisting of two vertices of $V(G) \sm C$,
and we shall discuss the graph induced by $C \cup A$.
We let $X = C \cup A$, and let
$r$ be the number of red vertices in $A$,
and $H_1, \dots, H_8$ be the graphs depicted in Figure~\ref{figC++}
(considered with the dashed grey edges).
We show three claims.

\begin{figure}[h!]
    \centering
    \includegraphics[scale=0.7]{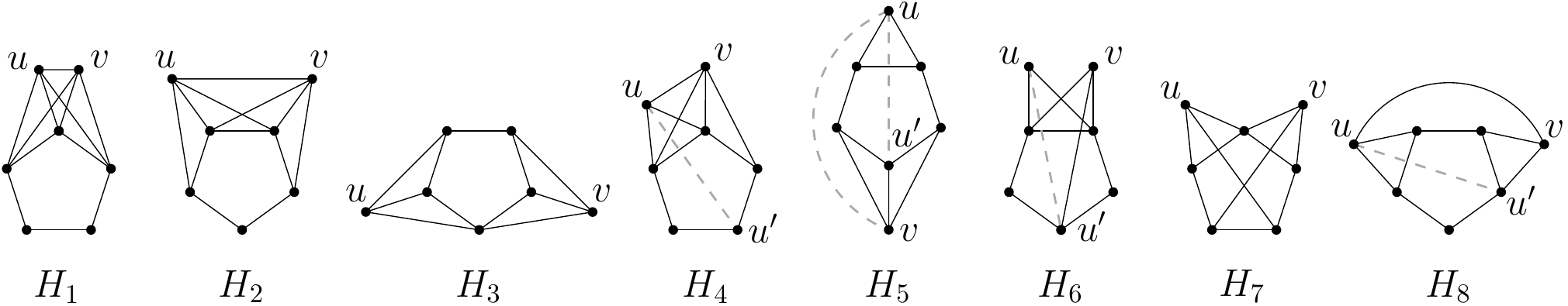}
    \caption{Graphs $H_1, \dots, H_8$.
    The vertices of $A$ are labelled $u$ and $v$,
    and one particular vertex of $C$ is labelled $u'$ in graphs $H_4, H_5, H_6$ and $H_8$.
    We distinguish edge $uv$ of $H_5$ and all edges $uu'$ 
    by depicting them as dashed grey.}
    \label{figC++}
\end{figure}

\begin{claim} 
\label{ci}
If $r = 0$, then $X$ induces either $H_1$ or $H_2$ or $H_3$.
\end{claim}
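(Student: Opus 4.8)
Claim \ref{ci} asserts that if both vertices of $A$ are blue (i.e.\ $r=0$), then the graph induced by $X = C \cup A$ is one of the three specific graphs $H_1, H_2, H_3$.

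Let me understand the setup. We have $C$ inducing $C_5$, say with vertices $c_1, \dots, c_5$ in cyclic order. A vertex $u \in V(G) \setminus C$ is "blue" if $N(u) \cap C$ induces $P_3$. Since $C$ is a 5-cycle, the only way to get an induced $P_3$ from three consecutive-or-not vertices... let me think. In $C_5$, a subset of 3 vertices induces $P_3$ iff they are three consecutive vertices on the cycle. So a blue vertex $u$ is adjacent to exactly three consecutive vertices of $C$.

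We have two blue vertices $u, v$. Each is adjacent to a "window" of 3 consecutive vertices of $C$. We want to classify the possible configurations up to the cyclic/reflective symmetry of $C_5$.

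**Plan of proof.**

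The plan is to analyze the two blue vertices $u$ and $v$ by their neighborhoods in $C$ and by the adjacency between them, using the forbidden subgraphs $2K_1 \cup K_2$ and $\overline{K_1 \cup P_4}$ to rule out bad configurations.

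First I would fix notation: write $C = c_1 c_2 c_3 c_4 c_5$ as a $5$-cycle and recall that a blue vertex is adjacent to exactly three consecutive vertices of $C$. So $N(u) \cap C$ and $N(v) \cap C$ are each one of the five possible $3$-windows, and up to the dihedral symmetry of $C_5$ we may fix $N(u) \cap C = \{c_1, c_2, c_3\}$. The windows for $v$ then fall into a small number of cases according to how much they overlap $u$'s window (overlap of $3$, $2$, $1$): these correspond to $v$'s window being $\{c_1,c_2,c_3\}$, $\{c_2,c_3,c_4\}$ (equivalently $\{c_5,c_1,c_2\}$ by reflection), or $\{c_3,c_4,c_5\}$ (equivalently $\{c_4,c_5,c_1\}$).

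Next, for each such relative position I would split on whether $u$ and $v$ are adjacent. This gives a small finite list of candidate graphs on $7$ vertices. For each candidate I would check whether it is $\{2K_1 \cup K_2, \overline{K_1 \cup P_4}\}$-free. The surviving configurations are exactly $H_1, H_2, H_3$; the rest must be eliminated by exhibiting an induced copy of one of the two forbidden graphs. Concretely, when the two windows overlap in only one vertex, the two "private" endpoints of $u$ together with the two "private" endpoints of $v$ tend to produce an induced $\overline{K_1 \cup P_4}$ or $2K_1 \cup K_2$ among $u, v$ and the vertices of $C$ not shared; and the presence or absence of the edge $uv$ flips which forbidden graph appears. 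The case of full overlap (both windows equal) forces $u$ and $v$ to have the same neighborhood on $C$, and one checks directly that both the adjacent and non-adjacent sub-cases yield admissible graphs (among $H_1, H_2, H_3$) or are excluded.

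**Main obstacle.** The hard part will be the bookkeeping: verifying for each of the (few) relative window positions and each choice of the edge $uv$ that exactly the right forbidden subgraph appears, without missing a case or double-counting symmetric ones. In particular I must be careful that the dihedral symmetry of $C_5$ is correctly used to reduce the windows for $v$, and that when $u$ and $v$ are non-adjacent I correctly locate an induced $2K_1 \cup K_2$ (two independent vertices plus a disjoint edge) rather than accidentally finding a $P_3$. Since $\overline{K_1 \cup P_4}$ is the complement of $K_1 \cup P_4$, I would find it convenient to recognize it directly in $G$ as a specific $5$-vertex graph (a vertex complete to a $P_4$-complement configuration); matching this pattern against each candidate is the step most prone to error, so I would organize it as a short table rather than prose.
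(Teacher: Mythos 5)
Your proposal is correct and takes essentially the same route as the paper's proof: the paper likewise splits into three cases according to the number of vertices of $C$ adjacent to both vertices of $A$ (your window overlap of $3$, $2$ or $1$), implicitly refined by whether the edge $uv$ is present, and kills the bad configurations by exhibiting an induced $2K_1 \cup K_2$ or $\overline{K_1 \cup P_4}$. The only difference is presentational: the paper argues by contradiction and displays the forbidden subgraphs in a figure, whereas you classify all six candidate graphs directly.
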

\begin{proofcl}[Proof of Claim~\ref{ci}]
For the sake of a contradiction,
we suppose that $X$ induces none of $H_1, H_2, H_3$.
Considering the number of vertices of $C$ which are adjacent to both vertices of $A$,
we discuss three cases, see Figure~\ref{figForbiddenC++} (first row),
and we observe that the graph induced by
$X$ is not 
$\{ 2K_1 \cup K_2, \overline{K_1 \cup P_4} \}$-free,
a contradiction.
\end{proofcl}

\begin{claim} 
\label{cii}
If $r = 1$, then the graph induced by $X$
can be obtained from either $H_4$ or $H_5$ by removing some of the dashed grey edges (possibly none).
\end{claim}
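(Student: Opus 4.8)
The plan is to mirror the argument of Claim~\ref{ci}, now with one red and one blue vertex in $A$. First I would exploit the dihedral symmetry of the $C_5$ induced by $C$ to fix the neighbourhood of the blue vertex, say $v$, taking $N(v) \cap C$ to be a prescribed path on three consecutive vertices of $C$; the reflection of $C_5$ that fixes this path can then be reused to halve the remaining cases. Writing $u$ for the red vertex, I would record that $N(u) \cap C$ induces $K_2$ or $K_1 \cup K_2$, so it consists of one edge of the cycle together with at most one further vertex non-adjacent to both endpoints of that edge.

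The core of the proof is a finite case analysis organised, exactly as for $r = 0$, by the number of vertices of $C$ adjacent to both $u$ and $v$ (equivalently, by how the fixed path $N(v) \cap C$ meets the edge of $N(u) \cap C$). For each position of the edge of $N(u) \cap C$ relative to $N(v) \cap C$, and for each choice of the possible extra neighbour of $u$ in the $K_1 \cup K_2$ case, I would test whether the graph induced by $X = C \cup \{u, v\}$ is $\{2K_1 \cup K_2, \overline{K_1 \cup P_4}\}$-free. In the configurations that fail I would exhibit an induced $2K_1 \cup K_2$ or $\overline{K_1 \cup P_4}$ (as displayed for $r = 0$ in Figure~\ref{figForbiddenC++}), ruling them out; in the configurations that survive I would verify that the resulting graph is precisely $H_4$ or $H_5$ of Figure~\ref{figC++} with some subset of its dashed grey edges deleted.

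Two points need care. The dashed edge $uu'$ encodes exactly the distinction between $N(u) \cap C$ inducing $K_2$ and inducing $K_1 \cup K_2$, while the dashed edge $uv$ of $H_5$ records whether $u$ and $v$ are themselves adjacent. For each surviving configuration I must therefore check that \emph{both} choices on every such dashed edge keep the graph $\{2K_1 \cup K_2, \overline{K_1 \cup P_4}\}$-free, since this is what justifies phrasing the conclusion as $H_4$ or $H_5$ with some dashed edges optionally removed, rather than as a single fixed graph. The main obstacle is purely the bookkeeping: confirming that, once $N(v) \cap C$ is fixed, the split by common-neighbour count is exhaustive, and that no surviving configuration escapes the list $\{H_4, H_5\}$. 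As all graphs in play have only seven vertices, each individual test is a direct inspection for the two forbidden subgraphs, and the residual reflection fixing $N(v) \cap C$ keeps the number of cases small.
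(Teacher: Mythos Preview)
Your proposal is correct and follows essentially the same route as the paper: a finite case analysis on the position of the red vertex's neighbourhood in $C$ relative to the blue vertex's fixed $P_3$, exhibiting an induced $2K_1 \cup K_2$ or $\overline{K_1 \cup P_4}$ in every configuration not of the form $H_4$ or $H_5$ with optional dashed edges (the paper records exactly these eliminations in the second row of Figure~\ref{figForbiddenC++}). The only superfluous step is your insistence on verifying that \emph{both} states of each dashed edge remain $\{2K_1 \cup K_2, \overline{K_1 \cup P_4}\}$-free---the claim is one-directional and only requires that every surviving configuration lands in the list, not that the list is tight; that converse check belongs to the `if' part of Lemma~\ref{l4}, handled separately.
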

\begin{proofcl}[Proof of Claim~\ref{cii}]
For the sake of a contradiction,
we suppose that the graph induced by $X$
violates the claim.
We discuss all cases,
see Figure~\ref{figForbiddenC++} (second row),
and we observe that the graph is not 
$\{ 2K_1 \cup K_2, \overline{K_1 \cup P_4} \}$-free,
a contradiction.
\end{proofcl}

\begin{figure}[h!]
    \centering
    \includegraphics[scale=0.7]{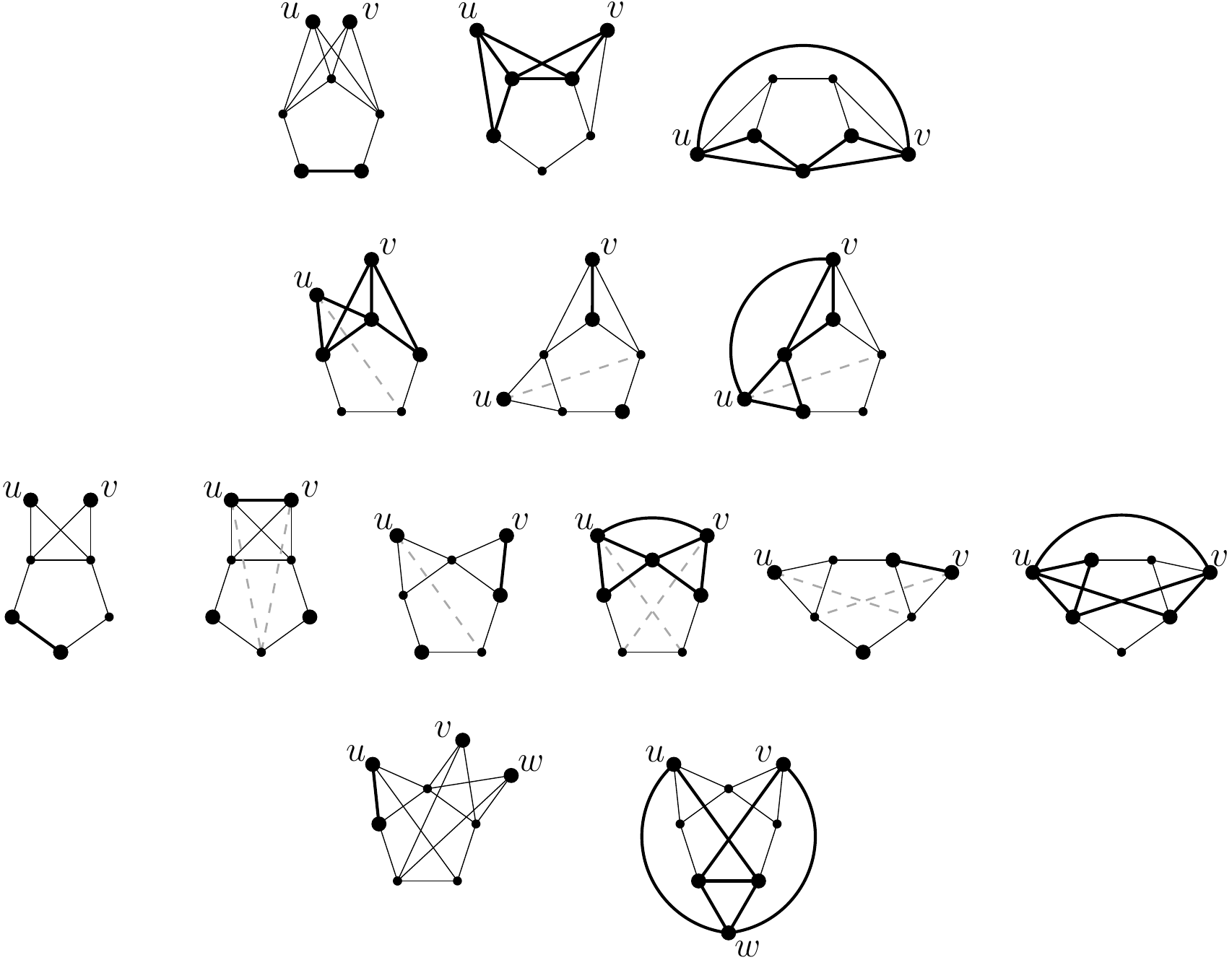}
    \caption{Particular adjacencies among the vertices of $A$ (labelled $u$ and $v$)
    and~$C$, and among $A$ and $C'$ (third row),
    and among $A$ and $C'$ and vertex $w$ (fourth row).
    The dashed grey edges indicate that the vertices may or may not be adjacent;
    for instance, the first picture in the second row represents two graphs
    (both containing induced $\overline{K_1 \cup P_4}$).
    In each picture, induced $2K_1 \cup K_2$ or $\overline{K_1 \cup P_4}$ is
    highlighted (the vertices and edges are depicted as bold).}
    \label{figForbiddenC++}
\end{figure}

We use the similarity of the adjacencies of blue vertices and vertices of
$C$ given by Claims~\ref{ci} and~\ref{cii}, and we consider
a set, say~$C'$, of five non-red vertices inducing $C_5$
(so that $C'$ and $A$ are disjoint).
We let $X' = C' \cup A$ and consider the graph induced by $X'$.

\begin{claim} 
\label{ciii}
If $r = 2$, then the graph induced by $X'$ is either $H_7$ or
it can be obtained from $H_6$ or $H_8$ by removing some of the dashed grey edges (possibly none).
Furthermore, if $X'$ induces $H_7$, then $G$ has precisely two red vertices.
\end{claim}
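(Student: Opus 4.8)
The plan is to prove Claim~\ref{ciii} by the same method as Claims~\ref{ci} and~\ref{cii}: a case analysis on how the two red vertices of $A$ attach to the cycle $C'$, discarding every configuration that is not one of the permitted graphs by exhibiting a forbidden induced $2K_1 \cup K_2$ or $\overline{K_1 \cup P_4}$. Write $A = \{u, v\}$ with both $u$ and $v$ red, so that each of $N(u) \cap C'$ and $N(v) \cap C'$ induces $K_2$ or $K_1 \cup K_2$; in the cycle $C'$ this means each red vertex dominates two adjacent vertices of $C'$, possibly together with a third vertex opposite to that edge. First I would fix a cyclic labelling of $C'$ and record, up to the dihedral symmetry of $C_5$, the possible positions of the edge (or edge-plus-opposite-vertex) dominated by $u$ and by $v$, treating the adjacency $uv$ as a free binary parameter. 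This produces the finite list of candidate graphs induced by $X'$.

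In the main step I would run through these candidates and eliminate the inadmissible ones. For each bad relative position --- for instance when the neighbourhoods of $u$ and $v$ on $C'$ are too far apart, or when $u \not\sim v$ leaves two non-adjacent vertices of $C'$ together with an uncovered edge --- the third row of Figure~\ref{figForbiddenC++} points to the induced $2K_1 \cup K_2$ or $\overline{K_1 \cup P_4}$ that arises, contradicting the hypothesis. The surviving configurations are exactly $H_7$ together with those obtainable from $H_6$ or $H_8$ by deleting some of the dashed grey edges, which is the first assertion of the claim.

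For the `furthermore' part I would show that $H_7$ cannot coexist with a third red vertex. Assuming that $X'$ induces $H_7$ and supposing for a contradiction that $G$ contains a red vertex $w \notin A$, I would use that $w$ again attaches to $C'$ in a red pattern and combine this with the rigid structure of $H_7$ on $\{u, v\} \cup C'$; the fourth row of Figure~\ref{figForbiddenC++} then exhibits a forbidden induced subgraph among $A$, $C'$ and $w$ in each placement of $w$. Since $u$ and $v$ are red, this yields that $G$ has precisely two red vertices.

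The main obstacle will be the bookkeeping of the case analysis: with two red vertices, each free to dominate either an edge or an edge-plus-opposite-vertex of $C'$, and with $uv$ a free parameter, the number of raw configurations is considerably larger than in Claims~\ref{ci} and~\ref{cii}. The work lies in organising them by the dihedral symmetry of $C_5$ and by the number of common neighbours of $u$ and $v$ on $C'$, so that each class collapses to a single figure. Verifying that every discarded class genuinely contains one of the two forbidden subgraphs, and that no admissible configuration is accidentally excluded, is the delicate point, and the figure is what makes this tractable.
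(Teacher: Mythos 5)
Your proposal is correct and follows essentially the same route as the paper: a finite case analysis of how the two red vertices attach to $C'$ (with $uv$ as a free parameter), discarding inadmissible configurations by exhibiting an induced $2K_1 \cup K_2$ or $\overline{K_1 \cup P_4}$ as in the third row of Figure~\ref{figForbiddenC++}, and then a contradiction argument for the `furthermore' part via the fourth row. The only minor difference is organisational: the paper trims the case analysis for the third red vertex $w$ by applying the already-proved first statement to the pairs $\{u,w\}$ and $\{v,w\}$, whereas you enumerate placements of $w$ directly, which is equivalent but slightly more laborious.
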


\begin{proofcl}[Proof of Claim~\ref{ciii}]
Similarly as above, we suppose that the graph induced by $X'$
violates the first statement of the claim,
and we observe that it is not 
$\{ 2K_1 \cup K_2, \overline{K_1 \cup P_4} \}$-free,
see Figure~\ref{figForbiddenC++} (third row).

The second statement of the claim is also shown by contradiction.
We suppose that $X'$ induces $H_7$ and that there is another red vertex, say $w$,
and we discuss the graph induced by $X' \cup \{w\}$.
We apply the first statement of the claim to
all $2$-element subsets of $A \cup \{w\}$ and
observe how this reduces the number of cases.
We discuss the remaining cases, see Figure~\ref{figForbiddenC++} (fourth row),
and we conclude that the graph induced by
$X' \cup \{w\}$ is not $\{ 2K_1 \cup K_2, \overline{K_1 \cup P_4} \}$-free,
a contradiction.
\end{proofcl}

In addition, we consider an edge $e$ whose both ends belong to $C$
and the set, say $R_e$, of all red vertices adjacent to both ends of $e$;
and we choose $e$ so that $|R_e|$ is maximised.
Using Claim~\ref{ciii}, we observe that
at most one red vertex does not belong to $R_e$.
We let $o$ be the vertex of $C$ which is adjacent to no end of $e$,
and we let $O$ be the set consisting of vertex $o$ and all vertices added for $o$
by the blow-up process.
We show the following.

\begin{claim} 
\label{civ}
At most two vertices of $R_e$ have the property that they are not adjacent to all vertices of $O$.
Furthermore, if there are two such vertices, then every red vertex belongs to $R_e$
and every vertex of $O$ is adjacent to precisely one of the two.
\end{claim}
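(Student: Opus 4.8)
The plan is to argue by contradiction, as in Claims~\ref{ci}--\ref{ciii}, and to reduce everything to the case $r=2$ already analysed in Claim~\ref{ciii}. The enabling observation is that every vertex $w$ of $O$ has exactly the same neighbours in $C\sm\{o\}$ as $o$ does (namely the two vertices of $C$ adjacent to $o$), and that a red vertex is adjacent to $o$ only if its neighbourhood in $C$ is $\{c_1,c_2,o\}$, where $e=c_1c_2$. Hence, for any $w\in O$, replacing $o$ by $w$ in a five-cycle of non-red vertices again produces a five-cycle of non-red vertices, to which Claim~\ref{ciii} applies; this lets me transport the $r=2$ analysis across the whole clique $O$. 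From the graphs $H_6,H_7,H_8$ of Figure~\ref{figC++} I would extract two facts: (A) whenever a pair of red vertices, together with such a five-cycle, induces $H_6$ or $H_8$, both red vertices are adjacent to the vertex of the cycle playing the role of $o$; and (B) in the configuration $H_7$ the two red vertices are adjacent to complementary parts of the cycle, and (by the last sentence of Claim~\ref{ciii}) $G$ then has no further red vertex.

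With (A) in hand the first statement is almost immediate. Suppose that $u_1,u_2\in R_e$ are both not adjacent to all of $O$, and let $w_1,w_2\in O$ witness this. Applying Claim~\ref{ciii} to $u_1,u_2$ together with a five-cycle in which $o$ is replaced by $w_1$ (and then by $w_2$), fact~(A) shows that the induced graph can be neither $H_6$ nor $H_8$, since otherwise $u_1$ and $u_2$ would both be adjacent to $w_1$ (respectively $w_2$). Hence the pair induces $H_7$, and (B) yields that $u_1,u_2$ are the only red vertices of $G$; in particular no third vertex of $R_e$ can fail to be adjacent to all of $O$, which is the first statement.

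This same dichotomy gives the furthermore part. If two bad vertices $u_1,u_2$ exist, the previous paragraph shows $G$ has precisely the two red vertices $u_1,u_2$, both of which lie in $R_e$; thus every red vertex belongs to $R_e$. Finally, fix $w\in O$ and apply Claim~\ref{ciii} to $u_1,u_2$ and the five-cycle obtained by substituting $w$ for $o$; by the argument above this induced graph is $H_7$, and reading off its adjacencies (fact~(B)) shows that $w$ is adjacent to exactly one of $u_1,u_2$. As $w\in O$ was arbitrary, $O$ is partitioned by adjacency to $u_1$ and to $u_2$, as claimed.

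I expect the main obstacle to be the verification of facts~(A) and~(B) directly from $H_6,H_7,H_8$, that is, checking that within the $r=2$ analysis of Claim~\ref{ciii} the adjacency of the two red vertices to the $o$-vertex of the cycle is forced exactly as stated, and that no intermediate configuration (for instance one red vertex adjacent to $o$ and the other not, with no copies present) escapes the trichotomy. This is the same figure-driven bookkeeping as in the proofs of Claims~\ref{ci}--\ref{ciii}, and the cleanest presentation reuses the highlighted $2K_1\cup K_2$ and $\overline{K_1\cup P_4}$ patterns of Figure~\ref{figForbiddenC++}.
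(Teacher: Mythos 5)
Your proposal contains a genuine gap, and it is exactly the one you flag at the end. The central deduction --- ``the pair $u_1,u_2$ induces $H_7$'' --- can never occur: $u_1$ and $u_2$ both lie in $R_e$, so they are both adjacent to both ends of $e$, and hence together with any of your five-cycles they always form the \emph{same-edge} configuration, which is the $H_6$-type family. The graph $H_7$ is the configuration in which the two red vertices attach to \emph{different} edges of the cycle; that is precisely why the last sentence of Claim~\ref{ciii} can conclude that $G$ has only two red vertices when $H_7$ appears (the blown-up graphs $G_2,\dots,G_8$ contain arbitrarily many red vertices pairwise in the same-edge configuration, so that configuration cannot be $H_7$). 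Your fact~(A) compounds this: the dashed edges $uu'$ in $H_6$ and $H_8$ encode \emph{optional} adjacencies, and Claim~\ref{ciii} asserts that the induced graph is $H_7$ \emph{or is obtained from $H_6$ or $H_8$ by deleting dashed edges}. The ``intermediate'' same-edge configuration in which exactly one of the two red vertices is adjacent to the $o$-role vertex is realisable --- indeed it is exactly the situation described by the furthermore part of Claim~\ref{civ} and realised by the graphs $G_9,\dots,G_{14}$ (equivalently $E_3,E_4$) --- so it does ``escape the trichotomy'', and your argument collapses at its first step. The only correct consequence of Claim~\ref{ciii} here is the weaker statement that \emph{at least one} of $u_1,u_2$ is adjacent to each $o$-role vertex.

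The deeper problem is that pairwise applications of Claim~\ref{ciii} cannot suffice: if $x\in O$ were adjacent to both $u_1$ and $u_2$, every pair-plus-cycle that Claim~\ref{ciii} can see is still admissible, so no contradiction arises at that level. The paper's proof instead builds a five-vertex configuration. From Claim~\ref{ciii} one gets $u'\neq v'$ (no vertex of $O$ misses both $u$ and $v$), $u\sim v'$, $v\sim u'$, $u\not\sim v$, and $u'\sim v'$ (as $O$ is complete), so $\{u,v',u',v\}$ induces $P_4$; then one checks that any offending vertex $x$ --- a vertex of $O$ adjacent to both $u,v$, or a red vertex outside $R_e$ (whose edge must be disjoint from $e$, the adjacent-edge case being killed by the furthermore part of Claim~\ref{ciii}, and which is then forced by the $H_8$-configuration and Claim~\ref{cii} to be adjacent to $u,v,u',v'$) --- dominates this $P_4$, producing an induced $\overline{K_1\cup P_4}$, a contradiction. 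Your enabling observation (replacing $o$ by its clones to apply Claim~\ref{ciii}) is correct and is implicitly used by the paper, but without the $P_4$-plus-dominating-vertex argument neither the bound of two deficient vertices nor the partition of $O$ can be reached.
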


\begin{proofcl}[Proof of Claim~\ref{civ}]
We note that the first statement of the claim follows from the second statement.
Hence, it is sufficient to show the second statement,
and proceed by contradiction.
We consider two distinct vertices $u,v$ of $R_e$, and two vertices $u',v'$ of $O$
such that $u$ is not adjacent to $u'$ and $v$ is not adjacent to $v'$
(and note that $u'$ and $v'$ are distinct by Claim~\ref{ciii}).
For the sake of a contradiction, we suppose that there is a vertex $x$
such that either $x$ is red and not belonging to $R_e$,
or $x$ belongs to $O$ and it is not true that $x$ is adjacent to precisely one of $u,v$
(and thus it is adjacent to both by Claim~\ref{ciii}).
Using Claims~\ref{ci}, \ref{cii} and~\ref{ciii},
we note that $\{u, v, u',v'\}$ induces $P_4$ and 
that $x$ is adjacent to $u,v, u'$ and $v'$, a contradiction
(see Figure~\ref{figDisjointUnion}).
\end{proofcl}

\begin{figure}[h!]
    \centering
    \includegraphics[scale=0.7]{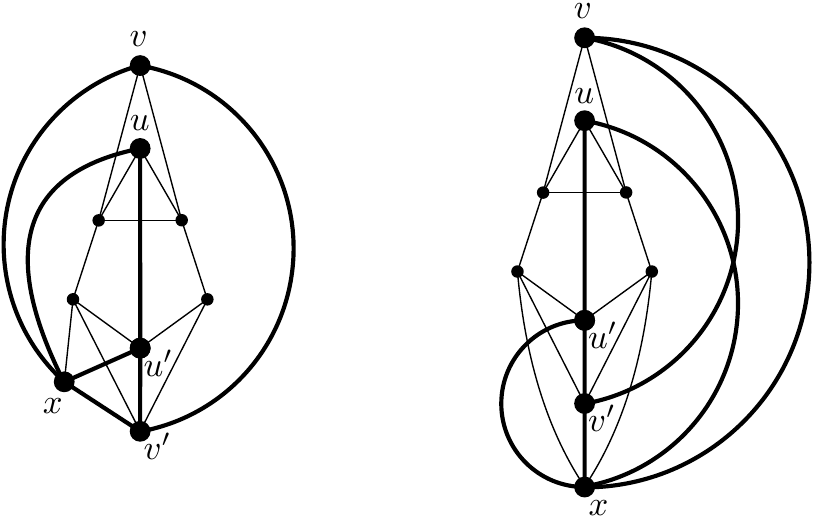}
    \caption{Graphs induced by $C \cup \{u, v, u',v', x\}$.
    In both pictures, induced $\overline{K_1 \cup P_4}$ is highlighted.}
    \label{figDisjointUnion}
\end{figure}

We apply Claims~\ref{ci}, \ref{cii} and~\ref{ciii}
to all $2$-element subsets of $V(G) \sm C$,
and we discuss what we know about the structure of $G$.
We recall that Claim~\ref{ci} gives the adjacencies between all non-red vertices.
In particular, if $G$ has no red vertex, then it can be obtained from $G_1$
by the blow-up process (described in Lemma~\ref{l4}); and so we can assume that $G$ has a red vertex.
We note that Claim~\ref{cii} restricts the relative positions of red and blue vertices (viewed from $C$).
Finally, Claim~\ref{ciii} gives the adjacencies between all red vertices,
and also some of the adjacencies between red and non-red vertices.
It remains to discuss the adjacencies
corresponding to dashed grey edges of graphs $H_6$ and $H_8$,
see Figure~\ref{figC++}.

Since at most one red vertex does not belong to $R_e$,
most of the remaining adjacencies are given by Claim~\ref{civ}.
We discuss two cases.
For the case that every red vertex belongs to $R_e$,
we observe that $G$ can be obtained from some of
$G_2, \dots, G_8$ by the blow-up process.
Otherwise,
we similarly conclude that $G$ can be obtained from some of $G_9, \dots, G_{14}$.
\end{proof}

\section{Showing perfectness}
\label{sPerf}
In the present section,
we collect sufficient conditions for a graph to be perfect
and we show the following. 
\begin{prop}
\label{perf}
Let $G$ be a graph.
If some of the following conditions is satisfied,
then $G$ is perfect.
\begin{enumerate}
\item
$G$ is a $\{ 2K_1 \cup K_2, D \}$-free graph
distinct from $C_5$ and from the graphs $E_1, \dots, E_4$ depicted in Figure~\ref{figE}.
\item
$G$ is a $\{ K_1 \cup P_3, \overline{K_3 \cup P_4} \}$-free graph
of independence at least $3$.
\item
$G$ is a connected $\{ \claw^+, Z_1 \}$-free graph
distinct from an odd cycle.
\item
$G$ is a connected $\{ K_1 \cup \claw, Z_1 \}$-free graph
distinct from an odd cycle
and $G$ has at least $n$ vertices
where $n$ is given by Lemma~\ref{indep5}.
\item
$G$ is a connected $\{ kK_1 \cup K_2, Z_1\}$-free graph (where $k \geq 3$)
on at least $n$ vertices where $n$ is given by Lemma~\ref{applyRamsey}.
\item
$G$ is a $\{ kK_1 \cup K_2, K_3 \}$-free graph (where $k \geq 3$)
on at least $n$ vertices where $n$ is given by Lemma~\ref{applyRamsey}.
\item
$G$ is a $\{ 3K_1, \overline{kK_1 \cup K_2} \}$-free graph (where $k \geq 3$)
on at least $n$ vertices where $n$ is given by Lemma~\ref{applyRamsey}.
\item
$G$ is a $\{ 2K_1 \cup K_2, Z_1\}$-free graph
distinct from $C_5$.
\item
$G$ is a $\{ K_1 \cup P_3, Z_1\}$-free graph
distinct from $C_5$.
\item
$G$ is a $\{ K_1 \cup P_3, D \}$-free graph
distinct from $C_5$.
%
\end{enumerate}
\end{prop}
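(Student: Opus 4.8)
The plan is to run every one of the ten cases through the Strong Perfect Graph Theorem (Theorem~\ref{tSPGT}): a graph is perfect exactly when it has no induced odd hole $C_{2k+1}$ and no induced odd antihole $\overline{C_{2k+1}}$ with $k\geq 2$. For conditions (1) and (8)--(10) this is the whole engine. The first graph of each such pair, $2K_1\cup K_2$ or $K_1\cup P_3$, occurs as an induced subgraph of every odd hole $C_{2k+1}$ with $k\geq 3$, and the second graph, $D$ or $Z_1$, occurs in every odd antihole $\overline{C_{2k+1}}$ with $k\geq 3$; I would record these two facts once, each with an explicit four-vertex witness (e.g.\ $\{0,2,3,5\}$ induces $D$ and $\{0,1,2,4\}$ induces $Z_1$ inside $\overline{C_n}$). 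Consequently $X$-freeness kills long odd holes and $Y$-freeness kills long odd antiholes, so the sole remaining obstruction to perfectness is an induced $C_5$ (self-complementary, hence both the shortest hole and the shortest antihole).

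Conditions (3)--(7) bypass this bookkeeping entirely, reducing instead to bipartite or complete multipartite graphs, all of which are perfect. For (3), Olariu's Characterisation (Lemma~\ref{olariu}) splits a connected $Z_1$-free graph into the complete multipartite case and the $K_3$-free case, and in the latter Observation~\ref{bip} gives bipartiteness. For (4) the same split applies, and in the $K_3$-free branch the assumption of at least $R(5,3)$ vertices forces independence at least $5$ by Ramsey's Theorem (Theorem~\ref{Ramsey}), whence Lemma~\ref{indep5} makes $G$ a path, an even cycle, or a complete bipartite graph. Condition (5) uses Lemma~\ref{applyRamsey} in the $K_3$-free branch, (6) is Lemma~\ref{applyRamsey} verbatim, and (7) follows from (6) by complementation: $\{3K_1,\overline{kK_1\cup K_2}\}$-free complements to $\{K_3, kK_1\cup K_2\}$-free, and perfectness is complementation-invariant~\cite{L}.

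For the cases that genuinely keep an induced $C_5$, I must pin down what a class member containing $C_5$ can be. Condition (1) is the one place where this is substantial: since $D$ is an induced subgraph of $\overline{K_1\cup P_4}$, every $\{2K_1\cup K_2, D\}$-free graph is $\{2K_1\cup K_2,\overline{K_1\cup P_4}\}$-free, so Lemma~\ref{l4} applies and a graph containing $C_5$ is a prescribed blow-up of one of $G_1,\dots,G_{14}$. I would then run through these fourteen templates, discard blow-ups that create an induced $D$, and test perfectness of the survivors; the non-perfect ones should be exactly $C_5$ and the four graphs $E_1,\dots,E_4$. For (8)--(10) I expect the cleaner outcome that a class member containing an induced $C_5$ must \emph{equal} $C_5$: the component carrying the $C_5$ is $K_3$-free (by Lemma~\ref{olariu} in the $Z_1$ cases, and directly in the $D$ case since $C_5$ embeds in no complete multipartite graph), and a short neighbourhood analysis of any vertex attached to the cycle produces an induced $2K_1\cup K_2$ or $K_1\cup P_3$. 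Hence no proper extension survives, and $G=C_5$ is removed by hypothesis.

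The hard part will be condition (2). Here $K_1\cup P_3$-freeness again removes all odd holes of length at least $7$, but the shortest hole $C_5$ and \emph{every} odd antihole $\overline{C_{2k+1}}$ have independence exactly $2$; in particular $\overline{C_7}$ lies in the class, so these are not excluded as induced subgraphs by the hypothesis $\alpha(G)\geq 3$, and it is the \emph{global} (non-hereditary) constraint $\alpha(G)\geq 3$ that must do the work. My plan is to prove that a $\{K_1\cup P_3,\overline{K_3\cup P_4}\}$-free graph of independence at least $3$ contains no induced $C_5$ and no induced odd antihole at all. The lever is that $K_1\cup P_3$-freeness forces every induced $P_3$ to be dominating, so a fixed independent triple $\{a,b,c\}$ rigidly controls the adjacencies of all remaining vertices; combining this with $\overline{K_3\cup P_4}$-freeness and the explicit forbidden-subgraph description of induced subgraphs of $\overline{K_3\cup P_4}$ in Observation~\ref{cK3P4} (and the five-vertex catalogue of Observation~\ref{cK3P4on5}), I would argue that a putative induced $C_5$ or $\overline{C_{2k+1}}$ sitting beside an independent triple always manufactures one of the forbidden configurations. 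I anticipate this dominating-triple argument, together with the case analysis of how the independent triple can overlap the putative $C_5$ or antihole, to be the most delicate portion of the whole proof.
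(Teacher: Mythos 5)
Your plan is correct in substance, and on most items it coincides with the paper's own proof: items (3)--(6) via Olariu's characterisation together with Observation~\ref{bip}, Lemma~\ref{indep5} and Lemma~\ref{applyRamsey}, item (7) by complementation, and item (2) by exactly the argument the paper gives --- SPGT plus $K_1\cup P_3$-freeness leaves only an induced $C_5$ or odd antihole $A$, and then a fixed independent triple $I$ is shown, vertex by vertex, to be completely joined to $A$, so that $I$ together with an induced $P_4$ inside $A$ induces $\overline{K_3\cup P_4}$; the ``dominating triple'' case analysis you flag as the delicate part is precisely what the paper carries out, so your plan there is sound. The genuine differences are in (1) and (8)--(10). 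For (1) the paper does not route through Lemma~\ref{l4}: it argues directly that in a $\{2K_1\cup K_2, D\}$-free graph any one (resp.\ any two) vertices outside an induced $C_5$ extend it to $E_1$ or $E_2$ (resp.\ $E_3$ or $E_4$), so the graph has at most $7$ vertices; the paper explicitly notes that your route via the fourteen templates works too, but it costs the extra verification of which blow-ups remain $D$-free (essentially only blow-ups of the vertex $i$, since duplicating any $c_j$ creates a $D$ with its two nonadjacent cycle-neighbours). For (8)--(10) the paper avoids SPGT entirely: for (8) and (9), in the connected case $2K_1\cup K_2$-freeness (resp.\ $K_1\cup P_3$-freeness) implies $\claw^+$-freeness and one reduces to item (3), while in the disconnected case every component is $K_1\cup K_2$-free (resp.\ $P_3$-free), hence the graph is $P_4$-free and perfect by Seinsche's theorem; item (10) is then the complement of (8). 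Your SPGT-plus-attachment analysis for these items also works, but with two repairs: your claim that the component carrying the $C_5$ is $K_3$-free ``directly in the $D$ case'' is unjustified --- there is no Olariu-type statement for $D$-free graphs --- so for (10) you must either also handle attached vertices with two adjacent neighbours on the $C_5$ (these do force an induced $D$ or $K_1\cup P_3$, but it needs checking) or, more cleanly, deduce (10) from (8) by complementation exactly as you treat (7); and since (8)--(10) do not assume connectivity, you also need the easy observation that a vertex with no neighbour on the $C_5$ yields $2K_1\cup K_2$ or $K_1\cup P_3$.
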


\begin{figure}[h!]
    \centering
    \includegraphics[scale=0.7]{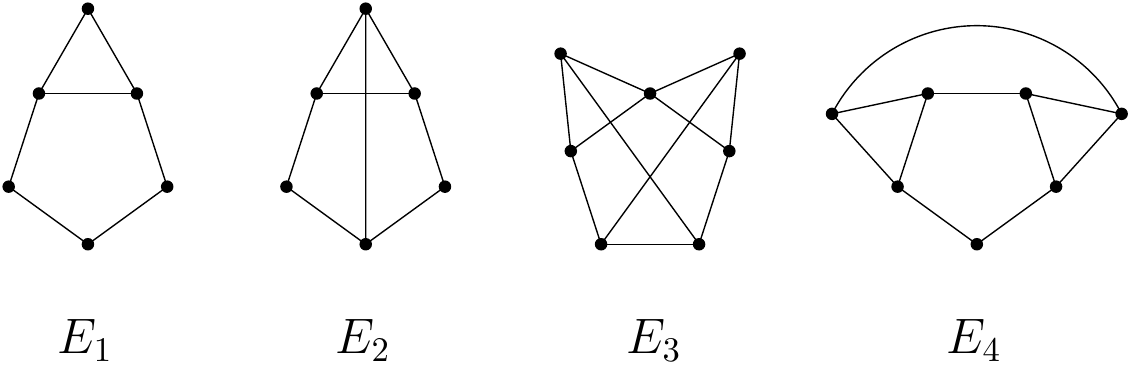}
    \caption{Graphs $E_1, \ldots, E_4$.
    We note that $E_1$ is, in fact, the complement of $E_2$,
    and $E_3$ is the complement of $E_4$.
    (The graphs are also included in Figure~\ref{figStructureOfG}.
    Namely, $E_1, E_2, E_3, E_4$ is isomorphic to $G_2, G_5, G_9, G_{10}$, respectively.)}
    \label{figE}
\end{figure}

For conditions (3), (4) \dots, (10)
we argue using the structural statements shown in Section~\ref{sForb},
and for (1) and (2) using the following result shown in~\cite{CRST}.

\begin{theorem}[Strong Perfect Graph Theorem]
\label{tSPGT}
A graph is perfect if and only if neither the graph nor its complement
contain an induced cycle whose length is odd and at least $5$.
\end{theorem}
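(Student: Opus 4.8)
The plan is to prove the two implications separately. The forward direction --- that perfectness forces the absence of odd holes and odd antiholes --- is the easy half. One checks directly that an odd cycle $C_{2k+1}$ with $k \geq 2$ has clique number $2$ but chromatic number $3$, and that its complement has clique number $k$ but chromatic number $k+1$; since each of these would appear as an induced subgraph, a graph containing an odd hole or odd antihole cannot satisfy $\chi = \omega$ on all induced subgraphs and so is not perfect. I would dispose of this direction in a line.

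The substance is the reverse implication: every \emph{Berge} graph (one containing no induced odd cycle of length at least $5$ in itself or its complement) is perfect. I would argue by contradiction, assuming a minimal imperfect Berge graph $G$ exists --- imperfect, but with every proper induced subgraph perfect (note the class of Berge graphs is hereditary, so a minimal counterexample has this form). Standard properties of minimally imperfect graphs, following Lov\'asz, then constrain $G$ sharply: both $G$ and $\overline{G}$ are connected, $|V(G)| = \alpha(G)\,\omega(G) + 1$, every vertex lies in a maximum clique and in a maximum stable set, and so on. I would record these as the toolkit controlling $G$.

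The heart of the argument is a structural decomposition theorem: every Berge graph is either \emph{basic} --- bipartite, the line graph of a bipartite graph, the complement of one of these, or a double split graph --- or else $G$ or $\overline{G}$ admits a $2$-join, or $G$ admits a balanced skew partition. I would establish this by a long case analysis organised around which configurations $G$ contains: the presence of a proper wheel, of a long prism, or of a suitable line-graph-like substructure each forces one of the decompositions, and when all such configurations are absent one argues that $G$ must be basic. Completing this dichotomy is by far the main obstacle; essentially all of the difficulty lies here, and it demands delicate tracking of how holes and antiholes propagate through the graph under each configuration.

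With the decomposition in hand, I would close the loop using three ``gluing'' facts, each showing that a minimal imperfect graph cannot exhibit the corresponding feature. First, every basic class is perfect: bipartite graphs and line graphs of bipartite graphs by K\H{o}nig-type arguments, their complements then by the theorem of Lov\'asz that complementation preserves perfectness, and double split graphs by direct verification. Second, no minimal imperfect graph admits a $2$-join --- here I would invoke the result of Cornu\'ejols and Cunningham that the $2$-join composition preserves perfectness --- and hence, by complementation, neither does $\overline{G}$. Third, and most delicately, no minimal imperfect Berge graph admits a balanced skew partition, the partial resolution of Chv\'atal's skew-partition conjecture that suffices here. Combining these, the hypothetical minimal imperfect Berge graph $G$ can be neither basic nor subject to any listed decomposition, contradicting the decomposition theorem. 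Therefore no such $G$ exists, every Berge graph is perfect, and the equivalence follows.
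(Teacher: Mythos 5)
The paper does not prove Theorem~\ref{tSPGT} at all: it is recalled as an external result with a citation to~\cite{CRST}, where it serves purely as a black box for Propositions~\ref{perf} and~\ref{omega}. So there is no internal proof to compare against, and the appropriate ``proof'' in this context is the citation itself. Your outline is, in fact, an accurate pr\'ecis of the actual proof in~\cite{CRST}: the easy forward direction is handled correctly (the computations $\chi(C_{2k+1})=3>2=\omega(C_{2k+1})$ and $\chi(\overline{C_{2k+1}})=k+1>k=\omega(\overline{C_{2k+1}})$ are right), the reduction to a minimal imperfect Berge graph together with the Lov\'asz toolkit (including $|V(G)|=\alpha(G)\omega(G)+1$) is standard and sound, and your three gluing facts carry the correct attributions: K\H{o}nig-type arguments for the basic classes, Lov\'asz's perfect graph theorem~\cite{L} for complementation, Cornu\'ejols--Cunningham for $2$-joins, and the exclusion of balanced skew partitions in minimal imperfect Berge graphs (the restricted form of Chv\'atal's conjecture) as the delicate step.

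As a proof, however, this is a roadmap rather than an argument. The entire content of the theorem sits in the decomposition dichotomy, which you dispatch with ``a long case analysis organised around which configurations $G$ contains''; that case analysis is essentially the whole of the roughly $150$ pages of~\cite{CRST}, and nothing in your sketch engages with how proper wheels, long prisms, or line-graph-like substructures actually force the decompositions, nor with the balanced-skew-partition exclusion, which is itself a substantial theorem and not merely an invocation. One further inaccuracy: the decomposition theorem as proved in~\cite{CRST} has an additional outcome, the homogeneous pair ($M$-join), which your statement of the dichotomy omits; the omission is legitimate only via Chudnovsky's later result that homogeneous pairs can be eliminated, so the dichotomy you state is not the one proved in the cited reference without that extra ingredient. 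In short: correct architecture, correct attributions, but the proposal asserts rather than proves the theorem's substance --- which, to be fair, is all that can reasonably be expected for a statement the paper itself only quotes.
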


\begin{proof}[Proof of Proposition~\ref{perf}]
We let $\CC$ denote the family of all cycles whose length is odd and at least $7$,
and $\overline{\CC}$ denote the family of all graphs whose complement belongs to $\CC$.
We show that each of the conditions (1), (2), \dots, (10)
implies that $G$ is perfect.

First, we suppose that condition (1) is satisfied.
For the sake of a contradiction, we suppose that $G$ is not perfect.
By Theorem~\ref{tSPGT},
$G$ contains a graph from
$\{C_5\} \cup \CC \cup \overline{\CC}$ as an induced subgraph.
Furthermore, we note that $G$ is $\CC \cup \overline{\CC}$-free
(since no graph of $\CC$ is $2K_1 \cup K_2$-free
and no graph of $\overline{\CC}$ is $D$-free),
and hence, $G$ contains $C_5$ as an induced subgraph.
(Now, the statement can be deduced using Lemma~\ref{l4}.
For the sake of clarity, we give a short proof not using the lemma.)
We consider a set $C$ of vertices inducing $C_5$ in $G$.
We note that every vertex of $G$ is adjacent to a vertex of $C$
(since $G$ is $2K_1 \cup K_2$-free),
and that $G$ contains a vertex not belonging to $C$
(since $G$ is distinct from $C_5$).
Considering such vertex $x$,
we observe that the graph induced by $C \cup \{x\}$
is either $E_1$ or $E_2$ (since $G$ is $\{ 2K_1 \cup K_2, D \}$-free).

In addition, we consider two vertices, say $x$ and $x'$, not belonging to $C$,
and we discuss the graph induced by $C \cup \{x, x'\}$
(see Figure~\ref{figCcupxx}).
We observe that $C \cup \{x, x'\}$ induces either $E_3$ or $E_4$.
\begin{figure}[h!]
    \centering
    \includegraphics[scale=0.7]{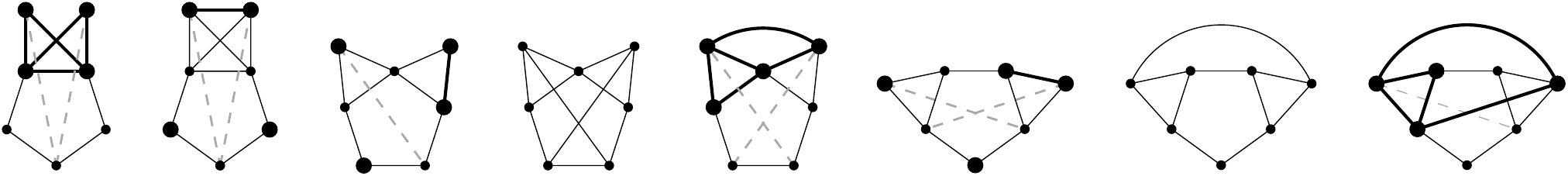}
    \caption{Possible ways of connecting $x$ and $x'$ to $C$
    (the edges depicted as dashed grey may or may not be present in the graph).
    Some of the resulting graphs contain induced $2K_1 \cup K_2$ or induced $D$
    (highlighted in the picture by depicting the vertices and edges as bold).}
    \label{figCcupxx}
\end{figure}
Clearly, this reasoning applies to every pair of vertices not belonging to $C$,
and thus $G$ has at most $7$ vertices.
We conclude that $G$ is one of the graphs $E_1, \dots, E_4$, a contradiction.

We suppose that (2) is satisfied.
For the sake of a contradiction, we suppose that
$G$ is not perfect, that is, contains a graph from
$\{C_5\} \cup \CC \cup \overline{\CC}$ as an induced subgraph (by Theorem~\ref{tSPGT}).
Clearly, $G$ is $\CC$-free (since it is $K_1 \cup P_3$-free).
Hence, $G$ contains a graph from $\{C_5\} \cup \overline{\CC}$
as an induced subgraph;
and let $A$ be a set of vertices inducing such subgraph.
We note that every vertex of $G$ is adjacent to a vertex of~$A$
(since $G$ is $K_1 \cup P_3$-free).

In addition, we consider a set $I$ of three independent vertices of $G$,
and a pair $N$ of non-adjacent vertices of $A$.
For every vertex $x$ of $I \sm N$,
we observe that $x$ is adjacent to at least one vertex of $N$
(otherwise,
there would be a pair $N'$ of non-adjacent vertices of $A \sm N(x)$ so that
there is a vertex of $N(x)$ which is adjacent to precisely one vertex of $N'$,
contradicting the assumption that $G$ is $K_1 \cup P_3$-free).
Consequently, we note that no vertex of $N$ belongs to $I$
(since $G$ is $K_1 \cup P_3$-free).
In particular, a vertex of $N$ is adjacent
to at least two vertices of $I$,
and hence to all vertices of $I$ (since $G$ is $K_1 \cup P_3$-free).
It follows that the other vertex of $N$ is
also adjacent to all vertices of $I$. 
We recall that the choice of $N$ was arbitrary,
and thus every vertex of $A$ is, in fact, adjacent to all vertices of $I$.
We consider a subset, say $P$, of $A$ inducing $P_4$,
and we conclude that $I \cup P$ induces $\overline{K_3 \cup P_4}$,
a contradiction. 

We suppose that (3) or (4) or (5) is satisfied.
In either case, $G$ is $K_3$-free or complete multipartite
by Lemma~\ref{olariu}.
Furthermore if $G$ is $K_3$-free, then we get that it is bipartite
(we apply Observation~\ref{bip} or Lemma~\ref{indep5}
or Lemma~\ref{applyRamsey}, respectively).
Clearly, every bipartite or complete multipartite graph is perfect.

Similarly, for the case that (6) is satisfied,
we get that $G$ is bipartite (by Lemma~\ref{indep5}),
and thus perfect.

We suppose that (7) is satisfied.
We note that the complement of $G$ is $\{ kK_1 \cup K_2, K_3 \}$-free,
and so it satisfies condition (6).
We recall that a graph is perfect if and only if its complement is perfect
(by~\cite{L} or by Theorem~\ref{tSPGT}), and the perfectness of $G$ follows.

We suppose that (8) is satisfied.
If $G$ is connected, then we note that condition (3) is satisfied
and the statement follows.
Otherwise,
we get that every component of $G$ is $K_1 \cup K_2$-free.
In particular, $G$ is $P_4$-free, and we recall that this implies perfectness
(for instance, by~\cite{Seinsche} or by Theorem~\ref{tSPGT}).
A similar argument applies for the case that condition (8) is satisfied.

Finally, if (9) is satisfied, then
we note that the complement of $G$ satisfies (8),
and the statement follows.
%
%
%
%
\end{proof}
\section{Showing $\omega$-colourability}
\label{sOmega}

Considering sufficient conditions for $\omega$-colourability,
we show the following.
\begin{prop}
\label{p1}
Every $\{ 2K_1 \cup K_2, \overline{K_1 \cup P_4} \}$-free graph
of independence at least~$3$
is $\omega$-colourable.
\end{prop}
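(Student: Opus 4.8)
The plan is to split according to whether $G$ contains an induced $C_5$. First I would dispose of the case that $G$ is $C_5$-free by showing that $G$ is then perfect, hence $\omega$-colourable. Let $\CC$ denote the family of all cycles of odd length at least $7$ and let $\overline{\CC}$ denote the family of their complements, as in the proof of Proposition~\ref{perf}. As noted there, no member of $\CC$ is $2K_1 \cup K_2$-free, so $G$ is $\CC$-free. Moreover, for every odd $\ell \geq 7$ the cycle $C_\ell$ contains an induced $K_1 \cup P_4$ (take four consecutive vertices together with one vertex adjacent to none of them, which exists since $\ell \geq 7$), and therefore each member of $\overline{\CC}$ contains an induced $\overline{K_1 \cup P_4}$; thus $G$ is $\overline{\CC}$-free. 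Finally $\overline{C_5} = C_5$, so in this case $G$ avoids every graph of $\{C_5\} \cup \CC \cup \overline{\CC}$, and Theorem~\ref{tSPGT} gives that $G$ is perfect.

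It remains to treat the case that $G$ contains an induced $C_5$. Here I would invoke Lemma~\ref{l4}: $G$ arises from one of the finitely many graphs $G_1, \dots, G_{14}$ of Figure~\ref{figStructureOfG} by blowing up the vertex $i$ to an independent set and each of $c_1, \dots, c_5$ to a complete graph. Writing $B_v$ for the bag replacing a vertex $v$ of the base graph, the clique number of $G$ equals the maximum, over cliques $K$ of the base graph, of $\sum_{v \in K} |B_v|$, since each clique-bag contributes its full size while the independent bag $B_i$ and any non--blown-up vertex contribute at most $1$; this is a finite computation for each base graph. I would then exhibit an explicit proper colouring of $G$ with $\omega(G)$ colours.

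The main obstacle is this colouring step, because a blow-up of $C_5$ is in general \emph{not} $\omega$-colourable: the bare $C_5$ already has $\chi = 3 > 2 = \omega$, and more generally the blow-up of $C_5$ with clique-bags of sizes $a_1, \dots, a_5$ has chromatic number $\max\bigl(\max_j (a_j + a_{j+1}),\, \lceil \tfrac12 \sum_j a_j \rceil\bigr)$, whose second term can exceed $\omega$ when the bags are balanced. The role of the hypothesis $\alpha(G) \geq 3$ is exactly to exclude this: if only the $c_j$ are expanded, a maximum independent set meets each clique-bag in at most one vertex and $C_5$ has independence $2$, so $\alpha(G) \geq 3$ forces the independent bag $B_i$ to be non-trivial (and, for some base graphs, forces further structure to be present). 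This same extra structure raises $\omega(G)$ so as to absorb the deficit: for instance, when $i$ is joined to three consecutive bags $B_{c_1}, B_{c_2}, B_{c_3}$ one obtains $\omega(G) \geq 1 + \max(a_1 + a_2,\, a_2 + a_3)$, supplying precisely the colour that the odd cycle would otherwise lack. Concretely, I would colour the five bags of the $C_5$ in a near-balanced cyclic pattern and then colour $B_i$ using colours avoided on its neighbourhood, and verify this gives an $\omega(G)$-colouring; the red vertices present in the base graphs only contribute further triangles that raise $\omega$ and are handled the same way. The finite but somewhat tedious verification across the fourteen base graphs $G_1, \dots, G_{14}$, checking in each that the cliques forced to exist by $\alpha(G) \geq 3$ carry enough colours, is where the real work lies.
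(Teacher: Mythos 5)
Your first case is correct and is exactly the paper's opening move: since every odd hole of length at least $7$ contains an induced $2K_1 \cup K_2$ and its complement contains an induced $\overline{K_1 \cup P_4}$ (because $C_\ell$ with $\ell \geq 7$ contains $K_1 \cup P_4$), a $C_5$-free graph in this class is perfect by Theorem~\ref{tSPGT}. The gap lies in the main case, and not only because you explicitly defer the verification over the base graphs of Lemma~\ref{l4} as ``where the real work lies''. The mechanism you sketch for that verification is quantitatively insufficient, so the plan as described would fail. For a blow-up of $C_5$ with clique-bags of sizes $a_1, \dots, a_5$ one has, as you say, $\chi = \max\bigl(\max_j (a_j + a_{j+1}),\, \lceil \tfrac12 \textstyle\sum_j a_j \rceil\bigr)$; for balanced bags $a_1 = \dots = a_5 = a$ the second term exceeds the first by roughly $a/2$, a deficit that grows with $a$, whereas your proposed remedy $\omega(G) \geq 1 + \max(a_1 + a_2,\, a_2 + a_3)$ supplies only \emph{one} extra colour. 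Concretely, take $a_1 = \dots = a_5 = 4$ and a non-trivial independent bag $B_i$ joined to $B_{c_1} \cup B_{c_2} \cup B_{c_3}$: then $\omega(G) = 9$ while the cycle blow-up alone requires $10$ colours, so no near-balanced cyclic colouring absorbed by one additional colour can exist.

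What saves the proposition---and what is missing from your argument---is that such configurations are not in the class at all: if $|B_i| \geq 2$, then two vertices of $B_i$ together with any edge of $G$ avoiding $N(B_i)$ induce $2K_1 \cup K_2$, so the common non-neighbourhood of $B_i$ must be edgeless; in particular the bags far from $i$ cannot carry edges, and the balanced obstruction above simply cannot coexist with a non-trivial independent bag. This structural restriction, which is what the base graphs $G_2, \dots, G_{14}$ of Lemma~\ref{l4} actually encode, rather than any boost to $\omega$, is the reason the required colourings exist; your proposal never establishes it. (Relatedly, the paper uses $\alpha(G) \geq 3$ to exclude the base graph $G_1$, not to force $B_i$ to be blown up.) Finally, you may want to adopt the paper's reduction, which makes the case analysis tractable: it adds edges to $G$ (in the notation of the proof of Lemma~\ref{l4}, making every vertex of $R_e$ adjacent to all vertices of $O$) so that the resulting graph $G^+$ is a blow-up of one of only $G_5$, $G_9$, $G_{12}$, observes that $\omega(G^+) = \omega(G)$, and then exhibits a single explicit colouring in each of these three cases; any proper colouring of $G^+$ restricts to a proper colouring of $G$.
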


\begin{prop}
\label{omega}
Let $k, \ell$ and $n$ be integers so that $k \geq 3$ and $\ell \geq 2$,
and $G$ be a graph on at least $n$ vertices.
If some of the following conditions is satisfied,
then $G$ is $\omega$-colourable.
\begin{enumerate}
\item
$G$ is $\{ kK_1 \cup K_2, Z_1 \}$-free and $n$ is sufficiently large.
For instance, we can choose
$n = (k - 1) \left( R \left( k - 1, 3 \right) + \frac{8k(k^2-1)}{2k + 1} + 2k \right) + 2$.
\item
$G$ is $\{ kK_1 \cup K_2, D \}$-free and $n$ is sufficiently large.
For instance, $n = R(2k, R(k, k) + k)$ will do.
\item
$G$ is $\{ (k+1)K_1, \overline{\ell K_1 \cup K_2} \}$-free and $n$ is sufficiently large.
For instance, $n = R(k+1, R(k, k(\ell - 1)) + k(\ell - 1))$ will do.
\end{enumerate}
\end{prop}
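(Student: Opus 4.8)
The plan is to treat all three items by the same high-level strategy: use Ramsey's Theorem (Theorem~\ref{Ramsey}) to locate a large clique $Q$ in $G$, so that $\omega(G)$ is large; exploit the second forbidden subgraph to show that every vertex outside $Q$ is joined to only a bounded number of vertices of $Q$; and finally colour $G$ with $\omega(G)$ colours, absorbing any badly behaved (e.g.\ non-perfect) leftover into the large clique number. I expect the first two steps to be routine applications of the lemmas of Section~\ref{sForb}, and the concluding colouring step to be the crux.

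For item (1) I would first invoke Olariu's Characterisation (Lemma~\ref{olariu}) to split $G$ into the union $M$ of its complete-multipartite components and the union $N$ of the remaining (hence $K_3$-free) components, exactly as in Corollary~\ref{applyRamsey2}. Writing $m = R(k-1,3)+\frac{8k(k^2-1)}{2k+1}+2k$ for the bound of Lemma~\ref{applyRamsey} (so that the threshold there is $m+2$), the stated threshold is $n=(k-1)m+2$. If the alternative ``$N$ is bipartite'' of Corollary~\ref{applyRamsey2} holds, then $M$ and $N$ are both perfect, so $G$ is perfect and in particular $\omega$-colourable. Otherwise $N$ has at most $m+1$ vertices; since $G$ has at least $(k-1)m+2$ vertices, $M$ has at least $(k-2)m+1$ vertices, which is exactly the threshold $n(k-2)-2k+5$ of Corollary~\ref{applyRamsey2}, so $M$ contains $K_{m+1}$. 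Then $\omega(G)\ge\omega(M)\ge m+1\ge |V(N)|\ge\chi(N)$, while $\chi(M)=\omega(M)\le\omega(G)$ since $M$ is complete multipartite; hence $\chi(G)=\max\{\chi(M),\chi(N)\}\le\omega(G)$, as required.

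For items (2) and (3) the large clique comes from the outer Ramsey application. In item (3) the graph is $(k+1)K_1$-free, so $\alpha(G)\le k$ and Ramsey forces a clique $Q$ of size $R(k,k(\ell-1))+k(\ell-1)$ outright. Here $\overline{\ell K_1\cup K_2}=K_{\ell+2}-e$, and I would use the reformulation that $G$ is $\overline{\ell K_1\cup K_2}$-free iff the common neighbourhood of any two non-adjacent vertices contains no $K_\ell$; applying this to a vertex $v\notin Q$ and a vertex of $Q$ not adjacent to $v$ (one exists as $Q$ is maximum), and noting that $N(v)\cap Q$ is a clique of common neighbours, gives $|N(v)\cap Q|\le\ell-1$. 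In item (2) the graph is $D$-free, so each neighbourhood induces a disjoint union of cliques; this yields both that every vertex outside a maximum clique $Q$ sees at most one vertex of $Q$ (two seen and one missed would put a non-adjacent pair in the common neighbourhood of an edge, forcing a $D$), and, since a neighbourhood with a nontrivial component and at least $k$ components produces $kK_1\cup K_2$, that $G$ has at most $k$ components once it has an edge. The clique itself is supplied by the outer step $R(2k,R(k,k)+k)$, the independent-set alternative being controlled through the $kK_1\cup K_2$-freeness and the component bound.

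The main obstacle is the concluding colouring step: turning ``a large clique $Q$ with sparse attachments'' into a proper $\omega(G)$-colouring. The difficulty is that the leftover $H=G-Q$ need not be perfect and need not have bounded chromatic number (for instance $2K_\omega$ and $K_\omega\sqcup C_5$ are legitimate configurations), so one cannot simply colour $H$ cheaply and glue. I would instead argue that, because $\omega(G)$ exceeds all the relevant bounds, each part of $H$ is either bipartite or complete multipartite (hence perfect) by the lemmas above, or is itself governed by a clique no larger than $\omega(G)$, and that the at most $\ell-1$ (respectively one) forbidden colours per vertex leave enough slack to extend the colouring of $Q$ in a suitable order; any non-perfect small piece then contributes chromatic number below $\omega(G)$ and is absorbed. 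Verifying that these local colourings can be made globally consistent with a single palette of size $\omega(G)$---in particular across several large cliques in the disconnected case of item (2)---is where the real work lies.
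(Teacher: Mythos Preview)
Your treatment of item~(1) is correct and matches the paper's argument via Corollary~\ref{applyRamsey2} essentially verbatim.

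For items~(2) and~(3), however, there is a genuine gap at exactly the point you flag. You correctly observe that $G-Q$ may contain further cliques of size $\omega$ (your example $2K_\omega$), and then propose to handle this by saying each piece of $H=G-Q$ is ``either bipartite or complete multipartite \dots\ or is itself governed by a clique no larger than $\omega(G)$'', with a greedy extension using the $\ell-1$ forbidden colours per vertex. None of this works: the structural dichotomy you invoke comes from Lemma~\ref{olariu} and applies only under $Z_1$-freeness (item~(1)), not under $D$- or $\overline{\ell K_1\cup K_2}$-freeness; and greedy extension fails precisely because $|V(G)\setminus Q|$ need not be smaller than $\omega$. You have identified the obstacle but not a mechanism to overcome it.

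The paper's mechanism (Lemma~\ref{l5}) is to iterate your first step: peel off maximum cliques $V_1,V_2,\dots,V_p$ until what remains is $K_{k(\ell-1)}$-free. The $kK_1\cup K_2$-freeness then forces $p\le k$ and makes the final remainder $(k-p+1)K_1$-free, hence of bounded size by Ramsey. The colouring is built inductively: given an $\omega$-colouring of $V_1\cup\dots\cup V_{i-1}$, one extends it to $V_i$ by setting up the bipartite graph between $V_i$ and the colour set (a vertex is joined to a colour if no neighbour already uses it) and verifying Hall's condition. Your degree bound $|N(v)\cap V_j|\le\ell-1$ gives high minimum degree on one side; Claim~\ref{cb} (if many vertices of $V_1\cup\dots\cup V_{i-1}$ each see $\ge\ell$ vertices of $V_i$, they form a clique together with $V_i$) controls the other side. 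The small leftover is then absorbed greedily. Your handling of the independent-set branch in item~(2) is also too vague: the paper shows directly that an independent set $I$ of size $2k$ forces $G$ to be bipartite, since any two vertices outside $I$ share at least two neighbours in $I$ and are therefore non-adjacent by $D$-freeness.
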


The proofs of Propositions~\ref{p1} and~\ref{omega} are given below.
For Proposition~\ref{p1}, the proof follows easily
by using Theorem~\ref{tSPGT}
and the `only if part' of Lemma~\ref{l4}
(a similar reasoning and deeper results can be found in
the paper of Karthick and Maffray~\cite{KM}).
For each item of Proposition~\ref{omega},
we examine the structure of a major part of a graph and colour it,
and we show that the remaining part is small and the colouring extends.
For condition (1), the statement follows from Corollary~\ref{applyRamsey2}.
For conditions (2) and (3), we repeatedly apply Theorem~\ref{Ramsey}.
The core of the proof is to show the following lemma on a slightly more general class of graphs.

\begin{lemma}
\label{l5}
Let $k \geq 3$ and $\ell \geq 2$
and $G$ be a $\{ kK_1 \cup K_2, \overline{\ell K_1 \cup K_2} \}$-free graph
and $\omega$ be its clique number.
If $\omega$ is sufficiently large,
then $G$ is $\omega$-colourable.
For instance, $\omega \geq R(k, k(\ell - 1)) + k(\ell - 1)$ will do.
\end{lemma}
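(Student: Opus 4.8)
The plan is to prove Lemma~\ref{l5} by analysing the structure forced by the two forbidden induced subgraphs and then producing an explicit colouring using $\omega$ colours. The key observation is that $\{kK_1 \cup K_2, \overline{\ell K_1 \cup K_2}\}$-freeness controls both the graph and its complement simultaneously: the condition $kK_1 \cup K_2$-free limits how large an independent set can coexist with an edge, and dually $\overline{\ell K_1 \cup K_2}$-free limits how a clique interacts with a non-edge. The guiding idea is that, when $\omega$ is large, the graph must consist of a large clique together with a small ``remainder'' that can be coloured cheaply.

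First I would fix a maximum clique $Q$ with $|Q| = \omega$ and classify the vertices of $V(G) \sm Q$ by their adjacency to $Q$. Since $G$ is $\overline{\ell K_1 \cup K_2}$-free, no vertex $v$ outside $Q$ can be non-adjacent to too many vertices of $Q$ at once in the wrong configuration; more precisely, a vertex missing at least some bounded number of vertices of $Q$ would, together with a suitable non-edge inside the complement, create $\overline{\ell K_1 \cup K_2}$. The plan is to show that every vertex of $V(G) \sm Q$ is adjacent to all but at most $k(\ell-1)$ vertices of $Q$, so that each outside vertex ``misses'' only a bounded set $M(v) \subseteq Q$ with $|M(v)| \le k(\ell - 1)$. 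This is where the threshold $\omega \ge R(k, k(\ell-1)) + k(\ell-1)$ enters: it guarantees $Q$ is large enough that the missed sets are genuinely small compared to $Q$, leaving a large common neighbourhood.

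Next I would use the $kK_1 \cup K_2$-freeness to bound the independence number of the subgraph induced on $V(G) \sm Q$ (or on a relevant part of it), applying Ramsey's Theorem (Theorem~\ref{Ramsey}) to the structure of the outside vertices. The term $R(k, k(\ell-1))$ in the bound on $\omega$ suggests that one applies Ramsey to find either a large independent set (which, combined with an edge, would give $kK_1 \cup K_2$, a contradiction) or a clique of size $k(\ell-1)$ among vertices sharing some structure, forcing the outside part to be small. The colouring is then assembled greedily: colour the large clique $Q$ with $\omega$ colours, and argue that each outside vertex $v$ can reuse one of the colours assigned to $M(v)$, since $v$ is non-adjacent precisely to the vertices of $M(v)$ and these carry distinct colours; the bounded interaction among outside vertices (again from the forbidden pairs) ensures no conflict accumulates, so $\omega$ colours suffice.

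The main obstacle, I expect, is the extension step: showing that the colours freed up inside $Q$ by each outside vertex's missed set $M(v)$ can be distributed consistently across all outside vertices without running out. Two outside vertices $u, v$ that are adjacent must receive different colours, and one must verify that the constraints coming from $\overline{\ell K_1 \cup K_2}$-freeness on the joint adjacency of $u$ and $v$ to $Q$ leave enough room; the delicate case is when many outside vertices share overlapping missed sets, which is exactly where the quantitative slack $R(k, k(\ell-1)) + k(\ell-1)$ must be spent carefully. I would handle this by arguing that the outside vertices, once grouped by their behaviour on $Q$, induce a graph of bounded clique number and bounded independence number, hence bounded order by Ramsey, so that the whole remainder outside a colour-saturated core is small enough to be absorbed into the $\omega$ available colours.
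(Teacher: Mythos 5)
Your proposal has two genuine gaps, and the first is fatal: your central structural claim points in exactly the wrong direction. Let $Q$ be a maximum clique and $v \notin Q$. By maximality of $Q$, the vertex $v$ has a non-neighbour $w \in Q$. If $v$ had at least $\ell$ neighbours in $Q$, those $\ell$ neighbours form a clique (they lie in $Q$), and together with the non-adjacent pair $\{v, w\}$, both of which are complete to those $\ell$ vertices, they induce $\overline{\ell K_1 \cup K_2}$. Hence $\overline{\ell K_1 \cup K_2}$-freeness forces every vertex outside $Q$ to have \emph{at most} $\ell - 1$ neighbours in $Q$; it misses all but a bounded number of vertices of $Q$, rather than (as you claim) missing at most $k(\ell-1)$ of them. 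Your colouring step --- each outside vertex reuses one of the few colours on its missed set $M(v)$ --- is therefore built on a configuration that cannot occur when $\omega$ is large.

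The second gap is your assertion that the outside part is small. The $kK_1 \cup K_2$-freeness does bound the independence number of $G - Q$ by $k-1$ (an independent $k$-set outside $Q$ has at most $k(\ell-1)$ neighbours in $Q$ by the observation above, so an edge of $Q$ avoiding those neighbours yields an induced $kK_1 \cup K_2$), but bounded independence number does not make $G - Q$ small --- it makes it clique-rich. For instance, the disjoint union of two cliques of size $\omega$ is $\{ kK_1 \cup K_2, \overline{\ell K_1 \cup K_2} \}$-free, and there $G - Q$ is itself a clique of size $\omega$. This is precisely why the paper's proof does not stop after one clique: it iteratively extracts maximum cliques $V_1, \dots, V_p$, stopping when the remainder is $K_{k(\ell-1)}$-free (hence of order less than $R(k, k(\ell-1))$ by Ramsey's Theorem, with $p \leq k$); it shows every vertex outside $V_i$ has at most $\ell-1$ neighbours in $V_i$; and it extends an $\omega$-colouring clique by clique via Hall's theorem, where the Hall condition comes from the observation that any set $X$ of earlier vertices each having at least $\ell$ neighbours in $V_i$ forms a clique together with $V_i$, so $|X| \leq \omega - |V_i|$. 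The small remainder is then coloured greedily. Your proposal contains no mechanism for consistently colouring several interacting large cliques, which is the actual crux of the lemma.
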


We start by showing Proposition~\ref{p1}.

\begin{proof}[Proof of Proposition~\ref{p1}]
We let $G$ be a $\{ 2K_1 \cup K_2, \overline{K_1 \cup P_4} \}$-free graph 
of independence at least~$3$.
Clearly, we can assume that $G$ is not perfect.
Combining Theorem~\ref{tSPGT}
and the fact that $G$ is $\{ 2K_1 \cup K_2, \overline{K_1 \cup P_4} \}$-free,
we note that $G$ contains $C_5$ as an induced subgraph;
and we consider a set, say $C$, of vertices inducing it.

We recall the blow-up process described in Lemma~\ref{l4}
and note that $G$ can be obtained 
from some of the graphs $G_2, \dots, G_{14}$
(since $G$ is of independence at least $3$).
To reduce the number of cases, we observe that $G$ can be extended by adding edges
so that the resulting graph
can, in fact, be obtained from $G_5, G_9$ or $G_{12}$
(recalling the notation used in the proof of Lemma~\ref{l4},
we add edges so that every vertex of $R_e$ is adjacent to all vertices of $O$
for a particular choice of $e$).
We let $G^+$ denote the resulting extended graph,
and we observe that $\omega(G) = \omega(G^+)$.
Hence, it is sufficient to
find an $\omega$-colouring of $G^+$.

\begin{figure}[h!]
    \centering
    \includegraphics[scale=0.7]{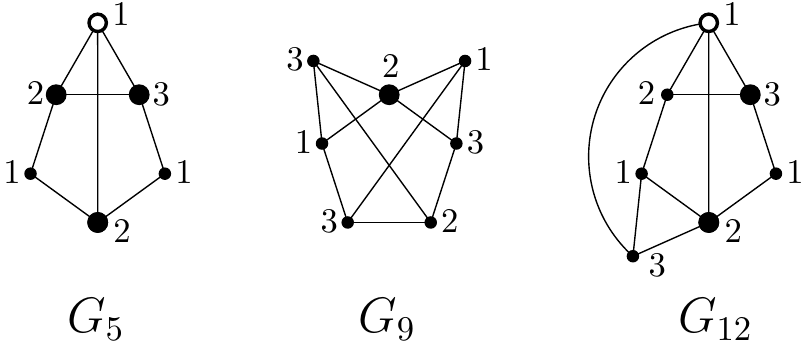}
    \caption{Possible structure of graph $G^+$ given by graphs $G_5, G_9, G_{12}$, 
    and particular $3$-colourings of these graph.}
    \label{figColouring}
\end{figure}

To this end, we consider a set of vertices of $G^+$
inducing $G_5, G_9$ or $G_{12}$
(a subgraph giving the structure of $G$).
We colour the vertices as indicated in Figure~\ref{figColouring},
and we extend this to a proper colouring of $G^+$ as follows.
We recall that vertex $i$ is blown-up to an independent set
and each of vertices $c_1, c_2$ and $c_3$ is blown-up to a complete graph,
and we refer to the new vertices as clones of $i, c_1, c_2$ and $c_3$, respectively. 
We colour the clones of $i$ using colour $1$,
and the clones of $c_1$ and $c_2$ using colours $4, \dots, \omega$,
and the clones of $c_3$ (for the case of graph $G_5$) using colours $3, \dots, \omega$.
We conclude that this yields an $\omega$-colouring of $G^+$.
\end{proof}

We show Lemma~\ref{l5}.

\begin{proof}[Proof of Lemma~\ref{l5}]
\setcounter{claim}{0}
\setcounter{claimprefix}{\getrefnumber{l5}}
We let $m = k(\ell - 1)$ and we suppose that $\omega \geq R(k, m) + m$.
We consider a set of vertices, say $V_1$,
inducing a maximum complete subgraph of $G$
(clearly, $|V_1| = \omega$),
and we check whether the graph $G - V_1$ contains $K_m$ as a subgraph.
If it does, then we consider a set of vertices, say $V_2$,
inducing a maximum complete subgraph in $G - V_1$
and we proceed by checking the graph $G - V_1 - V_2$ for $K_m$.
We continue this process until we obtain sets $V_1, \dots, V_p$
so that the graph $G - V_1 - \dots - V_p$ is $K_m$-free
(and $|V_p| \geq m$).

We show two claims.
\begin{claim} 
\label{ca}
Let $i$ be an integer satisfying $1 \leq i \leq p$.
Then every vertex of $V(G) \sm (V_1 \cup \dots \cup V_i)$ is adjacent to at most $\ell - 1$ vertices of $V_i$.
Furthermore, we get that $p \leq k$ and that the graph $G - V_1 - \dots - V_p$ is $(k-p+1)K_1$-free.
\end{claim}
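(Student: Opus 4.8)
The claim makes two separate assertions about the greedy clique-extraction process, and I would prove them in the stated order since the second follows quickly once the first is in hand.

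The plan is to prove the first statement directly by a forbidden-subgraph argument. Fix $i$ with $1 \leq i \leq p$ and take any vertex $w \in V(G) \sm (V_1 \cup \dots \cup V_i)$. Suppose for contradiction that $w$ is adjacent to at least $\ell$ vertices of $V_i$. The key structural fact to exploit is that $V_i$ was chosen as a \emph{maximum} complete subgraph of $G - V_1 - \dots - V_{i-1}$ (with $|V_i| \geq m = k(\ell-1)$, since the process continued past this step or terminated with $|V_p| \geq m$), and that $w$ lies in this same graph $G - V_1 - \dots - V_{i-1}$. If $w$ were adjacent to \emph{all} of $V_i$, then $V_i \cup \{w\}$ would be a larger complete subgraph, contradicting maximality; so $w$ misses some vertex of $V_i$. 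I would then locate, inside $V_i$ together with $w$, an induced copy of $\overline{\ell K_1 \cup K_2}$: the $\ell$ neighbours of $w$ in $V_i$ are mutually adjacent (they lie in a clique) and each is adjacent to $w$, while a vertex of $V_i$ non-adjacent to $w$ plays the role that completes the forbidden complement. Concretely, $\overline{\ell K_1 \cup K_2}$ is the complement of $\ell$ isolated vertices plus a disjoint edge, i.e.\ a clique on $\ell+2$ vertices with one edge deleted, with one further vertex adjacent to exactly $\ell$ of them; I would check that $w$, its $\ell$ neighbours in $V_i$, and one non-neighbour of $w$ in $V_i$ realise this graph as an induced subgraph, contradicting $\{kK_1 \cup K_2, \overline{\ell K_1 \cup K_2}\}$-freeness. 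This gives $|N(w) \cap V_i| \leq \ell - 1$.

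\textbf{Deriving the bounds on $p$.} For the second statement I would count independent sets. Pick one vertex from each of $V_1, \dots, V_p$; by the first statement each chosen vertex $w_j \in V_j$ has at most $\ell - 1$ neighbours in each later $V_{j'}$ with $j' > j$, but I actually want the cleaner consequence that I can extract a large independent set. The sharp way is this: since $G$ is $kK_1 \cup K_2$-free, in particular $G$ has independence number less than $k + $ (something small), but the real lever is that taking the leftover graph $H = G - V_1 - \dots - V_p$, which is $K_m$-free, together with one vertex from each $V_j$ adjacent to few others, forces an independent set of size $p$ (or close to it) that, combined with the edge guaranteed inside any $V_j$, would produce $kK_1 \cup K_2$ if $p$ were too large. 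I would argue that $p \leq k$ by showing that $p \geq k+1$ yields an independent transversal of size $k$ plus a disjoint edge inside one of the blocks, contradicting $kK_1 \cup K_2$-freeness. For the final assertion that $H = G - V_1 - \dots - V_p$ is $(k - p + 1)K_1$-free, I would suppose $H$ contains an independent set of size $k - p + 1$; combining it with a suitable independent transversal through $V_1, \dots, V_{p-1}$ (one low-degree vertex from each, chosen using the first statement so the transversal stays independent) produces $k$ independent vertices, and then an edge inside $V_p$ disjoint from all of them supplies the $K_2$, again giving induced $kK_1 \cup K_2$.

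\textbf{Anticipated obstacle.} The main difficulty is the bookkeeping in the second part: assembling a genuinely \emph{induced} copy of $kK_1 \cup K_2$ requires choosing the transversal vertices so that they are pairwise non-adjacent \emph{and} non-adjacent to the leftover independent set, and the first statement only bounds adjacencies \emph{from later blocks into earlier ones}, so I must order the selection carefully (greedily, from $V_p$ backwards) and verify at each step that the at-most-$(\ell-1)$ adjacency budget does not obstruct the independence. Getting the exact constants $p \leq k$ and $(k-p+1)K_1$-freeness to match, rather than off-by-one weaker bounds, is where the care is needed; I expect the maximality of each $V_i$ and the threshold $|V_i| \geq m = k(\ell-1)$ to be exactly what closes the counting.
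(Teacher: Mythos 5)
Your proof of the first statement is correct and is exactly the paper's argument: maximality of $V_i$ (inside $G - V_1 - \dots - V_{i-1}$, which contains $w$) forces a non-neighbour $u \in V_i$, and then $w$, any $\ell$ of its neighbours in $V_i$, and $u$ induce $K_{\ell+2}$ minus one edge, i.e.\ $\overline{\ell K_1 \cup K_2}$.

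The second part, however, has a genuine gap, and it is precisely the directional issue you flag as your ``anticipated obstacle'' but never resolve (you even state the direction backwards at first: a vertex of $V_j$ has at most $\ell-1$ neighbours in each \emph{earlier} block, not each later one). For the $(k-p+1)K_1$-freeness you fix $I$, then an independent transversal through $V_1, \dots, V_{p-1}$, and \emph{then} look for an edge inside $V_p$ disjoint from all of it. But the first statement only controls neighbours in $V_i$ of vertices lying \emph{outside} $V_1 \cup \dots \cup V_i$; it says nothing about how many neighbours a transversal vertex chosen in $V_j$ with $j < p$ has in $V_p$. Nothing in the hypotheses prevents a vertex of $V_1$ from being complete to $V_p$ (that only creates larger cliques, never a forbidden subgraph), so after your transversal is fixed there may be no two vertices of $V_p$ left to host the $K_2$, and the argument collapses; the same danger affects your $p \leq k$ step whenever the edge-hosting block comes after a transversal block. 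The paper reverses the roles: the transversal is routed through the \emph{later} blocks $V_{j+1}, V_j, \dots, V_2$, chosen latest-first so that at each selection every already-chosen vertex lies outside $V_1 \cup \dots \cup V_i$ and the first statement applies, and the edge is taken in $V_1$ at the very end, where the $k$ independent vertices exclude at most $k(\ell - 1) = m$ vertices while $|V_1| = \omega \geq R(k,m) + m \geq m + 2$. (Choosing the edge in $V_p$ \emph{before} the transversal does not rescue your configuration either: when $\ell = 2$ and $|V_p| = m$, the set $I$ alone can exclude all but one vertex of $V_p$.) A minor further difference is that the paper proves $p \leq k$ and the $(k-p+1)K_1$-freeness in a single contradiction argument, assuming $p > k$ or $|I| > k - p$, rather than as two separate constructions; once the edge is correctly placed in $V_1$, your two-step variant would also work.
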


\begin{proofcl}[Proof of Claim~\ref{ca}]
We consider a vertex $v$ of $V(G) \sm (V_1 \cup \dots \cup V_i)$,
and we note that at least one vertex of $V_i$ is not adjacent to $v$
(by the maximality of $V_i$).
Thus, $v$ is adjacent to at most $\ell - 1$ vertices of $V_i$
(since $G$ is $\overline{\ell K_1 \cup K_2}$-free).

We prove the second part of the claim by contradiction.
To this end,
we consider a maximum independent set (possibly empty), say $I$, of $G - V_1 - \dots - V_p$
and suppose that $p > k$ or $|I| > k-p$.
We will repeatedly use the first statement of the claim.
First, we show that there is an independent set of size $k$ in $G - V_1$
(this is clearly satisfied if $|I| \geq k$).
We suppose that $|I| < k$, and we let $j = k - |I|$.
In particular, we have $j < p$ (since $p > k$ or $|I| > k-p$).
We note that
$I$ can be extended to a larger independent set by adding (in sequence)
one vertex from each of $V_{j+1}, V_j, \dots, V_{2}$
(since $j < p$), and the resulting independent set is of size $k$.
Now, we use that there is an independent set of size $k$ in $G - V_1$,
and we note that it can be extended by adding two vertices of $V_1$
so that the resulting set induces $kK_1 \cup K_2$, a contradiction.
\end{proofcl}

\begin{claim} 
\label{cb}
Let $i$ be an integer satisfying $2 \leq i \leq p$.
If $X$ is a subset of $V_1 \cup \dots \cup V_{i-1}$
such that each vertex of $X$ has at least $\ell$ neighbours in $V_i$,
then $X \cup V_i$ induces a complete graph.
In particular, we have $\omega - |X| \geq |V_i|$.
\end{claim}
\begin{proofcl}[Proof of Claim~\ref{cb}]
Since $G$ is $\overline{\ell K_1 \cup K_2}$-free,
we get that each vertex of $X$ 
is adjacent to all vertices of $V_i$,
and consequently that all vertices of $X$ are adjacent.
Thus, $X \cup V_i$ induces a complete graph
and the inequality follows.
\end{proofcl}

Finally, we show that $G$ is $\omega$-colourable.
We start by colouring the subgraph induced by $V_1 \cup \dots \cup V_p$.
Clearly, the complete subgraph given by $V_1$ is $\omega$-colourable.
We suppose that there is an $\omega$-colouring of the subgraph induced by $V_1 \cup \dots \cup V_{i-1}$,
and we show that it can be extended to $V_1 \cup \dots \cup V_i$.
To~this end, we construct an auxiliary bipartite graph $(A, B)$ such that
the vertices of $A$ encode the vertices of $V_i$ and
the vertices of $B$ encode the colours,
and two vertices are adjacent if and only if the corresponding member of $V_i$
has no neighbour in $V_1 \cup \dots \cup V_{i-1}$ coloured by the corresponding colour.
We use Claims~\ref{ca} and~\ref{cb}
and show that set $A$ satisfies Hall's condition
(that is, $|N(S)| \geq |S|$ for every subset $S$ of $A$). 
Claim~\ref{ca} implies that every vertex of $V_i$
has at most $\ell - 1$ neighbours in each of $V_1, \dots, V_{i-1}$, and hence
every vertex of $A$ has degree at least $\omega - (i-1)(\ell - 1)$.
In particular, we get $|N(S)| \geq \omega - (i-1)(\ell - 1)$
for every subset $S$ of $A$,
and thus $|N(S)| \geq (i-1)(\ell - 1)$
(since  $\omega \geq 2k(\ell - 1)$ and $k \geq i$).
On the other hand, 
every vertex of $B \sm N(S)$ encodes a colour such that
each vertex of $V_i$ belonging to $S$ is adjacent to a vertex of
$V_1 \cup \dots \cup V_{i-1}$ coloured by this colour.
In particular if $|S| > (i-1)(\ell - 1)$, then
one of these vertices has at least $\ell$ neighbours in $V_i$
(since there are at most $i-1$ vertices of the same colour in
$V_1 \cup \dots \cup V_{i-1}$).
We note that the inequality from Claim~\ref{cb} translates to $|N(S)| \geq |A|$
(since $|A| = |V_i|$ and $|B| = \omega$).
Thus, we can apply Hall's theorem and obtain a matching covering~$A$,
and we note that the matching translates back to the desired extension of the colouring.

Hence, there is an $\omega$-colouring of the subgraph induced by $V_1 \cup \dots \cup V_p$,
and we extend it to $G$ as follows.
We recall that the graph $G - V_1 - \dots - V_p$ is $\{kK_1, K_m\}$-free,
and so it has fewer than $R(k,m)$ vertices (by Theorem~\ref{Ramsey}).
By Claim~\ref{ca}, each of these vertices has at most $p(\ell - 1)$ neighbours in $V_1 \cup \dots \cup V_p$,
and thus they can be readily coloured
(since $\omega \geq R(k,m) + m \geq R(k,m) + p(\ell - 1)$).
\end{proof}

Finally, we show Proposition~\ref{omega}.

\begin{proof}[Proof of Proposition~\ref{omega}]
We suppose that condition (1) is satisfied, and we choose $n$ as suggested.
We apply Corollary~\ref{applyRamsey2}
and we view $G$ as a disjoint union of graphs $M$ and $N$ (defined in Corollary~\ref{applyRamsey2}).
Clearly, $M$ is $\omega$-colourable,
and we get that $N$ is bipartite or small.
For the latter case, we note that
$N$ has at most $n' - 1$ vertices where
$$n' = R \left( k - 1, 3 \right) + \frac{8k(k^2-1)}{2k + 1} + 2k + 2,$$
and so $M$ has at least $n'(k - 2) - 2k + 5$ vertices
(since $n = n'(k - 1) - 2k + 4$),
and hence $M$ contains a complete subgraph of order $n' - 1$.
We conclude that the disjoint union of $M$ and $N$ is $\omega$-colourable.

We suppose that (2) is satisfied.
We choose $n$ as suggested,
and we consider a $\{ kK_1 \cup K_2, D \}$-free graph $G$ on at least $n$ vertices.
By Theorem~\ref{Ramsey},
$G$ contains an independent set of size $2k$
or a complete subgraph of order $R(k, k) + k$,
and we discuss these two cases.

First, we consider a maximum independent set, say $I$,
and suppose that $|I| \geq 2k$.
Clearly, every vertex of $V(G) \sm I$
is adjacent to a vertex of $I$ (by definition),
and so it is adjacent to more than $|I| - k$ vertices of $I$
(since $G$ is $kK_1 \cup K_2$-free).
Consequently, if $u$ and $v$ are vertices of $V(G) \sm I$,
then $u$ and $v$ have at least two common neighbours in $I$;
and hence $u$ and $v$ are non-adjacent
(since $G$ is $D$-free).
It follows that $G$ is bipartite, and thus $\omega$-colourable.

Next, we suppose that $\omega \geq R(k, k) + k$.
We note that graph $D$ can be viewed as $\overline{\ell K_1 \cup K_2}$ for $\ell = 2$,
and the $\omega$-colourability follows by Lemma~\ref{l5}.

Lastly, we suppose that (3) is satisfied.
By the choice of $n$ and by Theorem~\ref{Ramsey},
we get $\omega \geq R(k, k(\ell - 1)) + k(\ell - 1)$.
Thus, the statement follows by Lemma~\ref{l5}.
\end{proof}

\section{Discussing all remaining pairs}
\label{sFam}
In order to prove the `only if part' of 
Theorems~\ref{mainNoExceptions} and~\ref{mainFiniteExceptions},
we exclude all possible remaining pairs of forbidden subgraphs.
To this end, we consider families $\FF_1, \dots, \FF_{13}$ of graphs
depicted in Figures~\ref{figFamilies123}, \ref{figFamilies4to10} and~\ref{figFamilies11to13}.
In addition, we use Lemma~\ref{forest} and Observations~\ref{cK3P4on5} and~\ref{cK3P4},
and Theorem~\ref{alpha3} which resolves the pairs containing $\claw$.
As the main result of this section, we show the following.
\begin{prop}
\label{notOmega}
Let $\XX$ be a pair of graphs,
and consider the class notation of Definition~\ref{dc}
and the collections described in Definition~\ref{d3}.
For each of the following choices of a class,
if $\XX$ does not belong to the corresponding collection,
then the class contains infinitely many $\XX$-free graphs 
which are not $\omega$-colourable.
\begin{enumerate}
\item
For $\GG_{c, o, \alpha}$, the collection is $\OO_4^+$.
\item
For $\GG_{c, o}$, it is $\OO_{3}^+$.
\item
For $\GG_{c, \alpha}$, it is $\OO_{2c}^+$.
\item
For $\GG_{o, \alpha}$, it is $\OO_{2}^+$.
\item
For each of $\GG_{c}$, $\GG_{o}$, it is $\OO_{1}^+$.
\end{enumerate}
\end{prop}
The proof of Proposition~\ref{notOmega} is given below.
In addition, we show the following two observations.
\begin{obs}
\label{4K1D}
Each of the following conditions
is satisfied by infinitely many non-perfect graphs of independence $3$ which are distinct from an odd cycle.
\begin{enumerate}
\item
The graphs are $\{ 4K_1, Z_1 \}$-free.
\item
The graphs are connected and $\{ 4K_1, D\}$-free.
\item
The graphs are connected and $\{ 2K_1 \cup K_2, \overline{ K_1 \cup P_4} \}$-free.
\end{enumerate}
\end{obs}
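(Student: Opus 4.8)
The goal of Observation~\ref{4K1D} is to exhibit, for each of the three conditions, an \emph{infinite family} of non-perfect graphs that all have independence number exactly~$3$ and are not odd cycles (and, for conditions (2) and (3), are connected). The plan is to construct explicit families by taking a fixed small non-perfect seed and blowing up one vertex (or one independent set) into arbitrarily many copies, arranging the blow-up so that the two forbidden subgraphs never appear and the independence number stays at~$3$.

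First I would handle condition~(1). Since $\{4K_1,Z_1\}$-free forbids both a $4$-element independent set and a paw, a natural seed is $C_5$, which is $Z_1$-free and non-perfect with independence number~$2$; to push the independence number up to~$3$ while keeping $4K_1$ forbidden, I would take a blow-up of $C_5$ in which a single vertex is replaced by an independent set of size~$t$ (giving $\alpha=3$, realised by two non-consecutive original vertices together with the enlarged class), and I would check directly that no paw and no $4K_1$ is created as the parameter~$t$ grows. Letting $t\to\infty$ yields infinitely many such graphs, and each is non-perfect because it contains an induced $C_5$. The same $C_5$-blow-up idea should serve condition~(2): I would verify that blowing up one vertex of $C_5$ into an independent set keeps the graph $\{4K_1,D\}$-free (the diamond $D$ would require two adjacent vertices with two common neighbours, which the $C_5$ structure avoids), connected, of independence~$3$, and non-perfect.

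For condition~(3) I can invoke the structural description already proved in Lemma~\ref{l4} rather than build from scratch. The graphs $G_1,\dots,G_{14}$ of Figure~\ref{figStructureOfG} are precisely the seeds whose blow-ups give all $\{2K_1\cup K_2,\overline{K_1\cup P_4}\}$-free graphs containing an induced $C_5$, so I would pick one of these seeds (for instance $E_3\cong G_9$ or $E_4\cong G_{10}$, which are the small non-perfect graphs flagged in Proposition~\ref{perf}(1)) and blow up its vertex~$i$ into an independent set of unbounded size. By construction these blow-ups remain $\{2K_1\cup K_2,\overline{K_1\cup P_4}\}$-free, are connected, contain an induced $C_5$ (hence are non-perfect and distinct from an odd cycle), and one checks that the independence number stays at~$3$. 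This gives the required infinite family.

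The main obstacle is the bookkeeping of \emph{independence exactly~$3$} together with non-appearance of the forbidden pairs as the blow-up parameter grows: it is easy to accidentally create a $4K_1$ (if two enlarged or distant classes are mutually non-adjacent) or to destroy the non-perfectness. I would therefore organise the verification around one cleanly chosen vertex to blow up per family, argue that any independent set meets the blown-up class in at most one ``extra'' way, and confirm that the induced $C_5$ survives every blow-up so non-perfectness is automatic. The adjacency checks are finite and local to the seed, so each is routine once the seed and the blown-up vertex are fixed; the only care needed is to present a single parametrised family per item and to state the three invariants (forbidden-pair-freeness, $\alpha=3$, non-perfect and not an odd cycle) explicitly.
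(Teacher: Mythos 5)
Your proposal fails on a single point that runs through all three items, and it is fatal: you let an \emph{independent set} grow, but the statement requires the graphs to have independence number exactly $3$ (and, in items (1) and (2), to be $4K_1$-free), so no family in which an independent class has unbounded size $t$ can work. Concretely, for your $C_5$ blow-up in items (1) and (2): if $v_1$ of the cycle $v_1v_2v_3v_4v_5$ is replaced by an independent set $\{u_1,\dots,u_t\}$, each $u_j$ is adjacent precisely to $v_2$ and $v_5$, hence non-adjacent to $v_3$; thus $\{u_1,u_2,u_3,v_3\}$ induces $4K_1$ as soon as $t\geq 3$, and the independence number is $t+1$, not $3$ as you claim. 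So your family contains only finitely many graphs meeting the hypotheses, which defeats the purpose. The same problem kills item (3): in the blow-up process of Lemma~\ref{l4}, the clones of $i$ are adjacent precisely to the neighbours of $i$, so $\{i\}$ together with its clones is an independent set of size $t+1$; your assertion that the independence number ``stays at~$3$'' is false, and these graphs simply do not satisfy the condition of Observation~\ref{4K1D} (while items (1) and (2) additionally lose $4K_1$-freeness).

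The repair is to let a \emph{clique} grow instead, which is exactly what the paper does: the families $\FF_1,\FF_2,\FF_3$ of Figure~\ref{figFamilies123} are obtained from small non-perfect seeds containing $C_5$ by blowing a vertex up to a complete subgraph $K_k$ (the grey ovals in that figure), so that the independence number remains $3$ for every $k$ while the family is infinite. For instance, for item (1) the disjoint union $C_5\cup K_k$ already works: it is $Z_1$-free (each component is triangle-free or complete, and $Z_1$ is connected), $4K_1$-free of independence exactly $3$, non-perfect, and not an odd cycle. For item (3), your instinct to reuse Lemma~\ref{l4} matches the spirit of the paper, but you must blow up one of the clique vertices $c_j$ of an appropriate seed from Figure~\ref{figStructureOfG} (one whose independence number is already $3$), not the vertex $i$; the ``if part'' of Lemma~\ref{l4} then gives $\{2K_1\cup K_2,\overline{K_1\cup P_4}\}$-freeness for free, connectivity and the induced $C_5$ persist, and independence $3$ is preserved because the added vertices lie in a clique. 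In short, of the two blow-up operations available, you chose in every case the one that cannot produce the required infinite families.
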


\begin{proof}
We consider the graphs belonging to $\FF_{1}, \FF_{2}$ and $\FF_{3}$
(depicted in Figure~\ref{figFamilies123}),
and we note that the statement is satisfied subject to condition (1), (2) and (3), respectively.
\end{proof}

\begin{figure}[h!]
    \centering
    \includegraphics[scale=0.7]{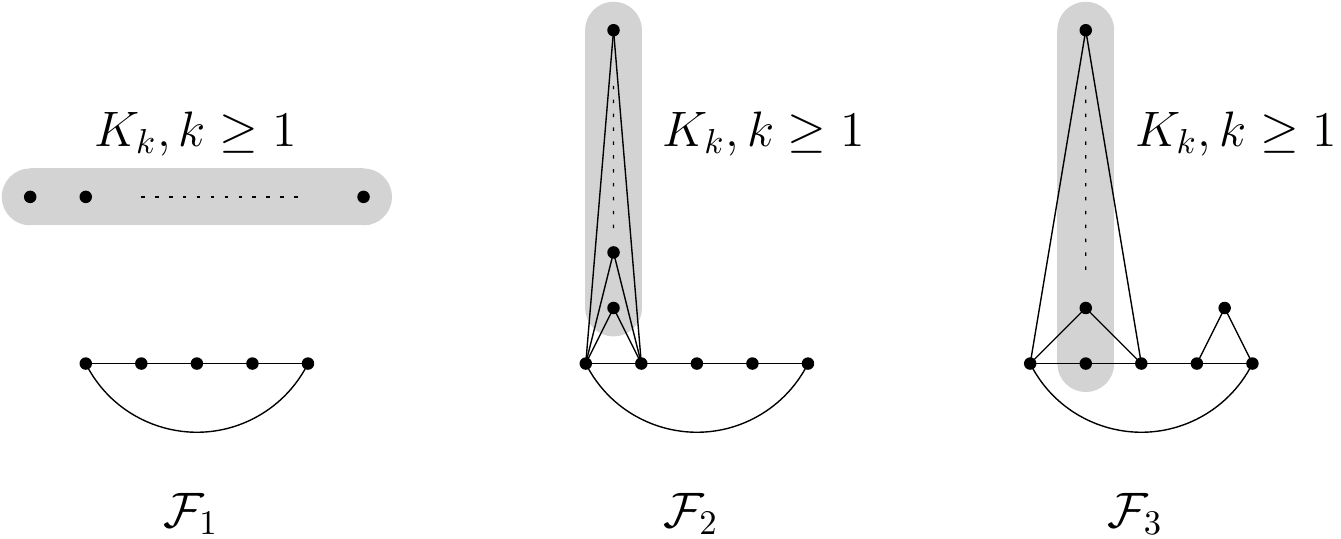}
    \caption{Families $\FF_1, \FF_2$ and $\FF_3$ of graphs.
	The grey ovals depict complete subgraphs $K_k$,
	and every choice of a positive integer $k$ gives a graph belonging to the family.
    The graphs of $\FF_1, \FF_2$ and $\FF_3$ are non-perfect and of independence~$3$.
    }
    \label{figFamilies123}
\end{figure}

\begin{obs}
\label{anException}
Each of the following conditions
is satisfied by some connected graph which is not $\omega$-colourable
and distinct from an odd cycle.
\begin{enumerate}
\item
The graph is $\{ 3K_1, K_4 \}$-free.
\item
The graph is $\{ 4K_1, K_3 \}$-free and of independence $3$.
\end{enumerate}
\end{obs}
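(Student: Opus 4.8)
The plan is to prove Observation~\ref{anException} by exhibiting, for each of the two conditions, a single explicit witness graph and checking the three common requirements (connected, distinct from an odd cycle, not $\omega$-colourable) together with the stated forbidden-subgraph condition. First I would rewrite each forbidden-subgraph condition as a pair of numerical bounds: being $\{3K_1, K_4\}$-free is exactly $\alpha(G) \le 2$ together with $\omega(G) \le 3$, while being $\{4K_1, K_3\}$-free of independence $3$ is exactly being triangle-free with $\alpha(G) = 3$ (so $\omega(G) = 2$). Thus for (1) it suffices to produce a connected non-odd-cycle graph with $\alpha \le 2$, $\omega \le 3$ and $\chi \ge 4$; and for (2) a connected non-odd-cycle triangle-free graph with $\alpha = 3$ and $\chi \ge 3$.

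For condition (1) I would take $G = \overline{C_7}$, the odd antihole on seven vertices. Since $\alpha(\overline{C_7}) = \omega(C_7) = 2$ and $\omega(\overline{C_7}) = \alpha(C_7) = 3$, this graph is $\{3K_1, K_4\}$-free; it is $4$-regular on $7$ vertices, hence connected and distinct from every odd cycle. Its complement $C_7$ contains an induced odd cycle of length at least $5$, so $\overline{C_7}$ is imperfect by Theorem~\ref{tSPGT}; concretely $\chi(\overline{C_7}) = 4 > 3 = \omega(\overline{C_7})$, so it is not $\omega$-colourable.

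For condition (2) I would take the graph obtained from the cycle $C_7$ on vertices $1, \dots, 7$ by adding the single chord $\{1,4\}$; equivalently, a $C_4$ and a $C_5$ glued along a common edge. A direct inspection of the eight edges shows the graph is triangle-free, and since adding an edge cannot raise the independence number while $\{2,5,7\}$ stays independent, we get $\alpha = 3$ (so the graph is $4K_1$-free of independence $3$, with $\omega = 2$). It is connected and, having eight edges on seven vertices, is distinct from every odd cycle; finally it contains an induced $C_5$ on $\{1,4,5,6,7\}$, so $\chi \ge 3 > 2 = \omega$ and it is not $\omega$-colourable. (The Wagner graph, triangle-free with $\alpha = 3$ and $\chi = 3$, would serve equally well.)

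The individual computations — the two independence numbers, the two chromatic numbers, and triangle-freeness of the chorded $C_7$ — are the entire content, and none of them is difficult. The one point needing genuine care is the tightness in condition (2): by Ramsey's Theorem (Theorem~\ref{Ramsey}), since $R(3,4) = 9$, any triangle-free graph with $\alpha \le 3$ has at most eight vertices, so the witness must be small and the verification is simultaneous rather than incremental. I must confirm at once that the chord keeps the graph triangle-free, does not create an induced $4K_1$, and does not destroy the odd cycle forcing $\chi \ge 3$ — that is, that the result remains a non-bipartite, non-odd-cycle graph of independence exactly $3$.
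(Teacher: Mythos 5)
Your proposal is correct and follows essentially the same approach as the paper: the paper also proves the observation by exhibiting explicit witness graphs (its graphs $F_6$ and $F_{10}$, the smallest members of the families $\FF_6$ and $\FF_{10}$ in its figures) and noting they have the desired properties. Your particular witnesses $\overline{C_7}$ and the chorded $C_7$ differ from the paper's, but both verify correctly, so this is merely a different choice of example within the same method.
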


\begin{proof}
For instance,
we consider graphs $F_6$ and $F_{10}$ depicted in Figure~\ref{figAnException},
and we note that they have the desired properties and
$F_6$ satisfies condition (1) and $F_{10}$ satisfies (2).
\end{proof}

We remark that there are precisely $13$, $14$ graphs satisfying
Observation~\ref{anException} subject to condition (1), (2), respectively.

\begin{figure}[h!]
    \centering
    \includegraphics[scale=0.7]{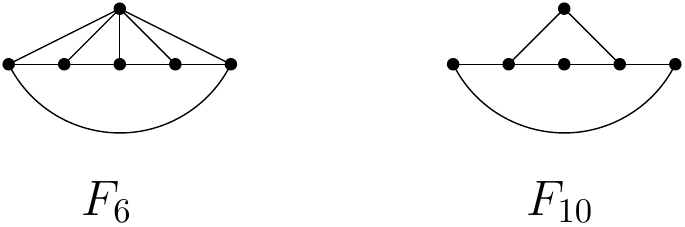}
    \caption{Graphs $F_6$ and $F_{10}$.
    (In fact, they are the smallest members of families
    $\FF_6$ and $\FF_{10}$, respectively.)
    }
    \label{figAnException}
\end{figure}

In the remainder of the section, we show Proposition~\ref{notOmega}.

\begin{proof}[Proof of Proposition~\ref{notOmega}]
\setcounter{claim}{0}
\setcounter{claimprefix}{\getrefnumber{notOmega}}
We consider families $\FF_{4}, \dots, \FF_{13}$ of graphs
(depicted in Figures~\ref{figFamilies4to10} and~\ref{figFamilies11to13}),
and we observe that no graph of $\FF_4, \dots, \FF_{13}$ is $\omega$-colourable. 
We let $X$ and $Y$ denote the graphs of $\XX$,
and we show statements (1) \dots (5).

\begin{figure}[t]
    \centering
    \includegraphics[scale=0.7]{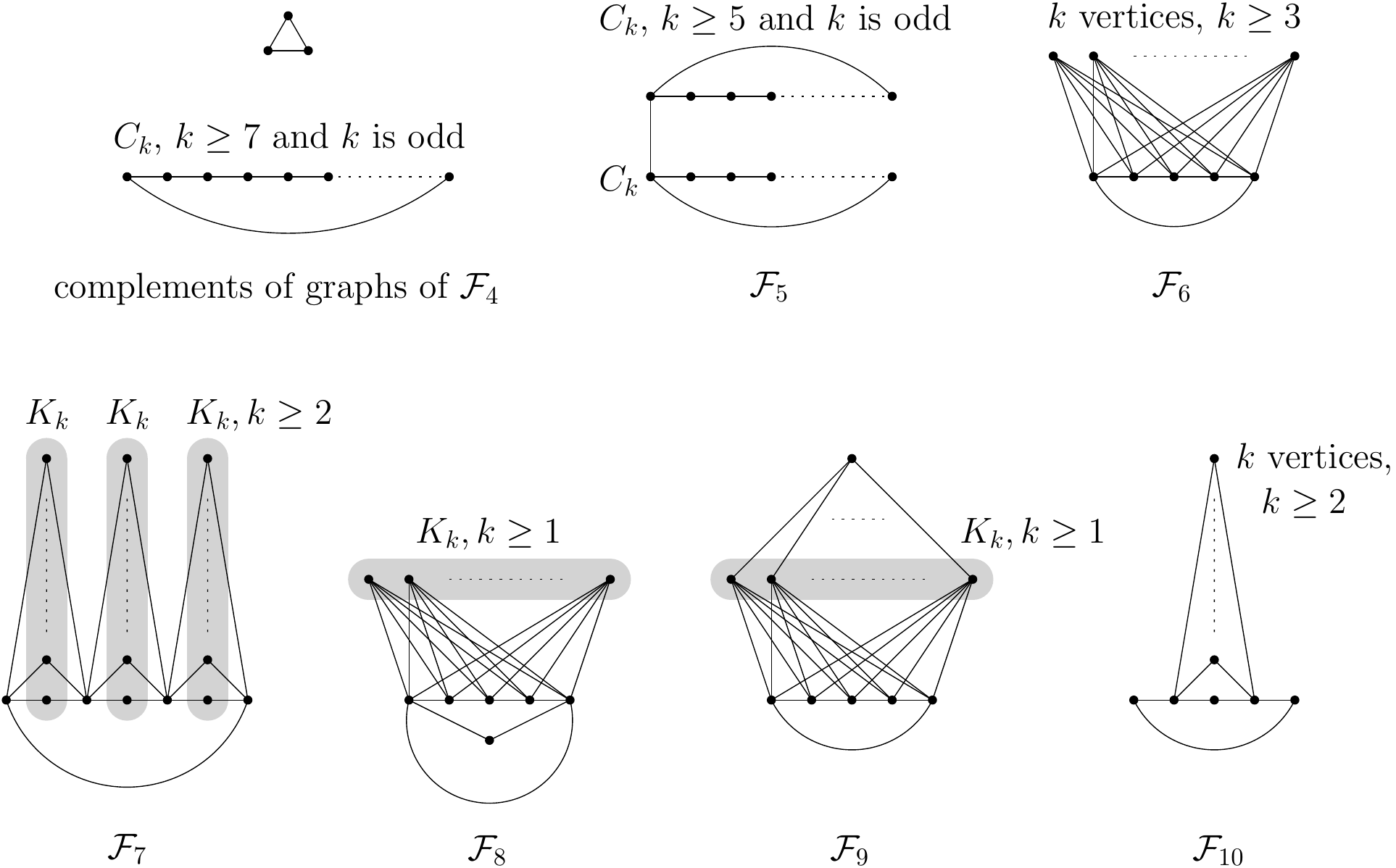}
    \caption{Family $\FF_4$ of graphs (given by complements)
    and families $\FF_5, \dots, \FF_{10}$.
	Similarly to Figure~\ref{figFamilies123},
	each grey oval depicts a complete subgraph.    
    The graphs of $\FF_4, \dots, \FF_{10}$ are connected
    and of independence at least $3$
    and not $\omega$-colourable.
    }
    \label{figFamilies4to10}
\end{figure}

We show statement (1), that is,
if $\XX$ does not belong to $\OO_4^+$, then
$\GG_{c, o, \alpha}$ contains infinitely many $\XX$-free graphs 
which are not $\omega$-colourable.
For the sake of clarity and efficiency of the proof, we show several claims.

\begin{claim} 
\label{c1}
No member of $\XX$ is $3K_1$ or an induced subgraph of $P_4$,
and we can assume that no member is $K_{1,3}$.
\end{claim}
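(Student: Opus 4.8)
The plan is to establish Claim~\ref{c1} as a preliminary normalisation step: it records which pairs $\XX$ have already been dealt with, so that the substantive constructions in the rest of the proof of statement~(1) only need to treat the genuinely new cases. I would prove each of the three assertions by contradiction, in each case exhibiting (or pointing to) an infinite family of connected non-$\omega$-colourable graphs of independence at least~$3$ distinct from an odd cycle which is $\XX$-free, contradicting the standing assumption that $\XX$ does not belong to $\OO_4^+$ while it does forbid such graphs.

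First I would handle the case that some member of $\XX$ is an induced subgraph of $P_4$. Here the point is that such a forbidden graph is far too weak to be useful: by Seinsche's result (recalled via Theorem~\ref{tSPGT}), any class defined by forbidding an induced subgraph of $P_4$ together with anything else still contains all the non-$\omega$-colourable examples we care about, unless the constraint is vacuous. More precisely, if $X$ is an induced subgraph of $P_4$ then every member of any of the families $\FF_4,\dots,\FF_{13}$ avoids $X$ (each such family graph contains an induced $P_4$ and indeed all of $P_4$'s proper induced subgraphs, so forbidding a \emph{subgraph} of $P_4$ excludes nothing in $\GG_{c,o,\alpha}$ of interest); hence an appropriate infinite family is $\XX$-free, a contradiction. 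The cleanest route is to observe that if one member of $\XX$ is an induced subgraph of $P_4$, then $\XX$ would in fact lie in $\PP_1\subseteq\OO_4^+$, directly contradicting the hypothesis, so this case cannot occur at all.

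Next I would treat $3K_1$. If some member of $\XX$ is $3K_1$, then by definition $\XX$ belongs to the collection $\II$, and one checks from Definition~\ref{d3} that every pair of $\II$ that is relevant here already lies in $\OO_4^+$ (via $\OO_4\supseteq\PP_4\supseteq\II$-pairs, or via $\AAA_1$); thus the hypothesis that $\XX\notin\OO_4^+$ forces $\XX$ to contain no $3K_1$. The $K_{1,3}$ assertion is the softest of the three, phrased as \emph{we can assume}: here I would invoke Theorem~\ref{alpha3}, which completely resolves pairs $\{\claw,Y\}$ over exactly the class $\GG_{c,o,\alpha}$. If one member is $\claw=K_{1,3}$, then either $Y$ is an induced subgraph of $P_5$ or $Z_2$ (and then $\XX\in\PP_4\subseteq\OO_4^+$, contradicting the hypothesis), or else $\GG_{c,o,\alpha}$ already contains infinitely many non-$\omega$-colourable $\{\claw,Y\}$-free graphs, which is exactly the desired conclusion; so the $\claw$-containing pairs are settled outright and need not be revisited, justifying the assumption.

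The main obstacle I anticipate is not any single construction but the bookkeeping: verifying that \emph{every} pair eliminated by each bullet of Claim~\ref{c1} really does land inside $\OO_4^+$, since the collections of Definition~\ref{d3} are defined by a long list of overlapping sub-collections ($\PP_1,\PP_4,\AAA_{\PP},\AAA_1,\AAA_c,\AAA_3$ and their $\Omega$-analogues). The delicate point is to make sure no pair slips through—e.g.\ a pair with one member $3K_1$ and the other some large graph not explicitly listed—so I would cross-reference against Figure~\ref{figPairsXplus} and the explicit membership statements in Definition~\ref{d3} rather than re-deriving each containment. Once Claim~\ref{c1} is in place, the remaining claims in the proof of statement~(1) can safely assume that both members of $\XX$ are connected graphs on at least two edges that are neither $3K_1$ nor $\claw$ nor induced subgraphs of $P_4$, which is precisely the regime the families $\FF_4,\dots,\FF_{13}$ are designed to obstruct.
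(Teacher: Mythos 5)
Your proposal is correct and takes essentially the same route as the paper: the first assertion is a direct consequence of $\XX\notin\OO_4^+$ (pairs containing an induced subgraph of $P_4$ lie in $\PP_1$, and pairs containing $3K_1$ lie in $\II$, both of which are contained in $\OO_4^+$), and the $\claw$ case is dispatched via Theorem~\ref{alpha3} exactly as in the paper. Your discarded first route for the $P_4$ case is self-contradictory (the graphs of $\FF_4,\dots,\FF_{13}$ contain induced $P_4$, so they do not avoid its induced subgraphs), but since you supersede it with the correct membership argument, nothing is lost.
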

\begin{proofcl}[Proof of Claim~\ref{c1}]
The first part of the claim follows from the assumption
that $\XX$ does not belong to $\OO_4^+$.
For the case that $K_{1,3}$ belongs to $\XX$,
we recall that the other member of $\XX$ is not an induced subgraph of $P_5$ or of $Z_2$
(since $\XX$ does not belong to $\OO_4^+$),
and so the statement of the proposition follows
by Theorem~\ref{alpha3}.
Thus, we can assume that $K_{1,3}$ does not belong to $\XX$.
\end{proofcl}

\begin{claim} 
\label{c2}
We can assume that $X$ is a forest and $Y$ is not.
Furthermore, we can assume that $X$ is $K_{1,4}$-free.
\end{claim}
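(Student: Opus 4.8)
The plan is to eliminate, by exhibiting explicit infinite families, the two configurations that would contradict the claimed reduction, and then relabel. Throughout, producing a \emph{witness family} -- an infinite family of connected, non-$\omega$-colourable graphs of independence at least $3$ and distinct from an odd cycle (hence lying in $\GG_{c,o,\alpha}$) that are $\XX$-free -- settles the proposition outright in the case at hand, so afterwards the stated assumption may be made. I would first dispose of the case in which neither $X$ nor $Y$ is a forest. Then both contain a chordless cycle (a shortest cycle is always induced); let $g$ exceed both of these cycle lengths. By the construction of Erd\H{o}s~\cite{E} there are graphs of girth greater than $g$ and chromatic number at least $4$, and I would pass to a connected component realising the chromatic number. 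Such a graph is triangle-free (girth at least $4$), so its clique number is $2$ while $\chi \geq 4$; hence it is not $\omega$-colourable and is not an odd cycle, it is sparse and large so of independence at least $3$, and having no cycle of length at most $g$ it contains neither $X$ nor $Y$ even as a subgraph. Letting the chromatic number grow yields infinitely many witnesses, so we may assume at least one of $X, Y$ is a forest.

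Next I would dispose of the case in which both $X$ and $Y$ are forests. The high-girth graphs above no longer help (they contain long induced paths and other forests), and complementing the Erd\H{o}s family is not an option, since complementation preserves perfectness but \emph{not} $\omega$-colourability (for bipartite $H$ the graph $\overline{H}$ is always $\omega$-colourable). Instead I would use dense non-$\omega$-colourable witnesses: the families $\FF_4, \dots, \FF_{13}$ together with blow-ups of odd holes, that is, graphs obtained from $C_{2k+1}$ with $k \geq 3$ by replacing each vertex by a clique. Such a blow-up is connected, of independence $k \geq 3$, not an odd cycle, and not $\omega$-colourable; moreover it is $\claw$-free (a vertex has pairwise non-adjacent neighbours only in its two adjacent blobs) and of bounded independence, so it avoids every admissible forest of maximum degree at least $3$ and every forest containing $(k+1)K_1$. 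Using Claim~\ref{c1} -- each of $X,Y$ is a forest on at least four vertices, distinct from $P_4$, $\claw$ and $3K_1$ -- I would match each admissible pair, according to longest induced path, number of components, and maximum degree, to one of these witnesses; forests with a long path or many leaves are avoided by a blow-up of suitable length, and the remaining small linear forests are matched to the tuned families $\FF_4, \dots, \FF_{13}$. In every case a witness exists, so we may assume exactly one of $X, Y$ is a forest, and we relabel so that $X$ is the forest and $Y$ is not.

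Finally I would arrange that $X$ is $K_{1,4}$-free. Since $X$ is a forest, the neighbours of any vertex are pairwise non-adjacent, so $X$ contains an induced $K_{1,4}$ precisely when it has a vertex of degree at least $4$. Suppose it does. As $Y$ is not a forest it has a shortest (chordless) cycle, of length $\ell$ say; I would then take connected cubic non-bipartite graphs of girth greater than $\ell$ (these exist of arbitrarily large girth). Each is $X$-free because its maximum degree is $3<4$, and $Y$-free because it has no cycle of length at most $\ell$; it is triangle-free and non-bipartite, so $\omega = 2 < 3 = \chi$ (the upper bound by Brooks' theorem, using that it is cubic, triangle-free and not an odd cycle), hence not $\omega$-colourable and not an odd cycle; and being sparse it has independence at least $3$. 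This gives infinitely many witnesses, so we may assume $X$ is $K_{1,4}$-free.

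The main obstacle is the both-forests case of the second paragraph: there is no single witness family, so one must match each admissible pair of (small) forests to an explicit non-$\omega$-colourable construction among the blow-ups and the families $\FF_4, \dots, \FF_{13}$ and verify $\XX$-freeness by hand. By contrast, the neither-a-forest step and the $K_{1,4}$ step are uniform: the former is a direct application of Erd\H{o}s's theorem and the latter of a bounded-degree high-girth construction, with the girth chosen from the cycle length of the non-forest member $Y$.
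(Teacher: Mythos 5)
Your first and third reductions (neither member a forest; $X$ containing an induced $K_{1,4}$) are correct, and they take a genuinely different route from the paper: where you invoke Erd\H{o}s's high-girth construction and non-bipartite cubic graphs of large girth, the paper simply points to one explicit family $\FF_5$ of bounded-degree graphs whose induced cycles are all long, which settles both of these cases at once. The problem is your second paragraph: the both-forests case is not actually proved, and your framing of it is wrong. You assert that ``there is no single witness family'' and defer to an unexecuted matching of admissible pairs of forests to blow-ups of odd holes and the families $\FF_4, \dots, \FF_{13}$, to be ``verified by hand''. That matching is exactly the content that is missing, and the ingredients you name do not obviously suffice: a clique blow-up of $C_{2k+1}$ with $k \geq 3$ contains $2K_2$, $K_1 \cup P_3$ and $2K_1 \cup K_2$ as induced subgraphs (and long induced paths when the hole is long), so blow-ups avoid essentially no admissible linear forests, and you never say which construction handles a pair such as $\XX = \{ 2K_2, K_1 \cup P_3 \}$ or $\{ P_{10}, 2K_1 \cup K_2 \}$.

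The paper closes this case with a single uniform argument, and the key idea you are missing is item (1) of Lemma~\ref{forest}: every forest on at least four vertices distinct from $P_4$ and $\claw$ (which, by Claim~\ref{c1}, both members of $\XX$ are in this case) contains one of $4K_1$, $2K_1 \cup K_2$, $K_1 \cup P_3$, $2K_2$ as an induced subgraph. Consequently the single family $\FF_4$, whose graphs are $\{ 4K_1, 2K_1 \cup K_2, K_1 \cup P_3, 2K_2 \}$-free, connected, of independence at least $3$ and not $\omega$-colourable, is $\XX$-free for \emph{every} pair of admissible forests --- directly contradicting your ``no single witness family'' claim. Until you either prove such a reduction lemma or actually carry out the case analysis you sketch, the claim is not established; as written, the proposal has a genuine gap precisely at the step you yourself identify as the main obstacle.
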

\begin{proofcl}[Proof of Claim~\ref{c2}]
We suppose that either both members of $\XX$ are forests or none of them is,
and we show that the statement of the proposition is satisfied.
For the case that both are forests,
we note that each member of $\XX$ has at least four vertices
and is distinct from $P_4$ and $K_{1,3}$ (by Claim~\ref{c1}).
Hence, each member of $\XX$
contains some of $4K_1$, $2K_1 \cup K_2$, $K_1 \cup P_3$, $2K_2$
as an induced subgraph
(by item (1) of Lemma~\ref{forest}).
We consider family $\FF_4$ and note that it consists of
$\{4K_1, 2K_1 \cup K_2, K_1 \cup P_3, 2K_2\}$-free graphs.
Thus, every graph of $\FF_4$ is $\XX$-free,
and the statement is satisfied. 
In the latter case, we have that both members of $\XX$ contain a cycle,
and we consider family $\FF_5$
and observe that it contains infinitely many $\XX$-free graphs.
Consequently, we can assume that $X$ is a forest and $Y$ is not.
The `furthermore part' of the claim follows similarly by
considering~$\FF_5$.
\end{proofcl}

\begin{claim} 
\label{c3}
In addition, we can assume that $X$ is either $K_1 \cup P_3$ or contains some of
$4K_1, 2K_1 \cup K_2, 2K_2$ as an induced subgraph,
and that $Y$ is $\{ 4K_1, 2K_1 \cup K_2, K_1 \cup P_3, 2K_2, K_1 \cup K_3, C_5 \}$-free.
\end{claim}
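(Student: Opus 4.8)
The plan is to prove the two halves of the claim in turn: first settle the dichotomy for $X$, and then use it, together with the families $\FF_4,\dots,\FF_{13}$, to force $Y$ to avoid the six listed graphs.

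For the statement about $X$ I would simply apply item~(1) of Lemma~\ref{forest}. By Claim~\ref{c2} the graph $X$ is a forest, and by Claim~\ref{c1} it is neither $3K_1$ nor an induced subgraph of $P_4$; since every forest on at most three vertices is $3K_1$ or an induced subgraph of $P_4$, this forces $X$ to have at least four vertices, and moreover $X$ is distinct from $P_4$ and from $\claw$. Thus $X$ satisfies the hypotheses of Lemma~\ref{forest}, and item~(1) asserts that a $\{4K_1, 2K_1 \cup K_2, 2K_2\}$-free such forest equals $K_1 \cup P_3$. Taking the contrapositive yields precisely the dichotomy: $X$ is $K_1 \cup P_3$ or else contains one of $4K_1, 2K_1 \cup K_2, 2K_2$ as an induced subgraph. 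Note that this half is a genuine fact rather than an extra hypothesis.

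For the statement about $Y$ the mechanism is the monotonicity of freeness: if $H$ is an induced subgraph of $Y$ then every $H$-free graph is $Y$-free, and likewise every $W$-free graph is $X$-free whenever $W$ is an induced subgraph of $X$. Hence, to discharge the case that $Y$ contains a given $H$ from the list, it suffices to exhibit one of the families $\FF_4,\dots,\FF_{13}$ that is $\{W,H\}$-free for the relevant $W$ supplied by the first half of the claim; since each such family consists of connected graphs of independence at least $3$, distinct from odd cycles, that are not $\omega$-colourable, this produces the infinitely many graphs demanded by the proposition and lets us assume henceforth that $Y$ avoids $H$. For the four graphs $H \in \{4K_1, 2K_1 \cup K_2, K_1 \cup P_3, 2K_2\}$ the family $\FF_4$ settles every case at once: as recorded in the proof of Claim~\ref{c2}, $\FF_4$ is $\{4K_1, 2K_1 \cup K_2, K_1 \cup P_3, 2K_2\}$-free, hence free both of such an $H$ and of whichever of these four graphs is an induced subgraph of $X$, so its members are $\XX$-free.

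This leaves only $H = K_1 \cup K_3$ and $H = C_5$, which I would treat by a short case analysis over the (at most four) possibilities for $X$ given by the first half, matching each pair $\{W,H\}$ to a family among $\FF_5,\dots,\FF_{13}$ whose members avoid both graphs; for instance the $\{4K_1, K_3\}$-free graphs of $\FF_{10}$ from Observation~\ref{anException} are in particular $\{4K_1, K_1 \cup K_3\}$-free and dispose of the case $W = 4K_1$, $H = K_1 \cup K_3$, while suitable join-type constructions remain $C_5$-free and $K_1 \cup P_3$-free yet fail to be $\omega$-colourable. The main obstacle is exactly this bookkeeping: one must read off from Figures~\ref{figFamilies4to10} and~\ref{figFamilies11to13} the precise forbidden-subgraph profile of each family, verify that some family is free of each required pair $\{W,H\}$, and confirm in each case that the family lies in $\GG_{c,o,\alpha}$ and is not $\omega$-colourable. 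Once the families are tabulated the verification is routine, but this is where essentially all the content of the claim resides.
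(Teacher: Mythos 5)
Your treatment of $X$ coincides with the paper's: Claims~\ref{c1} and~\ref{c2} together with item (1) of Lemma~\ref{forest} give the dichotomy, and you correctly note this half is a consequence rather than an assumption. The gap is in the second half. The paper needs no case analysis at all: the single observation underlying its proof is that the family $\FF_4$ is free of \emph{all six} graphs $4K_1, 2K_1 \cup K_2, K_1 \cup P_3, 2K_2, K_1 \cup K_3, C_5$ --- equivalently, since $\FF_4$ is presented by complements in Figure~\ref{figFamilies4to10}, the complements of its members are $\{ K_4, D, Z_1, C_4, \claw, C_5 \}$-free --- and not merely of the four graphs recorded in the proof of Claim~\ref{c2}. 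Once this is observed, any $Y$ containing one of the six graphs makes every graph of $\FF_4$ both $Y$-free and (by the first half) $X$-free, and the claim follows in one stroke. You use $\FF_4$ only for the four-graph sublist and defer $K_1 \cup K_3$ and $C_5$ to a case analysis over the possibilities for $X$ that you do not carry out (``suitable join-type constructions''), conceding that this is where ``essentially all the content of the claim resides''. As written, this proves only the weaker claim with those two graphs deleted from the list.

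Moreover, the one concrete case you do attempt fails. For $W = 4K_1$, $H = K_1 \cup K_3$ you invoke ``the $\{4K_1, K_3\}$-free graphs of $\FF_{10}$'', but Proposition~\ref{notOmega} demands \emph{infinitely many} graphs of $\GG_{c,o,\alpha}$ which are not $\omega$-colourable, and by the remark following Observation~\ref{anException} there are precisely $14$ graphs that are connected, not $\omega$-colourable, distinct from an odd cycle, $\{4K_1, K_3\}$-free and of independence $3$. Only finitely many members of $\FF_{10}$ (its smallest member $F_{10}$ among them) are triangle-free; the larger members contain $K_3$ inside their clique ovals, so this route cannot produce the required infinite family. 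Indeed, the graphs witnessing this case must contain triangles while being $K_1 \cup K_3$-free, which is exactly what the members of $\FF_4$ achieve and is precisely why $K_1 \cup K_3$ (and $C_5$) appear in the claim's list.
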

\begin{proofcl}[Proof of Claim~\ref{c3}]
The first part of the claim follows by Claims~\ref{c1} and~\ref{c2}
and item (1) of Lemma~\ref{forest}.
For the case that $Y$ contains some of
$4K_1, 2K_1 \cup K_2, K_1 \cup P_3, 2K_2, K_1 \cup K_3, C_5$
as an induced subgraph,
we consider family $\FF_4$ and observe that the statement is satisfied.
\end{proofcl}

\begin{claim} 
\label{c4}
In addition, we can assume that
$X$ is distinct from $K_1 \cup \claw$ and $\claw^+$
and that $Y$ is $K_4$-free.
\end{claim}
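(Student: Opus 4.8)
The plan is to mirror the pattern of Claims~\ref{c1}--\ref{c3}: for each of the three assertions I wish to assume, I suppose it fails and then exhibit an infinite family of connected graphs of independence at least~$3$, distinct from an odd cycle and not $\omega$-colourable, which is $\XX$-free; such a family witnesses the conclusion of statement~(1) and so licenses the assumption. Throughout I use the standing reductions: by Claims~\ref{c1}--\ref{c3}, $X$ is a $K_{1,4}$-free forest on at least four vertices, distinct from $\claw$ and $P_4$, which is either $K_1 \cup P_3$ or contains one of $4K_1$, $2K_1 \cup K_2$, $2K_2$ as an induced subgraph; and $Y$ is not a forest and is $\{4K_1, 2K_1 \cup K_2, K_1 \cup P_3, 2K_2, K_1 \cup K_3, C_5\}$-free.

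The two assertions $X \neq K_1 \cup \claw$ and $X \neq \claw^+$ I would obtain together, and cleanly, from Theorem~\ref{alpha3}. The key point is that both $K_1 \cup \claw$ and $\claw^+$ contain $\claw$ as an induced subgraph, so every $\claw$-free graph is automatically $(K_1 \cup \claw)$-free and $\claw^+$-free. Hence, if $X \in \{K_1 \cup \claw, \claw^+\}$, then every $\{\claw, Y\}$-free graph is already $\{X, Y\}$-free. Now $Y$ contains a cycle by Claim~\ref{c2}, whereas $P_5$ and $Z_2$ are forests, so $Y$ is an induced subgraph of neither $P_5$ nor $Z_2$; Theorem~\ref{alpha3} therefore yields infinitely many non-$\omega$-colourable graphs of $\GG_{c,o,\alpha}$ that are $\{\claw, Y\}$-free, and these witness the conclusion. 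Thus I may assume $X \notin \{K_1 \cup \claw, \claw^+\}$.

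For the assertion that $Y$ is $K_4$-free, I would argue by contradiction: suppose $Y$ contains $K_4$. Since no $K_4$-free graph can contain an induced copy of a graph possessing a $K_4$ subgraph, it suffices to produce an infinite $K_4$-free, $X$-free family of bad graphs, as any such family is automatically $Y$-free. I would split according to which small forest $X$ contains (recall from Lemma~\ref{forest}(1) and Claim~\ref{c3} that $X$ is $K_1 \cup P_3$ or contains one of $4K_1$, $2K_1 \cup K_2$, $2K_2$). If $X$ contains $4K_1$, a triangle-free family of independence exactly $3$ and chromatic number at least~$3$ (the type of graph realised by $\FF_{10}$) is $4K_1$-free, hence $X$-free, and $K_3$-free, hence $K_4$-free. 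If $X$ contains $2K_2$, an independent blow-up of $C_5$ is triangle-free and $2K_2$-free and so does the job. It remains to treat the cases where $X$ contains $2K_1 \cup K_2$ but neither $4K_1$ nor $2K_2$ (which, under the standing constraints, forces $X = 2K_1 \cup K_2$) and where $X = K_1 \cup P_3$; these I would dispose of by the appropriate $K_4$-free families among $\FF_4, \dots, \FF_{13}$, constructed as blow-ups or complements of small non-$\omega$-colourable graphs so as to avoid precisely the relevant small forest while keeping the clique number at most~$3$.

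The \emph{main obstacle} is exactly this last subcase analysis inside the $K_4$ part, and most acutely the case $X = K_1 \cup P_3$: one must exhibit a non-$\omega$-colourable $K_4$-free family that is $(K_1 \cup P_3)$-free, which is delicate because the natural candidates --- for instance independent blow-ups of $C_5$ --- already contain $K_1 \cup P_3$. I expect this to be resolved by a family of complements of suitable paw-free graphs (invoking Olariu's description, Lemma~\ref{olariu}) of independence number~$3$, and the genuine labour is in verifying that each chosen $\FF_i$ has the precise forbidden-subgraph profile needed to be simultaneously $X$-free and $K_4$-free. By contrast, the assertions about $X$ are immediate once one observes that $\claw$-freeness subsumes both $(K_1 \cup \claw)$-freeness and $\claw^+$-freeness and then applies Theorem~\ref{alpha3}.
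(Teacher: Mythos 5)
Your overall route for the first half --- observe that $\claw$-freeness subsumes both $(K_1\cup\claw)$-freeness and $\claw^+$-freeness and then apply Theorem~\ref{alpha3} --- is the same as the paper's, but your verification of the hypothesis of Theorem~\ref{alpha3} rests on a false premise: $Z_2$ is \emph{not} a forest. It is obtained from $Z_1$ (a triangle with a pendant edge) by subdividing the pendant edge, so it contains a triangle, and its non-forest induced subgraphs are $K_3$, $Z_1$, $K_1\cup K_3$ and $Z_2$ itself. Hence ``$Y$ contains a cycle'' does not imply ``$Y$ is not an induced subgraph of $Z_2$''. The subgraphs $K_1\cup K_3$ and $Z_2$ are excluded because $Y$ is $K_1\cup K_3$-free (Claim~\ref{c3}), but $Y=K_3$ and $Y=Z_1$ can only be excluded by invoking the standing hypothesis that $\XX$ does not belong to $\OO_4^+$, since $\{K_1\cup\claw,K_3\}$ and $\{K_1\cup\claw,Z_1\}$ lie in $\AAA_3\subseteq\OO_4^+$, while $\{\claw^+,K_3\}$ and $\{\claw^+,Z_1\}$ lie in $\PP_3\subseteq\OO_4\subseteq\OO_4^+$. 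Your argument never uses that hypothesis, and the gap is not cosmetic: applied to $\XX=\{\claw^+,K_3\}$, your reasoning would produce infinitely many non-$\omega$-colourable $\XX$-free graphs in $\GG_{c,o,\alpha}$, whereas by Observation~\ref{bip} every connected $\{\claw^+,K_3\}$-free graph distinct from an odd cycle is bipartite, hence perfect.

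The second half has a Ramsey-type obstruction. For the case that $X$ contains induced $4K_1$ you propose an infinite family of triangle-free graphs of independence exactly $3$; no such family exists, since by Theorem~\ref{Ramsey} every $\{4K_1,K_3\}$-free graph has fewer than $R(4,3)=9$ vertices (the paper's remark after Observation~\ref{anException} counts precisely $14$ relevant graphs). Moreover the failure is strategic rather than a poor choice of family: when $X=kK_1$ with $k\geq 4$ (a case your reductions allow), every $X$-free and $K_4$-free graph has fewer than $R(k,4)$ vertices, so your sufficient condition --- an infinite family that is simultaneously $K_4$-free and $X$-free --- is unattainable, and $Y$-freeness cannot be certified through $K_4$-freeness at all. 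This is exactly where the paper argues differently: it reduces to $X=kK_1$, uses $\XX\notin\OO_4^+$ once more to conclude that $Y$ is neither complete nor of the form $\overline{\ell K_1\cup K_2}$ and therefore has at least two non-edges, recalls from Claim~\ref{c3} that $Y$ is $K_1\cup K_3$-free, and then takes the family $\FF_7$, whose members contain arbitrarily large cliques and are $Y$-free by virtue of these two properties of $Y$. (Your blow-ups of $C_5$ for the $2K_2$ case are sound, and your two deferred cases $X=2K_1\cup K_2$ and $X=K_1\cup P_3$ can indeed be settled by joins of a large independent set with $C_5$, which are $\{2K_1\cup K_2,\,K_1\cup P_3,\,2K_2,\,K_4\}$-free and not $\omega$-colourable; but neither observation repairs the two errors above, both of which require the unused hypothesis $\XX\notin\OO_4^+$.)
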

\begin{proofcl}[Proof of Claim~\ref{c4}]
We suppose that $X$ is either $K_1 \cup \claw$ or $\claw^+$.
We recall that $Y$ is not a forest (by Claim~\ref{c2})
and that $Y$ is $K_1 \cup K_3$-free (by Claim~\ref{c3})
and distinct from $K_3$ and from $Z_1$
(since $X$ is $K_1 \cup \claw$ or $\claw^+$ and $\XX$ does not belong to $\OO_4^+$).
Thus, $Y$ is not an induced subgraph of $P_5$ or of $Z_2$,
and the statement follows by Theorem~\ref{alpha3}.

Next, we suppose that $Y$ contains $K_4$ as a subgraph.
If $X$ contains some of
$2K_1 \cup K_2, K_1 \cup P_3, 2K_2$
as an induced subgraph, then we consider family $\FF_6$
and note that the statement is satisfied.
Hence, we can assume that $X$ is $\{ 2K_1 \cup K_2, K_1 \cup P_3, 2K_2 \}$-free,
and so it contains induced $4K_1$ (by Claim~\ref{c3}).
Furthermore, $X$ is a forest and it is $K_{1,4}$-free (by Claim~\ref{c2}).
Consequently, we get that $X$ contains no edge.
In other words, $X$ is $kK_1$ (for some $k \geq 4$).
Thus, $Y$ has at least two non-edges (since $\XX$ does not belong to $\OO_4^+$).
Furthermore, we recall that $Y$ is $K_1 \cup K_3$-free (by Claim~\ref{c3}).
We consider the graphs of $\FF_7$ and observe that they are $\XX$-free.
\end{proofcl}

\begin{claim} 
\label{c5}
In addition, we can assume that $X$ is distinct from $K_1 \cup P_3$
and that $Y$ is $C_4$-free and that $Y$ contains $K_3$ as a subgraph.
\end{claim}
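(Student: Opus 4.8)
The plan is to establish the three assertions of the claim in sequence, each time supposing the negation and either pointing to a suitable family of graphs from Figures~\ref{figFamilies4to10} and~\ref{figFamilies11to13} (exactly as in the proofs of Claims~\ref{c2}, \ref{c3} and~\ref{c4}) or making a direct structural deduction. Throughout I would carry the constraints accumulated so far: by Claims~\ref{c1}--\ref{c4}, $X$ is a $K_{1,4}$-free forest, distinct from $P_4$, $\claw$, $K_1 \cup \claw$ and $\claw^+$, which is either $K_1 \cup P_3$ or contains one of $4K_1, 2K_1 \cup K_2, 2K_2$; and $Y$ is not a forest and is $\{ 4K_1, 2K_1 \cup K_2, K_1 \cup P_3, 2K_2, K_1 \cup K_3, C_5, K_4 \}$-free.

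First I would show that we may assume $X \neq K_1 \cup P_3$. Suppose $X = K_1 \cup P_3$. Since every pair $\{ K_1 \cup P_3, H \}$ with $H$ an induced subgraph of $\overline{K_3 \cup P_4}$ belongs to $\PP_2$ and hence to $\OO_4^+$, the assumption $\XX \notin \OO_4^+$ forces $Y$ to not be an induced subgraph of $\overline{K_3 \cup P_4}$. By Observation~\ref{cK3P4} together with the freeness of $Y$ recorded above, $Y$ must then contain one of $\overline{P_5}, K_{3,3}, K_{2,2,2}$ as an induced subgraph. I would then invoke the family $\FF_8$, observing that its members are $\{ K_1 \cup P_3, \overline{P_5}, K_{3,3}, K_{2,2,2} \}$-free: avoiding all three of $\overline{P_5}, K_{3,3}, K_{2,2,2}$ makes them $Y$-free, and avoiding $K_1 \cup P_3$ makes them $X$-free, so every graph of $\FF_8$ is $\XX$-free. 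As the members of $\FF_8$ are connected, of independence at least $3$, distinct from an odd cycle and not $\omega$-colourable, the family witnesses the conclusion.

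Next I would show that we may assume $Y$ is $C_4$-free, and then deduce that $Y$ contains $K_3$. Having reduced to $X \neq K_1 \cup P_3$, Claim~\ref{c3} now gives that $X$ contains one of $4K_1, 2K_1 \cup K_2, 2K_2$. Supposing $Y$ contains an induced $C_4$, I would point to the family $\FF_9$ of $\{ 4K_1, 2K_1 \cup K_2, 2K_2, C_4 \}$-free graphs: being $\{ 4K_1, 2K_1 \cup K_2, 2K_2 \}$-free makes its members $X$-free (they avoid whichever of the three $X$ contains), and being $C_4$-free makes them $Y$-free, so the family is again $\XX$-free, yielding the conclusion. Finally, assuming $Y$ is $C_4$-free, the last assertion is purely structural: $Y$ is not a forest, so it has a shortest induced cycle $C_g$; since $Y$ is $\{ 2K_2, C_4, C_5 \}$-free and every induced cycle of length at least $6$ contains an induced $2K_2$, the only possibility is $g = 3$, so $Y$ contains $K_3$.

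The routine part is reading off the figures that $\FF_8$ and $\FF_9$ have the stated freeness and lie in $\GG_{c,o,\alpha}$ while not being $\omega$-colourable; the one genuinely structural step is the girth argument giving $K_3 \subseteq Y$. The main obstacle I anticipate is pinning down $\FF_8$ precisely: because the disjunction ``$Y$ contains one of $\overline{P_5}, K_{3,3}, K_{2,2,2}$'' must be defeated by a single family, $\FF_8$ has to avoid all three of these graphs simultaneously while remaining $K_1 \cup P_3$-free and non-$\omega$-colourable, which is the tightest constraint in the argument.
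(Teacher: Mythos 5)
Your overall strategy (contradiction via explicit families for the first two assertions, then a shortest-cycle argument for the third) is exactly the paper's, and your first and third steps are essentially the paper's proof: for $X = K_1 \cup P_3$ the paper deduces, just as you do, that $Y$ contains one of $\overline{P_5}, K_{3,3}, K_{2,2,2}$ via Observation~\ref{cK3P4}, and then cites a family avoiding $K_1 \cup P_3$ and these three graphs (it is $\FF_6$, not $\FF_8$, but guessing the figure labels wrongly is harmless since a family with the required properties does exist); and your girth argument for $K_3$ is the paper's argument verbatim.

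The genuine gap is in the $C_4$ step. You defeat the case ``$Y$ contains induced $C_4$'' with a \emph{single} family of $\{ 4K_1, 2K_1 \cup K_2, 2K_2, C_4 \}$-free graphs lying in $\GG_{c,o,\alpha}$ and not $\omega$-colourable. No such family exists; in fact not even one such graph exists. Suppose $G$ is $\{ 2K_1 \cup K_2, 2K_2, C_4 \}$-free and not $\omega$-colourable. Then $G$ is not perfect, so by Theorem~\ref{tSPGT} it contains an induced odd hole or odd antihole of length at least $5$; holes of length at least $7$ contain induced $2K_2$ and antiholes of length at least $7$ contain induced $C_4$, so $G$ contains an induced $C_5$, say on $C$. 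A short case check using only $2K_2$- and $C_4$-freeness shows that every vertex outside $C$ is adjacent to all of $C$ or to none of it. A vertex adjacent to none of $C$ yields induced $2K_1 \cup K_2$ (that vertex, a vertex of $C$, and the edge of $C$ at distance two from it), so every vertex outside $C$ is adjacent to all of $C$; and then two nonadjacent vertices outside $C$ would form an induced $C_4$ through a nonadjacent pair of $C$, so $V(G) \sm C$ is a clique completely joined to $C$. Hence $\alpha(G) = 2$, contradicting independence at least $3$. This impossibility is precisely why the paper splits into two cases according to which of $4K_1, 2K_1 \cup K_2, 2K_2$ is an induced subgraph of $X$: it uses $\FF_8$ (a $\{ 2K_1 \cup K_2, C_4 \}$-free family, whose members therefore necessarily contain $2K_2$) when $X$ contains $2K_1 \cup K_2$, and $\FF_9$ (a $\{ 4K_1, 2K_2, C_4 \}$-free family, whose members necessarily contain $2K_1 \cup K_2$) otherwise. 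Incidentally, your self-diagnosed ``main obstacle'' --- finding one family avoiding $K_1 \cup P_3$, $\overline{P_5}$, $K_{3,3}$ and $K_{2,2,2}$ simultaneously --- is not an obstacle at all; the unfixable step is the one where you were confident.
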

\begin{proofcl}[Proof of Claim~\ref{c5}]
We suppose that $X$ is $K_1 \cup P_3$.
In particular $Y$ is not an induced subgraph of $\overline{K_3 \cup P_4}$
(since $\XX$ does not belong to $\OO_4^+$).
We recall that $Y$ is 
$\{
4K_1,\allowbreak
2K_1 \cup K_2,\allowbreak
K_1 \cup P_3,\allowbreak
2K_2,\allowbreak
K_1 \cup K_3,\allowbreak
K_4,\allowbreak
C_5
\}$-free (by Claims~\ref{c3} and~\ref{c4}).
Consequently, $Y$ contains some of
$\overline{P_5}, K_{3,3}, K_{2,2,2}$ as an induced subgraph
(by Observation~\ref{cK3P4}),
and we consider family $\FF_6$.
Thus, we can assume that $X$ is distinct from $K_1 \cup P_3$.
In particular, $X$ contains some of
$4K_1, 2K_1 \cup K_2, 2K_2$ as an induced subgraph
(by Claim~\ref{c3}),
and we use this for showing the second part of the claim.

We suppose that $Y$ contains induced $C_4$, and we note that the statement is satisfied
by discussing two cases.
For the case that $X$ contains induced $2K_1 \cup K_2$, we consider family $\FF_8$.
Otherwise, we consider family $\FF_9$.

Hence, we can assume that $Y$ is $C_4$-free.
We recall that $Y$ is $\{ 2K_2, C_5 \}$-free (by Claim~\ref{c3}), 
and so it is, in fact, $\{ C_4, C_5, C_6, \dots \}$-free.
Furthermore, $Y$ is not a forest (by Claim~\ref{c2}),
and thus it contains $K_3$.
\end{proofcl}

\begin{claim} 
\label{c6}
In addition, we can assume that $X$ is
$(k + 2)K_1$ or $k K_1 \cup K_2$ (for some $k \geq 2$)
and that $Y$ is $3K_1$-free.
\end{claim}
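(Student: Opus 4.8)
The plan is to peel off, one sub-case at a time, every possibility for $X$ and $Y$ that is not of the asserted form, each time either producing an infinite $\XX$-free non-$\omega$-colourable family in $\GG_{c,o,\alpha}$ (which finishes the proposition) or narrowing $X$ and $Y$ further. I would first collect the standing hypotheses accumulated in Claims~\ref{c1}--\ref{c5}: $X$ is a forest on at least four vertices, is $K_{1,4}$-free, and is none of $P_4$, $\claw$, $K_1\cup\claw$, $\claw^+$, $K_1\cup P_3$; while $Y$ is not a forest, is $\{4K_1,2K_1\cup K_2,K_1\cup P_3,2K_2,K_1\cup K_3,K_4,C_5\}$-free, is $C_4$-free, and contains $K_3$. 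Since $Y$ is $\{C_4,C_5\}$-free and $2K_2$-free it has no induced cycle of length at least four, so it is chordal with clique number exactly $3$.

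For the assertion about $X$, the key tool is item~(2) of Lemma~\ref{forest}. If $X$ is $\{2K_1\cup P_3,2K_2\}$-free then, being a $K_{1,4}$-free forest distinct from $P_4$ and $\claw$, Lemma~\ref{forest}(2) forces $X$ into the list $K_1\cup P_3$, $K_1\cup P_4$, $K_1\cup\claw$, $\claw^+$, $(k+2)K_1$, $kK_1\cup K_2$; the first, third and fourth are already excluded by Claims~\ref{c4} and~\ref{c5}, so $X$ is $K_1\cup P_4$ or one of the two target graphs. The single leftover shape $K_1\cup P_4$, together with the two cases in which $X$ does contain $2K_2$ or $2K_1\cup P_3$, I would dispatch by exhibiting a suitable triangle-free family among $\FF_{10},\dots,\FF_{13}$: since $Y$ contains $K_3$, any triangle-free family is automatically $Y$-free, so it suffices that its members be $X$-free, connected, of independence at least~$3$, not odd cycles, and not $\omega$-colourable (odd holes supply the last property).

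For the assertion about $Y$, suppose $Y$ contains an induced $3K_1$ on $\{a,b,c\}$. Using that $Y$ is $\{4K_1,2K_1\cup K_2,K_1\cup P_3\}$-free, a three-way check on how many neighbours a vertex $v\notin\{a,b,c\}$ has in $\{a,b,c\}$ shows that every such $v$ is adjacent to all of $a,b,c$. Combined with the facts that $Y$ contains a triangle and is $K_4$-free (a triangle disjoint from $\{a,b,c\}$ would yield $K_4$), this pins $Y$ down to a rigid join-type structure containing $K_{1,1,3}$, which is an induced subgraph of $\overline{K_3\cup P_4}$ by Observation~\ref{cK3P4on5}. I would then finish this case with a triangle-free family as above, now additionally verifying $X$-freeness against the already-reduced forms $(k+2)K_1$ and $kK_1\cup K_2$.

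The main obstacle is bookkeeping rather than any single deep step: I must ensure that the finitely many pre-built families $\FF_{10},\dots,\FF_{13}$ jointly cover every surviving combination of an $X$-shape with a $Y$-shape, and that in each case the chosen family is simultaneously $X$-free and $Y$-free while genuinely lying in $\GG_{c,o,\alpha}$ and failing to be $\omega$-colourable. The delicate points are the tension between $X$-freeness (which, once $X=kK_1\cup K_2$, forbids a large independent set together with a disjoint edge) and the requirement $\alpha\ge 3$, and confirming that none of the surviving pairs $\{X,Y\}$ secretly lands in $\OO_4^+$.
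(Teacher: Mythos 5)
Your reduction of $X$ is correct and is essentially the paper's: if $X$ contains one of $2K_1\cup P_3$, $K_1\cup P_4$, $2K_2$, the triangle-free family $\FF_{10}$ (whose members avoid all three of these graphs) is simultaneously $X$-free and $Y$-free (the latter because $Y$ contains $K_3$ by Claim~\ref{c5}), and otherwise Lemma~\ref{forest}(2) together with Claims~\ref{c2}, \ref{c4} and~\ref{c5} pins $X$ down to $(k+2)K_1$ or $kK_1\cup K_2$.

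The $Y$ part, however, contains a fatal gap. Having already reduced $X$ to $(k+2)K_1$ or $kK_1\cup K_2$, you propose to dispatch the case ``$Y$ contains induced $3K_1$'' with a triangle-free family that is also $X$-free. No infinite family of non-$\omega$-colourable graphs with these properties can exist: by Ramsey's Theorem~\ref{Ramsey}, a triangle-free $(k+2)K_1$-free graph has fewer than $R(k+2,3)$ vertices, and by Lemma~\ref{applyRamsey} every sufficiently large triangle-free $kK_1\cup K_2$-free graph is bipartite, hence $2$-colourable and in particular $\omega$-colourable. (This impossibility is exactly why pairs such as $\{kK_1\cup K_2,K_3\}$ and $\{kK_1,K_\ell\}$ belong to $\AAA_{\PP}\subseteq\OO_4^+$.) So the ``tension'' you flag at the end is not bookkeeping; it kills the proposed finish. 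The paper's route avoids triangle-freeness altogether: from the induced $3K_1$ and the $\{4K_1,2K_1\cup K_2,K_1\cup P_3\}$-freeness of $Y$ (Claim~\ref{c3}; note $Y\neq 3K_1$ by Claim~\ref{c1}) one gets your own observation that every vertex outside the triple is adjacent to all of it, hence $Y$ contains an induced $\claw$ --- and then Theorem~\ref{alpha3}, applied to the pair $\{\claw,X\}$ (legitimate because neither $(k+2)K_1$ nor $kK_1\cup K_2$ is an induced subgraph of $P_5$ or of $Z_2$), supplies infinitely many connected, non-$\omega$-colourable graphs of independence at least $3$, distinct from odd cycles, which are $\claw$-free (hence $Y$-free) and $X$-free. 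These witnesses genuinely contain triangles, which is precisely what lets them evade $X$. Your detour through $K_{1,1,3}$ and Observation~\ref{cK3P4on5} does not repair the argument, since it still terminates in an appeal to a triangle-free family.
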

\begin{proofcl}[Proof of Claim~\ref{c6}]
We recall that $Y$ contains $K_3$ (by Claim~\ref{c5}).
If $X$ contains some of
$2K_1 \cup P_3, K_1 \cup P_4, 2K_2$ as an induced subgraph,
then we consider family $\FF_{10}$.
Hence, we can assume that $X$ is 
$\{ 2K_1 \cup P_3, K_1 \cup P_4, 2K_2 \}$-free.
Furthermore, we recall that $X$ is $K_{1,4}$-free
(by Claim~\ref{c2})
and that $X$ is distinct from
$K_1 \cup P_3$, $K_1 \cup \claw$ and $\claw^+$
(by Claims~\ref{c4} and~\ref{c5}).
Consequently, we get that $X$ is
$(k + 2)K_1$ or $k K_1 \cup K_2$ for some $k \geq 2$
(by item (2) of Lemma~\ref{forest}).

We suppose that $Y$ contains induced $3K_1$.
We note that $Y$ contains at least one additional vertex (by Claim~\ref{c1}),
and hence it contains induced $\claw$ 
(since $Y$ is $\{ 4K_1, 2K_1 \cup K_2, K_1 \cup P_3\}$-free by Claim~\ref{c3}).
We conclude that the statement follows by Theorem~\ref{alpha3}.
\end{proofcl}

\begin{claim} 
\label{c7}
Consequently, we get that $Y$ is $\overline{K_1 \cup P_4}$.
\end{claim}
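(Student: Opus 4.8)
The plan is to pass to the complement $\overline{Y}$, where the constraints accumulated in Claims~\ref{c1}--\ref{c6} become extremely rigid. First I would record everything known about $Y$ at this point: by Claim~\ref{c5} it contains $K_3$, by Claim~\ref{c4} it is $K_4$-free, by Claim~\ref{c6} it is $3K_1$-free, and by Claims~\ref{c3} and~\ref{c5} it is $\{K_1 \cup P_3, 2K_1 \cup K_2, 2K_2, C_4, C_5, K_1 \cup K_3\}$-free. Complementing each forbidden configuration, $\overline{Y}$ is triangle-free (from $3K_1$-free), $\claw$-free (from $K_1 \cup K_3$-free), $2K_2$-free (from $C_4$-free), $C_4$-free (from $2K_2$-free), and $C_5$-free, and in addition $\alpha(\overline{Y}) = 3$: the inclusion $K_3 \subseteq Y$ gives $\alpha(\overline{Y}) \geq 3$, while $Y$ being $K_4$-free gives $\alpha(\overline{Y}) \leq 3$.

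The key observation is that a triangle-free $\claw$-free graph has maximum degree at most $2$ (a vertex of degree $3$ yields either a triangle or a $\claw$), so $\overline{Y}$ is a disjoint union of paths and cycles. Triangle-freeness together with $C_4$-, $C_5$- and $2K_2$-freeness rules out every cycle, since any cycle of length at least $6$ contains an induced $2K_2$; the same $2K_2$-freeness forces all edges of $\overline{Y}$ into a single path on at most four vertices, because a $P_5$ or two independent edges already induce $2K_2$. Hence $\overline{Y} = P_m \cup t K_1$ with $m \in \{2,3,4\}$, and imposing $\alpha(\overline{Y}) = 3$ pins down $t$ in each case, leaving exactly $\overline{Y} \in \{\,2K_1 \cup K_2,\ K_1 \cup P_3,\ K_1 \cup P_4\,\}$, that is, $Y \in \{\,D,\ Z_1,\ \overline{K_1 \cup P_4}\,\}$.

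It then remains to discard $Y = D$ and $Y = Z_1$, which is the only place the standing hypothesis $\XX \notin \OO_4^+$ is used (both graphs are otherwise compatible with all the structural constraints). By Claim~\ref{c6} the partner $X$ is $(k+2)K_1$ or $kK_1 \cup K_2$ with $k \geq 2$. If $Y = Z_1$, then $\{2K_1 \cup K_2, Z_1\} \in \PP_1$, while every other admissible $X$ produces a pair of $\AAA_c$; if $Y = D$, then $\{2K_1 \cup K_2, D\} \in \OO_4$, while the remaining choices of $X$ produce pairs $\{(k+1)K_1, D\}$ or $\{kK_1 \cup K_2, D\}$ of $\AAA_{\Omega}$. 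Since $\PP_1, \AAA_c, \AAA_{\Omega} \subseteq \OO_4^+$, each of these pairs lies in $\OO_4^+$, contradicting the assumption; thus both $D$ and $Z_1$ are excluded and $Y = \overline{K_1 \cup P_4}$. I expect no genuine difficulty here beyond the bookkeeping of matching each listed form of $X$ against the correct sub-collection of $\OO_4^+$; the real content is the complement reduction of the second paragraph.
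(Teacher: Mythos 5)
Your overall strategy---passing to $\overline{Y}$, showing it is a disjoint union of paths whose edges all lie in a single path on at most four vertices, and then discarding the surviving candidates via the standing assumption $\XX \notin \OO_4^+$---is essentially the paper's argument (the paper invokes item (2) of Lemma~\ref{forest} where you re-derive the path structure from the maximum-degree-two observation, which is a perfectly good self-contained substitute). However, your enumeration has a missing case: you assert $\overline{Y} = P_m \cup tK_1$ with $m \in \{2,3,4\}$, but nothing in your argument guarantees that $\overline{Y}$ has an edge at all. The edgeless case, which under $\alpha(\overline{Y}) = 3$ is $\overline{Y} = 3K_1$, i.e.\ $Y = K_3$, survives every structural constraint you list: $K_3$ contains a triangle, is $K_4$-free, $3K_1$-free, and $\{K_1 \cup P_3, 2K_1 \cup K_2, 2K_2, C_4, C_5, K_1 \cup K_3\}$-free, and it is not excluded by Claims~\ref{c1}--\ref{c6} (Claim~\ref{c1} only forbids induced subgraphs of $P_4$, which $K_3$ is not). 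So your claim that the structural analysis leaves \emph{exactly} $Y \in \{D, Z_1, \overline{K_1 \cup P_4}\}$ is false as stated; the correct conclusion is $Y \in \{K_3, D, Z_1, \overline{K_1 \cup P_4}\}$.

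The gap is genuine in the sense that $Y = K_3$ cannot be ruled out structurally and, like $D$ and $Z_1$, must be discarded using the hypothesis $\XX \notin \OO_4^+$; fortunately your third-paragraph mechanism extends verbatim. By Claim~\ref{c6}, $X$ is $(k+2)K_1$ or $kK_1 \cup K_2$ with $k \geq 2$, and then $\{(k+2)K_1, K_3\}$ belongs to $\RR$ (since $k+2 \geq 4$), $\{kK_1 \cup K_2, K_3\}$ with $k \geq 3$ belongs to $\AAA_{\PP}$, and $\{2K_1 \cup K_2, K_3\}$ belongs to $\PP_1$; all of these lie in $\OO_4^+$, a contradiction. (For comparison, the paper excludes $\overline{Y} = 3K_1$ explicitly for exactly this reason, while its wording leaves the case $\overline{Y} = 2K_1 \cup K_2$, i.e.\ $Y = D$, to the assumptions on $\XX$---that is the case you do handle carefully.) With this one case added, your proof is correct and matches the paper's route.
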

\begin{proofcl}[Proof of Claim~\ref{c7}]
We recall that $Y$ is $\{ 3K_1, 2K_2, C_4, C_5\}$-free
(by Claims~\ref{c3}, \ref{c5} and~\ref{c6}),
and we consider the complement of $Y$.
In particular, we get that $\overline{Y}$ is a forest.
Furthermore, $\overline{Y}$ is $\{ 4K_1, 2K_2, \claw\}$-free
(since $Y$ is $\{ K_1 \cup K_3, K_4, C_4 \}$-free
by Claims~\ref{c3}, \ref{c4} and~\ref{c5}).
Also, $\overline{Y}$ is distinct from $P_4$
(since $Y$ is distinct from $P_4$ by Claim~\ref{c1})
and distinct from $3K_1$ and $K_1 \cup P_3$
(since $Y$ is distinct from $K_3$ and $Z_1$ by the assumptions on $\XX$).
We note that we can apply item (2) of Lemma~\ref{forest}
and conclude that $\overline{Y}$ is $K_1 \cup P_4$.
\end{proofcl}

In particular, $X$ is distinct from $2K_1 \cup K_2$
(since $\XX$ does not belong to $\OO_4^+$).
Thus, $X$ contains induced $4K_1$ (by Claim~\ref{c6}).
The desired statement follows by considering family $\FF_7$.

\begin{figure}[t]
    \centering
    \includegraphics[scale=0.7]{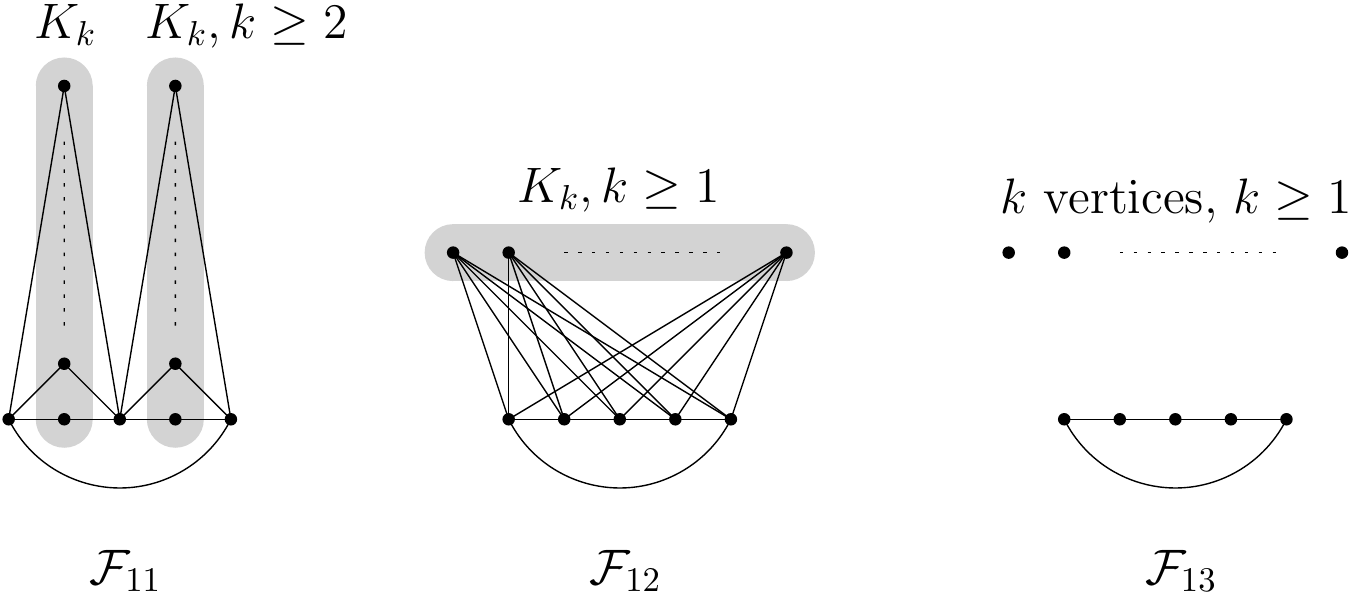}
    \caption{Families $\FF_{11}, \FF_{12}$ and $\FF_{13}$ of graphs.
	(The grey ovals depict complete subgraphs.)    
    The graphs are not $\omega$-colourable.
    }
    \label{figFamilies11to13}
\end{figure}

Next, we show (2).
We note that if $\XX$ does not belong to $\OO_4^+$,
then the statement follows from~(1).
Hence, we can assume that $\XX$ belongs to $\OO_4^+$ but not to $\OO_3^+$,
that is, we assume that at least one of the following is satisfied.
\begin{itemize}
\item
$X$ is $2K_1 \cup K_2$ and $Y$ is $\overline{ K_1 \cup P_4}$.
\item
$X$ is $\claw$ and $Y$ is either $K_1 \cup P_3$ or $2K_2$ or $P_5$ or $K_1 \cup K_3$ or $Z_2$. 
\item
$X$ is $K_1 \cup P_3$ and $Y$ is an induced subgraph of
$\overline{ K_3 \cup P_4}$
such that $Y$ is not an induced subgraph of $P_4$
and is distinct from $K_3$ and $Z_1$ and $D$.
\item
$X$ is $3K_1$ and $Y$ is not an induced subgraph of $P_4$
and is distinct from $K_k$ (for every $k \geq 3$) and from
$\overline{ \ell K_1 \cup K_2}$ (for every $\ell \geq 2$)
and from $Z_1$.
\end{itemize}
We note that $X$ contains $3K_1$ as an induced subgraph,
and we discuss $Y$ in more detail.
We observe that if the statement of the third item is satisfied,
then $Y$ contains some of $C_4, \overline{ K_1 \cup P_4}, \claw$
as an induced subgraph (by Observation~\ref{cK3P4on5}).
Furthermore, we show that if the statement of the last item is satisfied,
then $Y$ contains some of
$3K_1,\allowbreak
2K_2,\allowbreak
K_1 \cup K_3,\allowbreak 
C_4,\allowbreak 
C_5,\allowbreak
\overline{ K_1 \cup P_4},\allowbreak
\overline{ 2K_1 \cup P_3}$
as an induced subgraph.
For the sake of a contradiction, we suppose that $Y$ is
$\{
3K_1,\allowbreak 
2K_2,\allowbreak
K_1 \cup K_3,\allowbreak 
C_4,\allowbreak 
C_5,\allowbreak
\overline{ K_1 \cup P_4},\allowbreak
\overline{ 2K_1 \cup P_3}
\}$-free.
In other words,
$\overline{Y}$ is
$\{
K_3,\allowbreak 
C_4,\allowbreak 
\claw,\allowbreak 
2K_2,\allowbreak 
C_5,\allowbreak
K_1 \cup P_4,\allowbreak
2K_1 \cup P_3
\}$-free,
and so it is a
$\{
\claw,\allowbreak
2K_2,\allowbreak
K_1 \cup P_4,\allowbreak
2K_1 \cup P_3
\}$-free forest.
Furthermore, $\overline{Y}$ is 
not an induced subgraph of $P_4$
and it is distinct from
$k K_1$ (for every $k \geq 3$) and from
$ \ell K_1 \cup K_2$ (for every $\ell \geq 2$)
and from $K_1 \cup P_3$.
A contradiction follows by item (2) of Lemma~\ref{forest}.

We recall that $X$ contains induced $3K_1$,
and we discuss three cases for~$Y$.
For the case that $Y$ contains induced
$\overline{ K_1 \cup P_4}$ or $\overline{ 2K_1 \cup P_3}$,
we consider family~$\FF_{11}$ and observe that the statement is satisfied.
If $Y$ contains $C_5$ as an induced subgraph, then we consider the
family of all graphs whose complement is an odd cycle of length at least~$7$.
Otherwise, we conclude that $Y$ contains some of 
$3K_1, 2K_2, K_1 \cup K_3, C_4$ as an induced subgraph
and consider family~$\FF_{12}$.

We show (3).
Similarly as above,
we can assume that $\XX$ belongs to $\OO_4^+$ but not to $\OO_{2c}^+$.
Hence, $X$ contains $\claw$ as an induced subgraph
and $Y$ contains $K_3$,
and the desired statement follows by
considering the family of all odd cycles of length at least~$7$.

We show (4).
We can assume that $\XX$ belongs to $\OO_4^+$ but not to $\OO_{2}^+$,
and hence $X$ contains induced $\claw$
and $Y$ contains induced $2K_2$ or $K_3$,
and we consider family~$\FF_{13}$.

Lastly, we show the two statements given by (5).
We note that for $\GG_{c}$,
the statement follows from (2) and (3)
(since if $\XX$ does not belong to $\OO_{1}^+$,
then it does not belong to $\OO_{2c}^+$ or $\OO_3^+$).
Similarly, for $\GG_{o}$ the statement follows from (2) and (4).
\end{proof}

\section{Proving the main results}
\label{sProving}
Finally, we put together the statements shown in
Sections~\ref{sPerf}, \ref{sOmega} and~\ref{sFam}
and Theorem~\ref{alpha3},
and we prove
Theorems~\ref{mainNoExceptions} and~\ref{mainFiniteExceptions}.

\begin{proof}[Proof of Theorem~\ref{mainNoExceptions}]
For the sake of efficiency,
we shall first show the `if part' subject to items (1), (2), \dots, (10),
and then the `only if part' subject to items (10), (9), \dots, (1).

For the `if part' of item (1), we need to show that 
if $\XX$ belongs to $\PP_1$,
then every $\XX$-free graph of $\GG_5$ is perfect.
We recall that every $P_4$-free graph is perfect
(for instance, by Theorem~\ref{tSPGT}).
Hence, we can consider a graph of
$\GG_5$ and assume that it is 
$\{ 2K_1 \cup K_2, Z_1 \}$-free or
$\{ K_1 \cup P_3, Z_1 \}$-free or
$\{ K_1 \cup P_3, D \}$-free,
and the perfectness follows by
item (8), (9), (10) of Proposition~\ref{perf}, respectively.

For the `if part' of item (2), we need to show that 
if $\XX$ belongs to $\PP_2$,
then every $\XX$-free graph of $\GG_{\alpha}$ is perfect.
We note that if $\XX$ belongs to $\PP_1$,
then the statement follows by 
the `if part' of (1) shown above.
Also, if $\XX$ contains $3K_1$, then the considered restricted class is empty
and the statement is satisfied trivially.
Hence, we can assume that $\XX$
belongs to $\PP_2$ but neither to $\PP_1$ nor to $\II$.
In particular, one member of $\XX$ is $K_1 \cup P_3$
and the other is an induced subgraph of $\overline{K_3 \cup P_4}$,
and the statement follows by item (2) of Proposition~\ref{perf}.

We show the `if part' of (3).
Similarly as above,
we can assume that $\XX$ belongs to $\PP_{2c}$ but not to $\PP_2$.
Hence,
$\XX$ is either $\{ \claw, 2K_2 \}$ or $\{ \claw, P_5 \}$.
We note that every connected $\XX$-free graph of independence at least $3$
is, in fact, distinct from an odd cycle,
and the perfectness follows by Theorem~\ref{alpha3}.

For the `if part' of (4),
we can assume that $\XX$ belongs to $\PP_3$ but not to $\PP_1$.
Thus, $\XX$ is
either $\{ \claw^+, K_3 \}$ or $\{ \claw^+, Z_1 \}$
or $\{ \claw, K_3 \}$ or $\{ \claw, Z_1 \}$,
and the perfectness follows by item (3) of Proposition~\ref{perf}.

Similarly considering the `if part' of (5),
we can assume that $\XX$ is
either $\{ \claw, K_1 \cup K_3 \}$ or $\{ \claw, Z_2 \}$,
and the statement follows by Theorem~\ref{alpha3}.

Next, we show the `if part' of items (6), \dots, (10).
We note that for most pairs $\XX$,
the $\omega$-colourability follows by the  
`if part' of items (1), \dots, (5).
In~particular considering (6),
we can assume that $\XX$ is $\{ 2K_1 \cup K_2, D \}$.
We note that the statement follows by item (1) of Proposition~\ref{perf}
(since each of the graphs $E_1, \dots, E_4$ depicted in Figure~\ref{figE} is $\omega$-colourable).
The `if part' of (9) follows by the same argument.
Consequently for (7), (8) and (10),
we can assume that $\XX$ is $\{ 2K_1 \cup K_2, \overline{K_1 \cup P_4} \}$.
We conclude that the statement follows by Proposition~\ref{p1}.

In order to show the `only if part' of (10),
we need to show that if
$\XX$ does not belong to $\OO_4$,
then $\GG_{c, o, \alpha}$ contains an
$\XX$-free graph which are not $\omega$-colourable.
We note that for every pair $\XX$ not belonging to $\OO_4^+$,
the statement follows by item (1) of Proposition~\ref{notOmega}.
Hence,
we can assume that $\XX$ is 
either $\{ kK_1, K_{\ell} \}$
(for some $k \geq 4$ and $\ell \geq 3$),
or $\{ kK_1 \cup K_2, K_3 \}$
or $\{ (k+1)K_1, Z_1 \}$
or $\{ kK_1 \cup K_2, Z_1 \}$
or $\{ (k+1) K_1, D \}$
or $\{ k K_1 \cup K_2, D \}$
(for some $k \geq 3$),
or $\{ kK_1, \overline{ \ell K_1 \cup K_2 } \}$
(for some $k \geq 4$ and $\ell \geq 3$),
or $\{ K_1 \cup \claw, K_3 \}$
or $\{ K_1 \cup \claw, Z_1 \}$.
In all cases, the statement follows 
by item (2) of Observation~\ref{anException}.

Consequently for the `only if part' of (9),
we can assume that 
$\XX$ is none of the aforementioned pairs, and so $\XX$ belongs to $\OO_4$.
In addition, we can assume that $\XX$ belongs to $\OO_3^+$
(by item (2) of Proposition~\ref{notOmega}).
Hence, 
$\XX$ is either $\{ 3K_1, K_{k+1} \}$
or $\{ 3K_1, \overline{ kK_1 \cup K_2 } \}$
(for some $k \geq 3$)
and the statement follows by item (1) of Observation~\ref{anException}.

Similarly for (8), (7) and (6),
the statement follows by using item (3), (4) and (5) of Proposition~\ref{notOmega},
respectively.

Regarding the `only if part' of (5), \dots, (1),
we note that for most pairs $\XX$,
the existence of a desired non-perfect graph follows by  
the `only if part' of items (10), \dots, (6).
In particular, we can assume that $\XX$ is either
$\{ 2K_1 \cup K_2, D \}$ or
$\{ 2K_1 \cup K_2, \overline{K_1 \cup P_4} \}$.
We consider graphs $E_1, \dots, E_4$ depicted in Figure~\ref{figE},
and we note that they have the desired properties.
\end{proof}

We show Theorem~\ref{mainFiniteExceptions}.

\begin{proof}[Proof of Theorem~\ref{mainFiniteExceptions}]
We show the `if part' of the statement subject to items (1), (2), \dots, (11),
and then the `only if part' subject to items (11), (10), \dots, (1).

In order to show the `if part' of (1),
we need to show that 
if $\XX$ belongs to $\PP_1^+$,
then the set of all non-perfect $\XX$-free graphs is finite.
We recall that if $\XX$ belongs to $\PP_1$,
then all $\XX$-free graphs (except for $C_5$) are perfect
by item (1) of Theorem~\ref{mainNoExceptions}.
Also, if $\XX$ belongs to $\RR$
or $\XX$ is $\{ 3K_1, K_k \}$ (for some $k \geq 4$),
then there are only finitely many $\XX$-free graphs
by Theorem~\ref{Ramsey}, and so the statement is satisfied trivially.
Hence, we can assume that $\XX$ is
either $\{ 2K_1 \cup K_2, D \}$
or $\{ kK_1 \cup K_2, K_3 \}$
or $\{ 3K_1, \overline{ kK_1 \cup K_2 } \}$
(for some $k \geq 3$),
and the statement follows by items (1), (6) and (7) of Proposition~\ref{perf},
respectively.

For the `if part' of (2),
we can assume that $\XX$ belongs to $\PP_{1c}^+$ but not to $\PP_1^+$.
Hence, $\XX$ is
either $\{ (k+1)K_1, Z_1 \}$
or $\{ kK_1 \cup K_2, Z_1 \}$
(for some $k \geq 3$),
and the statement follows by item (5) of Proposition~\ref{perf}.

For the `if part' of (3) and (4),
we observe that the statement follows similarly as for (1) and (2).

Consequently for (5) and (6),
we can assume that $\XX$ is
either $\{ K_1 \cup \claw, K_3 \}$ or $\{ K_1 \cup \claw, Z_1 \}$.
The statement follows by item (4) of Proposition~\ref{perf}.

For the `if part' of (7),
we need to show that if $\XX$ belongs to $\OO_1^+$,
then the set of all $\XX$-free graphs which are not $\omega$-colourable is finite.
We can assume that $\XX$ does not belong to $\PP_1^+$,
and hence $\XX$ is
either $\{ (k+1) K_1, Z_1 \}$ 
or $\{ k K_1 \cup K_2, Z_1 \}$
or $\{ (k+1) K_1, D \}$ 
or $\{ k K_1 \cup K_2, D \}$
(for some $k \geq 3$),
or
$\{ k K_1, \overline{ \ell K_1 \cup K_2 } \}$
(for some $k \geq 4$ and $\ell \geq 3$),
and the statement follows by Proposition~\ref{omega}.

For (8), \dots, (11),
we note that the statement follows similarly
(in addition, using item (7) of Theorem~\ref{mainNoExceptions}
for the pair 
$\{ 2K_1 \cup K_2, \overline{K_1 \cup P_4} \}$).

Regarding the `only if part' of (11), \dots, (7),
we note that the statement follows by item (1), \dots, (5) of Proposition~\ref{notOmega},
respectively.

We show the `only if part' of (6).
We can assume that $\XX$ belongs to $\OO_4^+$
(otherwise, the statement is satisfied by the `only if part' of (11) discussed above).
Hence, we can assume that $\XX$ is 
either $\{ (k+1) K_1, D \}$
or $\{ k K_1 \cup K_2, D \}$
(for some $k \geq 3$),
or
$\{ k K_1, \overline{ \ell K_1 \cup K_2 } \}$
(for some $k \geq 4$ and $\ell \geq 3$),
or $\{ 2K_1 \cup K_2, \overline{ K_1 \cup P_4 } \}$,
and we note that the statement follows by
items (2) and (3) of Observation~\ref{4K1D}.

We conclude that
for items (5), \dots, (1), the statement follows similarly
(using items (1), (2) and (3) of Observation~\ref{4K1D}).
\end{proof}

\section*{Acknowledgements}
We thank the anonymous referee for their helpful comments.
The work of the first author was partially supported by DMS-EPSRC grant DMS-2120644.
The work of the second and fourth author was supported by projects 17-04611S and 20-09525S of the Czech Science Foundation.
The second author was also supported by the MUNI Award in Science and Humanities of the Grant Agency of Masaryk University.
The work of the third author was supported by the National
Natural Science Foundation of China (Nos.~12171393, 12071370)
and the Natural Science Basic Research Program of Shaanxi (Nos.~2021JM-040, 2020JQ-099).

\small{
}

\end{document}